\theoremstyle{definition}
\newtheorem{theorem}{Theorem}[section]
\newtheorem{proposition}[theorem]{Proposition}
\newtheorem{lemma}[theorem]{Lemma}
\newtheorem*{remark*}{Remark}
\DeclareMathOperator{\re}{Re}
\DeclareMathOperator{\im}{Im}
\DeclareMathOperator{\Mod}{Mod}
\title[Minimal mass blow-up solutions for NLS with a singlar potential]{Minimal mass blow-up solutions for nonlinear Schr\"{o}dinger equations with a Hartree nonlinearity}
\author[N. Matsui]{Naoki Matsui}
\date{\today}
\address[N. Mastui]{Department of Mathematics\\ Tokyo University of Science\\ 1-3 Kagurazaka, Shinjuku-ku, Tokyo 162-8601, Japan}
\email[N. Matsui]{1120703@ed.tus.ac.jp}
\keywords{nonlinear Schr\"{o}dinger equation, critical exponent, critical mass, minimal mass blow-up, blow-up rate, Hartree nonlinearity.}
\subjclass[2010]{35Q55}
\begin{document}
\maketitle

\begin{abstract}
We consider the following nonlinear Schr\"{o}dinger equation with a Hartree nonlinearity:
\[
i\frac{\partial u}{\partial t}+\Delta u+|u|^{\frac{4}{N}}u\pm\left(\frac{1}{|x|^{2\sigma}}\star|u|^2\right)u=0
\]
in $\mathbb{R}^N$. We are interested in the existence and behaviour of minimal mass blow-up solutions. Previous studies have shown the existence of minimal mass blow-up solutions with an inverse power potential and investigated the behaviour of the solution. In this paper, we investigate Hartree nonlinearity, which is a nonlinear term similar to the inverse power-type potential in terms of scaling.
\end{abstract}

\section{Introduction}
We consider the following nonlinear Schr\"{o}dinger equation with a Hartree nonlinearity:
\begin{align}
\label{NLS}
i\frac{\partial u}{\partial t}+\Delta u+|u|^{\frac{4}{N}}u\pm\left(\frac{1}{|x|^{2\sigma}}\star|u|^2\right)u=0,\quad (t,x)\in\mathbb{R}\times\mathbb{R}^N,
\end{align}
where $N\in\mathbb{N}$ and $\sigma\in\mathbb{R}$. It is well known that if
\begin{align}
\label{index1}
0<\sigma<\min\left\{\frac{N}{2},2\right\},
\end{align}
then \eqref{NLS} is locally well-posed in $H^1(\mathbb{R}^N)$ from \cite[Proposition 3.2.5, Proposition 3.2.9, Theorem 3.3.9, and Proposition 4.2.3]{CSSE}. This means that for any initial value $u_0\in H^1(\mathbb{R}^N)$, there exists a unique maximal solution $u\in C((T_*,T^*),H^1(\mathbb{R}^N))\cap C^1((T_*,T^*),H^{-1}(\mathbb{R}^N))$ for \eqref{NLS} with $u(0)=u_0$. Moreover, the mass (i.e., $L^2$-norm) and energy $E$ of the solution $u$  are conserved by the flow, where 
\[
E(u):=\frac{1}{2}\left\|\nabla u\right\|_2^2-\frac{1}{2+\frac{4}{N}}\left\|u\right\|_{2+\frac{4}{N}}^{2+\frac{4}{N}}\mp\frac{1}{4}\int_{\mathbb{R}^N}\left(\frac{1}{|x|^{2\sigma}}\star|u|^2\right)(x)|u(x)|^2dx.
\]
Furthermore, the blow-up alternative holds:
\[
T^*<\infty\quad \mbox{implies}\quad \lim_{t\nearrow T^*}\left\|\nabla u(t)\right\|_2=\infty.
\]

We define $\Sigma^k$ by
\[
\Sigma^k:=\left\{u\in H^k\left(\mathbb{R}^N\right)\ \middle|\ |x|^ku\in L^2\left(\mathbb{R}^N\right)\right\},\quad \|u\|_{\Sigma^k}^2:=\|u\|_{H^k}^2+\||x|^ku\|_2^2.
\]
Particularly, $\Sigma^1$ is called the virial space. If $u_0\in \Sigma^1$, then the solution $u$ for \eqref{NLS} with $u(0)=u_0$ belongs to $C((T_*,T^*),\Sigma^1)$ from \cite[Lemma 6.5.2]{CSSE}.

Moreover, we consider the case
\begin{align}
\label{index2}
0<\sigma<\min\left\{\frac{N}{2},1\right\}.
\end{align}
If $u_0\in H^2(\mathbb{R}^N)$, then the solution $u$ for \eqref{NLS} with $u(0)=u_0$ belongs to $C((T_*,T^*),H^2(\mathbb{R}^N))\cap C^1((T_*,T^*),L^2(\mathbb{R}^N))$ and $|x|\nabla u\in C((T_*,T^*),L^2(\mathbb{R}^N))$ from \cite[Theorem 5.3.1]{CSSE}. Furthermore, if $u_0\in \Sigma^2$, then the solution $u$ for \eqref{NLS} with $u(0)=u_0$ belongs to $C((T_*,T^*),\Sigma^2)\cap C^1((T_*,T^*),L^2(\mathbb{R}^N))$ and $|x|\nabla u\in C((T_*,T^*),L^2(\mathbb{R}^N))$ from the same proof as in \cite[Lemma 6.5.2]{CSSE}.

\subsection{Previous results}
Firstly, we describe the results regarding the mass-critical problem:
\begin{align}
\label{CNLS}
i\frac{\partial u}{\partial t}+\Delta u+|u|^{\frac{4}{N}}u=0,\quad (t,x)\in\mathbb{R}\times\mathbb{R}^N,
\end{align}
In particular, \eqref{NLS} with $\sigma=0$ is reduced to \eqref{CNLS}.

It is well known (\cite{BLGS,KGS,WGS}) that there exists a unique classical solution $Q$ for
\[
-\Delta Q+Q-\left|Q\right|^{\frac{4}{N}}Q=0,\quad Q\in H^1(\mathbb{R}^N),\quad Q>0,\quad Q\mathrm{\ is\ radial},
\]
which is called the ground state. If $\|u\|_2=\|Q\|_2$ ($\|u\|_2<\|Q\|_2$, $\|u\|_2>\|Q\|_2$), we say that $u$ has the \textit{critical mass} (\textit{subcritical mass}, \textit{supercritical mass}, respectively).

We note that $E_{\mathrm{crit}}(Q)=0$, where $E_{\mathrm{crit}}$ is the energy with respect to \eqref{CNLS}. Moreover, the ground state $Q$ attains the best constant in the Gagliardo-Nirenberg inequality
\[
\left\|v\right\|_{2+\frac{4}{N}}^{2+\frac{4}{N}}\leq\left(1+\frac{2}{N}\right)\left(\frac{\left\|v\right\|_2}{\left\|Q\right\|_2}\right)^{\frac{4}{N}}\left\|\nabla v\right\|_2^2\quad\mbox{for }v\in H^1(\mathbb{R}^N).
\]
Therefore, for all $v\in H^1(\mathbb{R}^N)$,
\[
E_{\mathrm{crit}}(v)\geq \frac{1}{2}\left\|\nabla v\right\|_2^2\left(1-\left(\frac{\left\|v\right\|_2}{\left\|Q\right\|_2}\right)^{\frac{4}{N}}\right)
\]
holds. This inequality and the mass and energy conservations imply that any subcritical mass solution for \eqref{CNLS} is global and bounded in $H^1(\mathbb{R}^N)$.

Regarding the critical mass case, we apply the pseudo-conformal transformation
\[
u(t,x)\ \mapsto\ \frac{1}{\left|t\right|^\frac{N}{2}}u\left(-\frac{1}{t},\pm\frac{x}{t}\right)e^{i\frac{\left|x\right|^2}{4t}}
\]
to the solitary wave solution $u(t,x):=Q(x)e^{it}$. Then we obtain
\[
S(t,x):=\frac{1}{\left|t\right|^\frac{N}{2}}Q\left(\frac{x}{t}\right)e^{-\frac{i}{t}}e^{i\frac{\left|x\right|^2}{4t}},
\]
which is also a solution for \eqref{CNLS} and satisfies
\[
\left\|S(t)\right\|_2=\left\|Q\right\|_2,\quad \left\|\nabla S(t)\right\|_2\sim\frac{1}{\left|t\right|}\quad (t\nearrow 0).
\]
Namely, $S$ is a minimal mass blow-up solution for \eqref{CNLS}. Moreover, $S$ is the only finite time blow-up solution for \eqref{CNLS} with critical mass, up to the symmetries of the flow (see \cite{MMMB}).

Regarding the supercritical mass case, there exists a solution $u$ for \eqref{CNLS} such that
\[
\left\|\nabla u(t)\right\|_2\sim\sqrt{\frac{\log\bigl|\log\left|T^*-t\right|\bigr|}{T^*-t}}\quad (t\nearrow T^*)
\]
(see \cite{MRUPB,MRUDB}).

Le Coz, Martel, and Rapha\"{e}l \cite{LMR} based on the methodology of \cite{RSEU} obtains the following results for
\begin{align}
\label{DPNLS}
i\frac{\partial u}{\partial t}+\Delta u+|u|^{\frac{4}{N}}u\pm|u|^{p-1}u=0,\quad (t,x)\in\mathbb{R}\times\mathbb{R}^N.
\end{align}
In \cite{N,NI}, the methods used in \cite{LMR} are improved and in \cite{NDP}, the result of \cite{LMR} are strengthened.

\begin{theorem}[\cite{LMR,NDP}]
\label{theorem:LMR}
Let $1<p<1+\frac{4}{N}$, and $\pm=+$. Then for any energy level $E_0\in\mathbb{R}$, there exist $t_0<0$ and a radially symmetric initial value $u_0\in H^1(\mathbb{R}^N)$ with
\[
\|u_0\|_2=\|Q\|_2,\quad E(u_0)=E_0
\]
such that the corresponding solution $u$ for \eqref{IPNLS} with $\pm=+$ and $u(t_0)=u_0$ blows up at $t=0$. Moreover,
\[
\left\|u(t)-\frac{1}{\lambda(t)^\frac{N}{2}}P\left(t,\frac{x}{\lambda(t)}\right)e^{-i\frac{b(t)}{4}\frac{|x|^2}{\lambda(t)^2}+i\gamma(t)}\right\|_{\Sigma^1}\rightarrow 0\quad (t\nearrow 0)
\]
holds for some blow-up profile $P$ and $C^1$ functions $\lambda:(t_0,0)\rightarrow(0,\infty)$ and $b,\gamma:(t_0,0)\rightarrow\mathbb{R}$ such that
\begin{align*}
P(t)&\rightarrow Q\quad\mbox{in}\ H^1(\mathbb{R}^N),&\lambda(t)&=C_1(p)|t|^{\frac{4}{4+N(p-1)}}\left(1+o(1)\right),\\
b(t)&=C_2(p)|t|^{\frac{4-N(p-1)}{2+N(p-1)}}\left(1+o(1)\right),& \gamma(t)^{-1}&=O\left(|t|^{\frac{4-N(p-1)}{2+N(p-1)}}\right)
\end{align*}
as $t\nearrow 0$.
\end{theorem}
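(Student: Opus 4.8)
\emph{Proof proposal.} I would follow the minimal-mass blow-up construction of Rapha\"{e}l--Szeftel \cite{RSEU}, in the form developed for double-power nonlinearities in \cite{LMR} and refined in \cite{NDP}. The scheme has four ingredients: (i) an approximate self-similar blow-up profile obtained by a formal expansion in the geometric parameters; (ii) a modulated decomposition of the solution near this profile, yielding rigorous ODEs for those parameters; (iii) a mixed energy/virial Lyapunov functional that closes a bootstrap for the remainder in $\Sigma^1$; and (iv) a backward-in-time compactness (shooting) argument producing an actual solution with prescribed energy $E_0$.

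\emph{Profile and modulation laws.} In self-similar variables $y=x/\lambda(t)$, $\tfrac{ds}{dt}=\lambda^{-2}$, write $u(t,x)=\lambda^{-N/2}v(s,x/\lambda)\,e^{-i\frac{b}{4}\frac{|x|^2}{\lambda^2}+i\gamma}$. Substituting into \eqref{DPNLS} with $\pm=+$ and collecting terms, the leading ansatz $v\approx P$ should solve, up to a controllably small error,
\[
\Delta P-P+ib\Big(\tfrac{N}{2}P+y\cdot\nabla P\Big)+|P|^{4/N}P+\lambda^{\theta}|P|^{p-1}P=0,\qquad \theta:=\tfrac{4-N(p-1)}{2}>0,
\]
positivity of $\theta$ being precisely the hypothesis $p<1+\tfrac{4}{N}$. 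One seeks $P=P_{b,\lambda}=Q+O(b)+O(\lambda^{\theta})+\cdots$; at each order the correction is obtained by inverting the linearized operator $L=-\Delta+1-(1+\tfrac{4}{N})Q^{4/N}$ on the orthogonal complement of its (generalized) kernel, and the Fredholm solvability conditions fix a coupled modulation system, schematically $\tfrac{\lambda_s}{\lambda}=-b$, $\gamma_s=1+O(\cdots)$, $b_s+b^2=c_1\lambda^{\theta}+\text{l.o.t.}$ with $c_1=c_1(N,p)$. The expansion produces a non-$L^2$ term $\sim b^2|y|^2Q$, so $P$ is truncated on $|y|\lesssim b^{-1}$, at the cost of an error exponentially small in $b^{-1}$; the subcritical term and its variations are controlled by ordinary Gagliardo--Nirenberg/Sobolev estimates and the exponential decay of $Q$. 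Integrating the modulation system in $t$ gives $P(t)\to Q$ in $H^1$ together with $\lambda(t)=C_1|t|^{\frac{4}{4+N(p-1)}}(1+o(1))$ and the stated asymptotics for $b(t)$ and $\gamma(t)$.

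\emph{Bootstrap and conclusion.} For a solution close to the profile, set $u(t)=\lambda^{-N/2}(P+\varepsilon)(s,\cdot/\lambda)\,e^{-i\frac{b}{4}\frac{|\cdot|^2}{\lambda^2}+i\gamma}$ and impose three orthogonality conditions on $\varepsilon$ adapted to the generalized null space of the linearized flow (phase/mass, scaling, and the $b$-direction); by the implicit function theorem this makes $(\lambda,b,\gamma)$ genuine $C^1$ functions of $t$ and turns the formal identities into rigorous modulation ODEs whose right-hand sides are quadratic in $\varepsilon$ plus the tiny profile remainder. The central step is the almost-monotonicity of a functional of the form
\[
\mathcal{F}(s)\ \sim\ \frac{1}{\lambda(s)^{2}}\Big(\|\nabla\varepsilon\|_2^2+\|\varepsilon\|_2^2-(\text{quadratic interaction terms})\Big)+(\text{localized virial correction}),
\]
which---using the coercivity of $L$ under the orthogonality conditions and the smallness of the perturbations---is essentially non-increasing along the backward flow. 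This closes a bootstrap controlling $\varepsilon$ by a positive power of $\lambda(s)$, which after undoing the rescaling is exactly the claimed $\Sigma^1$-convergence. To realize the prescribed energy one cannot initialize at $t=0$; instead, for $t_n\uparrow0$ one takes data $u_n(t_n)$ with $\varepsilon(t_n)=0$ and parameters tuned so that $\|u_n(t_n)\|_2=\|Q\|_2$, $E(u_n)=E_0$, uses the bootstrap to get a uniform backward existence interval and uniform $\Sigma^1$ bounds, and passes to a limit $u_n\to u$ in $\Sigma^1$ on compact subintervals of $(t_0,0)$. The single unstable direction (associated with $b$) is handled by a Brouwer/topological-degree argument ensuring that the parameters at $t_n$ can be chosen so the trajectory stays in the bootstrap regime. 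Here the local theory, conservation laws, and $\Sigma^1$-persistence recalled in the introduction are exactly what make steps (iii)--(iv) run.

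\emph{Main obstacle.} I expect the delicate points to be, first, the rigorous profile construction: the subcritical term is not scale-invariant, so it is a genuine forcing in the profile equation, and the whole mechanism hinges on the fact that its weight $\lambda^{\theta}$ has exactly the same size as the $b^2$-terms---one must make this balance precise enough that the modulation system is self-consistent and produces the sharp rates (this precision is essentially what \cite{NDP} adds to \cite{LMR}). Second, the coercivity of $\mathcal F$ and the monotonicity argument: controlling the remainder in the weighted (virial-type) norm in the presence of both the deformed profile and the subcritical interaction terms, while keeping the number of unstable directions at exactly one so that the topological argument closes.
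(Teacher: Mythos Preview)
This theorem is not proved in the paper; it is quoted from \cite{LMR,NDP} as background. The paper does, however, prove the analogous Theorem~\ref{theorem:EMBS} for the Hartree perturbation by exactly the \cite{LMR} machinery, so one can compare your sketch against that template.

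Your overall scheme is correct and matches \cite{LMR}, but two specific choices diverge from what \cite{LMR} (and this paper, in Sections~\ref{sec:constprof}--\ref{sec:proof}) actually do. First, you propose truncating the profile on $|y|\lesssim b^{-1}$ to tame a non-$L^2$ tail $\sim b^2|y|^2Q$; this is the older \cite{RSEU} device. In \cite{LMR} and in Proposition~\ref{theorem:constprof} here, the phase $e^{-ib|y|^2/4}$ is factored out of the ansatz from the start, and the profile is a \emph{finite} sum $P=Q+\sum b^{2j}\lambda^{(k+1)\alpha}P_{j,k}^{\pm}$ with each $P_{j,k}^{\pm}\in\mathcal{Y}$ exponentially decaying; the residual $|y|^2Q$ contributions are absorbed by tuning the modulation constants $\beta_{j,k}$ (Proposition~\ref{theorem:Ssol}), so no cutoff is ever introduced and the error $\Psi$ is estimated directly in an exponentially weighted norm. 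Second, you anticipate a single unstable direction handled by a Brouwer/degree argument. In this construction there is none: the modified energy $S=\lambda^{-m}H$ of Lemmas~\ref{Sesti}--\ref{Sdiff} is coercive and monotone enough that the bootstrap~\eqref{bootstrap} closes by a direct continuity argument (Lemmas~\ref{rebootstrap} and~\ref{s0s*s'}), and the backward-limit step is pure weak-$\Sigma^1$ compactness plus strong $L^2$ convergence, with no shooting in the parameters. The energy level $E(u_n)=E_0$ is fixed already when choosing the initial parameters $(\lambda_1,b_1)$ via Lemma~\ref{paraini} and the identity~\eqref{Eesti}, not by a topological selection. Neither deviation is fatal---truncation would work, and an absent unstable direction makes the degree argument vacuous---but both add complications that the actual proof avoids.
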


\begin{theorem}[\cite{LMR}]
\label{theorem:LMR2}
Let $1<p<1+\frac{4}{N}$, and $\pm=-$. If an initial value has critical mass, then the corresponding solution for \eqref{DPNLS} with $u(0)=u_0$ is global and bounded in $H^1(\mathbb{R}^N)$.
\end{theorem}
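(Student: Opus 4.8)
The plan is a scaling argument driven only by the conservation laws and the sharp Gagliardo--Nirenberg inequality; no virial identity is needed. Denote by
\[
E(v)=\frac12\|\nabla v\|_2^2-\frac{N}{2(N+2)}\|v\|_{2+\frac4N}^{2+\frac4N}+\frac{1}{p+1}\|v\|_{p+1}^{p+1}
\]
the conserved energy of \eqref{DPNLS} with $\pm=-$, and set $\mathcal E(v):=\frac12\|\nabla v\|_2^2-\frac{N}{2(N+2)}\|v\|_{2+\frac4N}^{2+\frac4N}$, which is the energy associated with the mass-critical equation \eqref{CNLS}. By the sharp Gagliardo--Nirenberg inequality recalled above, $\mathcal E(v)\ge0$ whenever $\|v\|_2=\|Q\|_2$; consequently every critical-mass solution satisfies $E_0:=E(u_0)\ge\frac1{p+1}\|u_0\|_{p+1}^{p+1}\ge0$.

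Suppose, for contradiction, that the critical-mass solution $u$ with $u(0)=u_0$ is not global and bounded in $H^1$. Since $\|u(t)\|_2\equiv\|u_0\|_2$, this means $\sup_{t\in(T_*,T^*)}\|\nabla u(t)\|_2=\infty$, so there is a sequence $t_n\in(T_*,T^*)$ with $\|\nabla u(t_n)\|_2\to\infty$. Introduce the $L^2$-invariant rescalings $v_n(x):=\lambda_n^{N/2}u(t_n,\lambda_n x)$ with $\lambda_n:=\|\nabla Q\|_2\,\|\nabla u(t_n)\|_2^{-1}\to0$, so that $\|v_n\|_2=\|Q\|_2$ and $\|\nabla v_n\|_2=\|\nabla Q\|_2$ for every $n$. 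Since $\|\nabla\cdot\|_2^2$ and $\|\cdot\|_{2+4/N}^{2+4/N}$ scale as $\lambda^{-2}$ whereas $\|\cdot\|_{p+1}^{p+1}$ scales as $\lambda^{-N(p-1)/2}$, rewriting the conserved energy $E(u(t_n))=E_0$ in terms of $v_n$ and multiplying by $\lambda_n^2$ gives
\[
\lambda_n^2 E_0=\mathcal E(v_n)+\lambda_n^{\,2-\frac{N(p-1)}{2}}\cdot\frac{1}{p+1}\,\|v_n\|_{p+1}^{p+1}.
\]
Because $1<p<1+\frac4N$, the exponent $2-\frac{N(p-1)}2$ lies in $(0,2)$, so $\lambda_n^2 E_0\to0$; as both summands on the right are nonnegative ($\mathcal E(v_n)\ge0$ by the sharp Gagliardo--Nirenberg inequality, $E_0\ge0$ from the first paragraph), we conclude $\mathcal E(v_n)\to0$.

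From $\mathcal E(v_n)\to0$ and $\|\nabla v_n\|_2=\|\nabla Q\|_2$ we get $\|v_n\|_{2+4/N}^{2+4/N}\to\frac{N+2}{N}\|\nabla Q\|_2^2>0$. A Gagliardo--Nirenberg interpolation expressing $\|v_n\|_{2+4/N}$ through $\|v_n\|_{p+1}$ and $\|\nabla v_n\|_2$ --- admissible since $p+1$ lies strictly between $2$ and the mass-critical exponent $2+\frac4N$, which in turn lies below the Sobolev exponent --- then forces $\liminf_n\|v_n\|_{p+1}>0$, because $\|\nabla v_n\|_2$ is constant. (Alternatively, the compactness up to translations and phases of near-optimizers of the Gagliardo--Nirenberg inequality yields $e^{i\theta_n}v_n(\cdot-y_n)\to Q$ in $H^1$, hence in $L^{p+1}$, which is more than enough.) Now discard the nonnegative term $\mathcal E(v_n)$ in the energy identity and divide by $\lambda_n^{\,2-N(p-1)/2}>0$:
\[
\lambda_n^{\frac{N(p-1)}{2}}E_0\ \ge\ \frac{1}{p+1}\,\|v_n\|_{p+1}^{p+1}.
\]
Since $\lambda_n\to0$ and $\frac{N(p-1)}2>0$, the left-hand side tends to $0$, while the right-hand side is bounded below by a positive constant for $n$ large --- a contradiction. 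Hence $u$ is global and bounded in $H^1(\mathbb R^N)$. The only mildly delicate points are the bookkeeping of the two distinct scaling weights in the energy (which is what makes the comparison at scale $\lambda_n^{2-N(p-1)/2}$, rather than $\lambda_n^2$, decisive) and the lower bound $\liminf_n\|v_n\|_{p+1}>0$; once $\mathcal E(v_n)\to0$ is known, both routes to the latter are routine.
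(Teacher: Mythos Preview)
Your proof is correct. The paper does not itself prove this cited result, but its proof of the analogous Theorem~\ref{theorem:NEMBS} (Section~\ref{ProofNEMBS}) follows the approach of \cite{LMR}: after the same rescaling to $v_n$, it shows $\limsup_n E_{\mathrm{crit}}(v_n)\le 0$ and then invokes the concentration--compactness characterization of near-optimizers of the sharp Gagliardo--Nirenberg inequality to obtain $v_n(\cdot-x_n)e^{i\gamma_n}\to Q$ in $H^1$, from which the lower bound on the subcritical term follows by $H^1$ convergence. Your main line is more elementary: you read $\mathcal E(v_n)\to 0$ directly off the scaled energy identity (two nonnegative summands adding to $\lambda_n^2E_0\to 0$), and then obtain $\liminf_n\|v_n\|_{p+1}>0$ from a single Gagliardo--Nirenberg interpolation between $L^{p+1}$ and $\dot H^1$ controlling $\|v_n\|_{2+4/N}$, bypassing any compactness step. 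The concentration--compactness route is the standard one and also yields the stronger qualitative information $v_n\to Q$ up to symmetries (which is what is actually needed in the Hartree case, where translation invariance of the convolution term must be exploited); your direct interpolation is shorter and entirely sufficient for the power nonlinearity, where no translation issue arises. You note the compactness alternative yourself, so the two proofs are close in spirit.
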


The results show that when a small perturbation term is added to the critical problem \eqref{CNLS}, the perturbation term affects the existence of the minimal mass blow-up solution and, if it exists, its behaviour near the blow-up time.

On the basis of this result, \cite{NI} obtains the following results for
\begin{align}
\label{IPNLS}
i\frac{\partial u}{\partial t}+\Delta u+|u|^{\frac{4}{N}}u\pm\frac{1}{|x|^{2\sigma}}u=0,\quad (t,x)\in\mathbb{R}\times\mathbb{R}^N.
\end{align}

\begin{theorem}[\cite{NI}]
\label{theorem:NI1}
Assume $0<\sigma<\min\left\{1,\frac{N}{4}\right\}$. Then for any energy level $E_0\in\mathbb{R}$, there exist $t_0<0$ and a radially symmetric initial value $u_0\in H^1(\mathbb{R}^N)$ with
\[
\|u_0\|_2=\|Q\|_2,\quad E(u_0)=E_0
\]
such that the corresponding solution $u$ for \eqref{IPNLS} with $\pm=+$ and $u(t_0)=u_0$ blows up at $t=0$. Moreover,
\[
\left\|u(t)-\frac{1}{\lambda(t)^\frac{N}{2}}P\left(t,\frac{x}{\lambda(t)}\right)e^{-i\frac{b(t)}{4}\frac{|x|^2}{\lambda(t)^2}+i\gamma(t)}\right\|_{\Sigma^1}\rightarrow 0\quad (t\nearrow 0)
\]
holds for some blow-up profile $P$ and $C^1$ functions $\lambda:(t_0,0)\rightarrow(0,\infty)$ and $b,\gamma:(t_0,0)\rightarrow\mathbb{R}$ such that
\begin{align*}
P(t)&\rightarrow Q\quad\mbox{in}\ H^1(\mathbb{R}^N),&\lambda(t)&=C_1(\sigma)|t|^{\frac{1}{1+\sigma}}\left(1+o(1)\right),\\
b(t)&=C_2(\sigma)|t|^{\frac{1-\sigma}{1+\sigma}}\left(1+o(1)\right),& \gamma(t)^{-1}&=O\left(|t|^{\frac{1-\sigma}{1+\sigma}}\right)
\end{align*}
as $t\nearrow 0$.
\end{theorem}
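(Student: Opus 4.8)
The plan is to adapt the pseudo-conformal minimal-mass blow-up construction of Rapha\"{e}l--Szeftel \cite{RSEU} and Le Coz--Martel--Rapha\"{e}l \cite{LMR}, together with the refinements of \cite{N,NI,NDP}, to the singular potential $|x|^{-2\sigma}$. The first step is to build an approximate blow-up profile. In renormalized variables $u(t,x)=\lambda(t)^{-N/2}v(s,x/\lambda(t))$ with $ds/dt=\lambda(t)^{-2}$, equation \eqref{IPNLS} becomes $i\partial_s v-i\tfrac{\lambda_s}{\lambda}\Lambda v+\Delta v+|v|^{4/N}v\pm\lambda^{2-2\sigma}|y|^{-2\sigma}v=0$, where $\Lambda=\tfrac N2+y\cdot\nabla$. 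Seeking $v=P_b(y)e^{-ib|y|^2/4+i\gamma}$ with $\tfrac{\lambda_s}{\lambda}=-b$, the profile $P_b$ should solve, modulo errors, $\Delta P_b-P_b+ib\Lambda P_b+|P_b|^{4/N}P_b\pm\lambda^{2-2\sigma}|y|^{-2\sigma}P_b=0$; since one expects the relation $b^2\sim\lambda^{2-2\sigma}$, the three perturbative terms are comparable. I would expand $P_b=Q+b^2T_1\pm\lambda^{2-2\sigma}T_2+\cdots$ and solve order by order by inverting the linearized operators $L_\pm$ around $Q$, using the classical algebraic identities $L_-Q=0$, $L_+\Lambda Q=-2Q$, $L_-(|y|^2Q)=-4\Lambda Q$ and their relatives; the potential correction amounts to solving a problem of the type $L_+T_2=\mp|y|^{-2\sigma}Q$, and this is where $\sigma<N/4$ first enters, since it guarantees $|y|^{-2\sigma}Q\in L^2$ and hence $T_2\in\Sigma^1$.

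Next, given a solution $u$ of \eqref{IPNLS} with $\pm=+$, I would decompose it in renormalized variables as the approximate profile plus a remainder $\varepsilon$, and fix the three parameters $\lambda,b,\gamma$ by orthogonality conditions on $\varepsilon$ (relative to suitable generators built from $Q$, $\Lambda Q$ and $|y|^2Q$) chosen so that the linearized energy is coercive on the orthogonal complement. The modulation system is then, to leading order, $\tfrac{\lambda_s}{\lambda}=-b$, $b_s=-b^2+c_\sigma\lambda^{2-2\sigma}$, $\gamma_s=1$, with errors controlled by $\|\varepsilon\|$ and the profile error. Integrating this ODE system produces exactly $\lambda(t)=C_1(\sigma)|t|^{1/(1+\sigma)}(1+o(1))$, $b(t)=C_2(\sigma)|t|^{(1-\sigma)/(1+\sigma)}(1+o(1))$ and $\gamma(t)^{-1}=O(|t|^{(1-\sigma)/(1+\sigma)})$ as $t\nearrow0$; the hypothesis $\sigma<1$ is precisely what guarantees that the rescaled time $s$ tends to $+\infty$ as $t\nearrow0$, so that $b\to0$ and the profile dominates.

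The technical heart is to control $\|\varepsilon(s)\|_{\Sigma^1}$ (and, if necessary, an auxiliary $\Sigma^2$-type quantity) by constructing a mixed energy--virial Lyapunov functional $\mathcal F$ — a $b$-modulated, localized version of the energy plus a Morawetz/virial term — and proving a differential inequality of the form $\tfrac{d}{ds}\mathcal F\lesssim b\,\mathcal F+(\text{profile error})$, the coercivity of $\mathcal F$ coming from the orthogonality conditions. The potential contributes terms such as $\lambda^{2-2\sigma}\int|y|^{-2\sigma}|\varepsilon|^2$, which I would dominate by Hardy's inequality; the smallness of these contributions relative to the driving terms is exactly what forces $\sigma<\min\{1,N/4\}$. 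Combined with a sharp estimate showing that the profile error is $o$ of the leading terms, this closes a bootstrap on each interval $[t_n,0)$ with constants uniform in $n$.

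Finally, I would solve \eqref{IPNLS} backward from a sequence $t_n\nearrow0$ with data drawn from a one-parameter family of approximate profiles at $t_n$, use the uniform $\Sigma^1$ bootstrap bounds, local well-posedness and the compact embedding $\Sigma^1\hookrightarrow L^2$ to extract a limiting solution $u$ on a fixed interval $(t_0,0)$, and check that it has critical mass, blows up at $t=0$ with the stated rates, and converges to the profile in $\Sigma^1$; the free parameter is tuned so that $E(u(t_0))=E_0$, which is possible precisely because $E_{\mathrm{crit}}(Q)=0$, so the leading order of the energy does not obstruct prescribing it. I expect the main obstacle to be the interaction between the singular potential and the linear theory: because $|y|^{-2\sigma}$ is singular at the origin, the profile corrections have only limited regularity and decay, so the orthogonality conditions and the weights in $\mathcal F$ must be arranged so that every occurrence of $|y|^{-2\sigma}$ — in the profile equation, the modulation equations and the Lyapunov estimate — is absorbed by Hardy-type inequalities at the correct powers of $\lambda$ and $b$; carrying out this bookkeeping cleanly is the delicate part, and it is what determines the admissible range of $\sigma$.
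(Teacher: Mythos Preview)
Note first that Theorem~\ref{theorem:NI1} is \emph{not} proved in this paper: it is quoted from \cite{NI} as a previous result, so there is no proof here to compare against directly. What the paper does prove is the analogous Theorem~\ref{theorem:EMBS} for the Hartree nonlinearity, following exactly the template of \cite{LMR,NI} and repeatedly deferring details to those references. Judged against that template, your outline is correct and matches the method: profile construction by expanding $P=Q+\sum b^{2j}\lambda^{(k+1)\alpha}P_{j,k}^\pm$ and inverting $L_\pm$ recursively (Proposition~\ref{theorem:constprof} and Proposition~\ref{theorem:Ssol}); modulation via three orthogonality conditions (Lemma~\ref{decomposition}); the approximate law $\lambda_s/\lambda=-b$, $b_s=-b^2+\beta\lambda^\alpha$ (Section~\ref{sec:apppara}); a modified-energy Lyapunov functional $S=\lambda^{-m}H$ with a differential inequality (Lemmas~\ref{Sesti}--\ref{Sdiff}); bootstrap closure (Lemma~\ref{rebootstrap}); and compactness from backward-in-time data (Section~\ref{sec:proof}).

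Two small discrepancies worth flagging. First, the Lyapunov functional here (and in \cite{NI}) is not a ``mixed energy--virial'' quantity in the Morawetz sense: it is the linearized energy $H(s,\varepsilon)$ around $P$, augmented only by the weighted term $b^2\||y|\varepsilon\|_2^2$, and then multiplied by $\lambda^{-m}$; the monotonicity comes from the $\Lambda\varepsilon$ term in the $\varepsilon$-equation via Lemma~\ref{Lambda}, not from a separate virial identity. Second, the energy level $E_0$ is not achieved by tuning a free parameter after the fact, but is built in from the start via Lemma~\ref{paraini}, which chooses $(\lambda_1,b_1)$ so that $E(P_{\lambda_1,b_1,0})=E_0$ exactly; conservation and the error estimates then propagate this to the limit. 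Neither point is a gap in your strategy, only a difference in packaging.
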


In previous results \cite{BCD,C,CN,N}, the blow-up rate of the minimal mass blow-up solutions for \eqref{CNLS} with smooth potentials do not change. However, Theorem \ref{theorem:NI1} shows that the blow-up rate of the minimal mass blow solution changes when the potential has a singularity.

On the other hand, the following result holds in \eqref{IPNLS} with $\pm=-$.

\begin{theorem}[\cite{NI}]
\label{theorem:NI2}
Assume $N\geq 2$ and $0<\sigma<\min\left\{1,\frac{N}{2}\right\}$. If $u_0\in H^1_\mathrm{rad}(\mathbb{R}^N)$ such that $\|u_0\|_2=\|Q\|_2$, the corresponding solution $u$ for \eqref{IPNLS} with $\pm=-$ and $u(0)=u_0$ is global and bounded in $H^1(\mathbb{R}^N)$.
\end{theorem}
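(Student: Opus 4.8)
The plan is to argue by contradiction via the blow-up alternative and a concentration-compactness analysis that exploits radial symmetry. Since the equation is rotation invariant and the mass is conserved, $u(t)$ stays radial and $\|u(t)\|_2=\|Q\|_2$ throughout the maximal existence interval; hence it suffices to rule out the existence of a sequence of times $t_n$ with $\|\nabla u(t_n)\|_2\to\infty$. Given such a sequence, set $\lambda_n:=\|\nabla Q\|_2/\|\nabla u(t_n)\|_2\to 0$ and $v_n(x):=\lambda_n^{N/2}u(t_n,\lambda_n x)$, so that $v_n$ is radial with $\|v_n\|_2=\|Q\|_2$ and $\|\nabla v_n\|_2=\|\nabla Q\|_2$. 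With $E_{\mathrm{crit}}(v):=\tfrac12\|\nabla v\|_2^2-\tfrac{N}{2N+4}\|v\|_{2+4/N}^{2+4/N}$, the conservation of energy $E(u(t_n))=E(u_0)=:E_0$ rescales (for the defocusing sign $\pm=-$) to
\[
\lambda_n^2 E_0 \;=\; E_{\mathrm{crit}}(v_n)\;+\;\frac12\,\lambda_n^{2-2\sigma}\int_{\mathbb{R}^N}\frac{|v_n(x)|^2}{|x|^{2\sigma}}\,dx .
\]

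The first observations are cheap. By the sharp Gagliardo-Nirenberg inequality at the critical mass $\|v_n\|_2=\|Q\|_2$ one has $E_{\mathrm{crit}}(v_n)\ge 0$, and since $v_n\not\equiv 0$ the integral on the right is strictly positive; therefore the identity forces $E_0>0$, and if $E_0\le 0$ we are already done. Assume now $E_0>0$. Because $\sigma<1$, a H\"older/Hardy-type bound gives $\int_{\mathbb{R}^N}|f|^2|x|^{-2\sigma}\,dx\le C\|f\|_{H^1}^2$, so the integral in the identity is bounded uniformly in $n$; together with $\lambda_n^{2-2\sigma}\to 0$ and $\lambda_n^2 E_0\to 0$ this yields $E_{\mathrm{crit}}(v_n)\to 0$, hence $\|v_n\|_{2+4/N}^{2+4/N}\to\tfrac{N+2}{N}\|\nabla Q\|_2^2>0$.

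Thus $(v_n)$ does not vanish in $L^{2+4/N}$, so by the standard Lions non-vanishing argument there are $y_n\in\mathbb{R}^N$ and $\delta>0$ with $\int_{B(y_n,1)}|v_n|^2\,dx\ge\delta$ for $n$ large, and after extracting a subsequence $v_n(\cdot+y_n)\rightharpoonup V$ weakly in $H^1$ with $V\ne 0$. Here radial symmetry and $N\ge 2$ enter decisively: by rotating $B(y_n,1)$ one produces $\gtrsim|y_n|^{N-1}$ pairwise disjoint balls each carrying $L^2$-mass $\ge\delta$ of the radial function $v_n$, so $\|Q\|_2^2=\|v_n\|_2^2\gtrsim|y_n|^{N-1}\delta$ and $(y_n)$ must be bounded. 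Passing to a further subsequence, $y_n\to y_\infty$ and $v_n\rightharpoonup V(\cdot-y_\infty)=:\widetilde V\ne 0$ in $H^1$. Choose $0<R_1<R_2<\infty$ with $c_1:=\int_{R_1<|x|<R_2}|\widetilde V|^2\,dx>0$; by the Rellich theorem $\int_{R_1<|x|<R_2}|v_n|^2\,dx\to c_1$, whence for $n$ large
\[
\int_{\mathbb{R}^N}\frac{|v_n(x)|^2}{|x|^{2\sigma}}\,dx\;\ge\;\frac{1}{R_2^{2\sigma}}\int_{R_1<|x|<R_2}|v_n|^2\,dx\;\ge\;\frac{c_1}{2R_2^{2\sigma}}\;=:\;c_2>0 .
\]
Inserting this into the rescaled energy identity, using $E_{\mathrm{crit}}(v_n)\ge 0$ and dividing by $\lambda_n^2$, gives $E_0\ge\tfrac12\lambda_n^{-2\sigma}c_2\to\infty$ because $\sigma>0$ and $\lambda_n\to 0$ --- a contradiction. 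Hence no blow-up sequence $t_n$ exists, and $u$ is global and bounded in $H^1$.

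I expect the delicate point to be the non-vanishing plus localization step: one must prevent the rescaled mass of $v_n$ from escaping to spatial infinity, where the potential $|x|^{-2\sigma}$ is negligible and the argument would collapse. This is exactly where radiality and $N\ge 2$ are used, and it is consistent with the fact recorded in Theorem~\ref{theorem:NI1} that minimal-mass blow-up does occur for the focusing sign; in the present (defocusing) case the only role of the sign is to make $E_{\mathrm{crit}}(v_n)\ge 0$ and $\int|v_n|^2|x|^{-2\sigma}>0$, which together render the energy identity impossible as $\lambda_n\to 0$.
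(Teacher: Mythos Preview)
Your argument is correct. Note, however, that the paper does not itself prove Theorem~\ref{theorem:NI2}; it is quoted from \cite{NI}. The closest proof in the paper is that of the analogous Theorem~\ref{theorem:NEMBS} (Section~\ref{ProofNEMBS}) for the Hartree perturbation, and that is the natural point of comparison.

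The overall strategy matches: rescale to $v_n=\lambda_n^{N/2}u(t_n,\lambda_n\,\cdot)$, use $E_{\mathrm{crit}}(v_n)\ge 0$ at critical mass, and derive a contradiction from the perturbative term in the energy scaling like $\lambda_n^{-2\sigma}$. The difference lies in the compactness step. The paper (for Theorem~\ref{theorem:NEMBS}) invokes the full variational characterisation of minimal-mass bubbles to obtain \emph{strong} convergence $v_n(\cdot-x_n)e^{i\gamma_n}\to Q$ in $H^1$; in the Hartree case the translation $x_n$ drops out by translation invariance of the convolution energy, while for the inverse-power potential the analogous argument in \cite{NI} would use radiality to force $x_n\to 0$ and then pass to the limit $\int|v_n|^2|x|^{-2\sigma}\,dx\to\int Q^2|x|^{-2\sigma}\,dx>0$. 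You instead work with only a \emph{nontrivial weak limit}: Lions non-vanishing produces concentration centres $y_n$, the radial packing argument (valid exactly when $N\ge 2$) bounds $|y_n|$, and Rellich compactness on an annulus yields a uniform positive lower bound for $\int|v_n|^2|x|^{-2\sigma}\,dx$. This is slightly more elementary --- it avoids the sharp profile identification --- at the cost of not pinning down the limit as $Q$, which is not needed here. Your closing remark on why radiality and $N\ge2$ are indispensable is exactly the point the paper emphasises in its discussion following Theorem~\ref{theorem:NEMBS}.
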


Moreover, \cite{NIL} obtain the following result for
\begin{align}
\label{IPLNLS}
i\frac{\partial u}{\partial t}+\Delta u+|u|^{\frac{4}{N}}u\pm\frac{1}{|x|^{2\sigma}}\log|x|u=0,\quad (t,x)\in\mathbb{R}\times\mathbb{R}^N.
\end{align}

\begin{theorem}[\cite{NIL}]
\label{theorem:NIL1}
Assume  $0<\sigma<\min\left\{1,\frac{N}{4}\right\}$. Then for any energy level $E_0\in\mathbb{R}$, there exist $t_0<0$ and a radially symmetric initial value $u_0\in H^1(\mathbb{R}^N)$ with
\[
\|u_0\|_2=\|Q\|_2,\quad E(u_0)=E_0
\]
such that the corresponding solution $u$ for \eqref{NLS} with $\pm=-$ and $u(t_0)=u_0$ blows up at $t=0$. Moreover,
\[
\left\|u(t)-\frac{1}{\lambda(t)^\frac{N}{2}}P\left(t,\frac{x}{\lambda(t)}\right)e^{-i\frac{b(t)}{4}\frac{|x|^2}{\lambda(t)^2}+i\gamma(t)}\right\|_{\Sigma^1}\rightarrow 0\quad (t\nearrow 0)
\]
holds for some blow-up profile $P$ and $C^1$ functions $\lambda:(t_0,0)\rightarrow(0,\infty)$ and $b,\gamma:(t_0,0)\rightarrow\mathbb{R}$ such that
\begin{align*}
P(t)&\rightarrow Q\quad\mbox{in}\ H^1(\mathbb{R}^N),&\lambda(t)&\approx|t|^{\frac{1}{1+\sigma}}\left|\log|t|\right|^{\frac{1}{2+2\sigma}},\\
b(t)&\approx|t|^{\frac{1-\sigma}{1+\sigma}}|\log|t||^{\frac{1}{1+\sigma}},& \gamma(t)^{-1}&=O\left(|t|^{\frac{1-\sigma}{1+\sigma}}\right)
\end{align*}
as $t\nearrow 0$.
\end{theorem}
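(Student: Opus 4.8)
The plan is to follow the minimal-mass blow-up construction of Le Coz--Martel--Rapha\"el \cite{LMR}, in the refined form of \cite{N,NI}, adapting it to \eqref{IPLNLS} and isolating the new effects produced by the factor $\log|x|$. First, pass to the renormalised variables $y = x/\lambda(t)$ and the rescaled time $s$ with $ds/dt = \lambda(t)^{-2}$, and seek solutions of \eqref{IPLNLS} with $\pm=-$ of the form
\[
u(t,x) = \frac{1}{\lambda(t)^{N/2}}\bigl(P + \varepsilon\bigr)\!\left(s,\frac{x}{\lambda(t)}\right)e^{-i\frac{b(t)}{4}\frac{|x|^2}{\lambda(t)^2} + i\gamma(t)},
\]
where $b,\gamma$ are modulation parameters, $P = P(\lambda(t),b(t))$ is an approximate blow-up profile (the profile $P$ of the statement, with $P\to Q$ as $t\nearrow0$), and $\varepsilon$ is a small remainder. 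After this rescaling and multiplication of the equation by $\lambda^2$, the potential term $-|x|^{-2\sigma}\log|x|\,u$ becomes $\lambda^{2(1-\sigma)}|y|^{-2\sigma}\bigl(\log\tfrac1\lambda - \log|y|\bigr)$ times the renormalised unknown; its leading part $\lambda^{2(1-\sigma)}\bigl(\log\tfrac1\lambda\bigr)|y|^{-2\sigma}$ carries, in front of a fixed $b$- and $s$-independent radial weight, the slowly varying coefficient $\log\tfrac1\lambda\to+\infty$ (the $\log|y|$ term being of relative size $O(1)$). Since $\sigma<1$, $\lambda^{2(1-\sigma)}\to0$ as $\lambda\to0$, so the potential is a genuine perturbation; build $P$ by the usual formal expansion $P = Q + b^2 P_2 + \cdots$ supplemented by potential-induced corrections at orders $\lambda^{2(1-\sigma)}\log\tfrac1\lambda$ and $\lambda^{2(1-\sigma)}$, each obtained by inverting the linearised operators $L_\pm$ at $Q$ against the lower-order terms, and truncate at a scale $|y|\sim b^{-1/2}$; the non-smoothness of $|x|^{-2\sigma}$ at the origin is harmless because $2\sigma<N$. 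Record the profile equation with its residual $\Psi$, which is made as small as needed by keeping enough terms.

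Inserting the ansatz into \eqref{IPLNLS} and imposing the orthogonality conditions that annihilate the (generalised) kernel of the linearised operator $\mathcal L = (L_+,L_-)$ around $Q$ produces a finite-dimensional system for $(\lambda,b,\gamma)$, roughly
\[
\frac{\lambda_s}{\lambda} + b \approx 0, \qquad b_s + b^2 \approx c_\sigma\,\lambda^{2(1-\sigma)}\log\tfrac1\lambda, \qquad \gamma_s \approx 1,
\]
with $c_\sigma>0$, the forcing in the $b$-equation coming from the action of the rescaled potential on the profile. Analysing this coupled system with $\log\tfrac1\lambda$ treated adiabatically --- frozen on each dyadic time interval, as in the plain inverse-power case of Theorem \ref{theorem:NI1} but with this slowly varying coefficient now present --- and returning to the original time via $dt = \lambda^2\,ds$, one gets $\log\tfrac1\lambda\sim\tfrac{1}{1+\sigma}|\log|t||$ and hence
\[
\lambda(t)\approx|t|^{\frac{1}{1+\sigma}}\left|\log|t|\right|^{\frac{1}{2+2\sigma}}, \qquad b(t)\approx|t|^{\frac{1-\sigma}{1+\sigma}}\left|\log|t|\right|^{\frac{1}{1+\sigma}}, \qquad \gamma(t)^{-1} = O\!\left(|t|^{\frac{1-\sigma}{1+\sigma}}\right),
\]
so the extra $|\log|t||$-powers are precisely the footprint of $\log|x|$ in the potential.

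Next, derive the Schr\"odinger-type equation satisfied by $\varepsilon$ and run a bootstrap on a suitably weighted $H^1$-type (virial) norm of $\varepsilon$ together with the modulation parameters; the engine is a mixed energy--virial functional $\mathcal F(\varepsilon)$, coercive on the orthogonal complement of the kernel of $\mathcal L$ and almost monotone in $s$ up to (i) the profile residual $\Psi$, absorbed by keeping enough terms in $P$, and (ii) the pairings of the rescaled potential with $P$ and with $\varepsilon$. Here the hypothesis $\sigma<\min\{1,N/4\}$ is exactly what makes (ii) subcritical: $|x|^{-2\sigma}\in L^2_{\mathrm{loc}}(\mathbb{R}^N)$, so Hardy- and Gagliardo--Nirenberg-type inequalities bound these pairings by a small power of $b$, while the $\log|x|$ factor costs only a power of $|\log\lambda|$, which is beaten by that gain. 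Closing the bootstrap gives the stated decomposition with remainder tending to $0$ in $\Sigma^1$. Finally, solve \eqref{IPLNLS} backward from a sequence of times $t_n\nearrow0$ with data on the approximate-solution manifold, selecting the one free scalar $b(t_n)$ by a Brouwer-type shooting argument so that the bootstrap bounds persist on all of $(t_0,t_n]$; compactness and the uniform bounds give, in the limit $n\to\infty$, a solution $u$ on $(t_0,0)$ with $\|\nabla u(t)\|_2\sim\lambda(t)^{-1}\to\infty$ as $t\nearrow0$. Mass conservation gives $\|u_0\|_2=\|Q\|_2$, and a free parameter in the construction governing the energy (exactly as in Theorem \ref{theorem:NI1}) is tuned so that $E(u_0)=E_0$.

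The main obstacle is the coupling between the singularity $|x|^{-2\sigma}$ and the \emph{unbounded} factor $\log|x|$: one must rigorously justify treating $\log\tfrac1\lambda$ adiabatically --- freezing it on each dyadic time interval in both the construction of $P$ and the monotonicity formula for $\mathcal F$ --- so that it affects the final rates only through $\log\tfrac1\lambda\sim\tfrac{1}{1+\sigma}|\log|t||$ and never spoils the almost-monotonicity or the coercivity of $\mathcal F$. The rest is the (lengthy but essentially routine) adaptation of \cite{NI}: handling the non-smoothness of the potential at the origin throughout the profile hierarchy, and the systematic bookkeeping of the extra $|\log\lambda|$-losses in every error estimate.
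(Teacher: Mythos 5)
First, note that this paper does not prove Theorem \ref{theorem:NIL1}: it is quoted from \cite{NIL}, and the only in-paper analogue is the proof of Theorem \ref{theorem:EMBS} carried out in Sections \ref{sec:constprof}--\ref{sec:proof}. Your outline is the same Le Coz--Martel--Rapha\"el / Rapha\"el--Szeftel scheme used there and in \cite{NI,NIL}: approximate profile obtained by inverting $L_\pm$ order by order, decomposition with the orthogonality conditions \eqref{orthocondi}, approximate law $\lambda_s/\lambda+b\approx0$, $b_s+b^2\approx c_\sigma\lambda^{2(1-\sigma)}\log\tfrac1\lambda$, a coercive and almost-monotone mixed energy--virial functional to close a bootstrap, and compactness from a sequence of prepared data at times $t_n\nearrow0$, with the energy adjusted through the choice of $(\lambda_1,b_1)$ as in Lemma \ref{paraini}. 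Your formal integration of the law is also consistent: with $\alpha=2-2\sigma$ it gives $\lambda(s)\approx s^{-2/\alpha}(\log s)^{-1/\alpha}$, $b\approx 2/(\alpha s)$, and after $dt=\lambda^2ds$ exactly the stated $\lambda(t)\approx|t|^{\frac{1}{1+\sigma}}|\log|t||^{\frac{1}{2+2\sigma}}$ and $b(t)\approx|t|^{\frac{1-\sigma}{1+\sigma}}|\log|t||^{\frac{1}{1+\sigma}}$.

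The substantive problem is that the one genuinely new ingredient of this theorem relative to Theorem \ref{theorem:NI1} --- the unbounded, slowly varying factor $\log\tfrac1\lambda$ --- is precisely what you do not prove. You propose to ``freeze'' $\log\tfrac1\lambda$ adiabatically on dyadic time intervals, both in the construction of $P$ and in the monotonicity of the functional, and you yourself label the justification of this as the main obstacle. That is not how the actual argument goes, and as written it is a gap rather than a proof: the established route (visible in miniature in Proposition \ref{theorem:constprof} here) is to build the $\lambda$-dependence, including the logarithm, directly into the profile hierarchy --- expanding in $b^2$ and $\lambda^\alpha$ with $\lambda^\alpha\log\tfrac1\lambda$ entering either as an additional expansion parameter or through $\lambda$-dependent coefficients, each order solved by a system of type $(S_{j,k})$ with a solvability constant $\beta_{j,k}$ --- so that the forcing $\theta(s)$ in $b_s+b^2-\theta=0$ automatically carries the full $\lambda^\alpha\log\tfrac1\lambda$ term, and the modulation estimates (Lemma \ref{Modesti}) and the almost-monotonicity of $S$ (Lemma \ref{Sdiff}) are run continuously in $s$ with no freezing; the logarithm then only changes the integration of the law (the analogue of Lemma \ref{paraini} and of the conversion $s\mapsto t$ in Lemmas \ref{interval}--\ref{uniesti}), which is where the extra $|\log|t||$ powers and the weaker $\approx$ (rather than $=C(1+o(1))$) conclusion come from. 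If you insist on dyadic freezing you must control the commutator-type errors it creates in $\Psi$, in $\theta$, and in $\frac{d}{ds}S$ uniformly over infinitely many intervals, and nothing in your sketch does this; the rest of your outline (decomposition, bootstrap, compactness, mass and energy adjustment) is routine and matches the paper's scheme.
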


Comparing Theorem \ref{theorem:NIL1} with Theorem \ref{theorem:NI1}, we see that the strength of the singularity of the potential corresponds to the magnitude of the blow-up rate.

\begin{theorem}[\cite{NIL}]
\label{theorem:NIL2}
Assume $N\geq 2$ and \eqref{index1}. If $u_0\in H^1_\mathrm{rad}(\mathbb{R}^N)$ such that $\|u_0\|_2=\|Q\|_2$, the corresponding solution $u$ for \eqref{NLS} with $\pm=+$ and $u(0)=u_0$ is global and bounded in $H^1(\mathbb{R}^N)$.
\end{theorem}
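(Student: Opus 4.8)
The plan is to argue by contradiction and reduce the statement to the variational characterisation of the ground state together with conservation of mass and energy; no virial identity is used, which matters because $u_0$ is only assumed to lie in $H^1_{\mathrm{rad}}$, not in $\Sigma^1$. By the time-reversal symmetry of \eqref{NLS} and the blow-up alternative it is enough to show that $\sup_t\|\nabla u(t)\|_2<\infty$ on the maximal interval of existence; mass conservation then gives boundedness in $H^1$, and the blow-up alternative gives global existence. So assume, for contradiction, that there is a sequence $t_n$ in the maximal interval with $\|\nabla u(t_n)\|_2\to\infty$. Write $E_{\mathrm{crit}}$ for the energy of \eqref{CNLS} and set
\[
\mathcal P(v):=\frac14\int_{\mathbb R^N}\Big(\frac{1}{|x|^{2\sigma}}\star|v|^2\Big)(x)\,|v(x)|^2\,dx\ \ge 0 ,
\]
so that — this is where the sign hypothesis enters — $E(u(t))=E_{\mathrm{crit}}(u(t))+\mathcal P(u(t))$. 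Then two facts pin $E_{\mathrm{crit}}$ down along $t_n$: the estimate for $E_{\mathrm{crit}}$ recorded in the introduction, together with $\|u(t)\|_2=\|Q\|_2$, gives $E_{\mathrm{crit}}(u(t))\ge 0$ for every $t$; and $E_{\mathrm{crit}}(u(t))=E(u_0)-\mathcal P(u(t))\le E(u_0)$. Hence $0\le E_{\mathrm{crit}}(u(t_n))\le E(u_0)$, so
\[
\frac{E_{\mathrm{crit}}(u(t_n))}{\|\nabla u(t_n)\|_2^2}\longrightarrow 0 .
\]

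The next step is to apply the concentration–compactness description of near-ground-state configurations (Weinstein; Hmidi–Keraani): a sequence $v_n$ with $\|v_n\|_2=\|Q\|_2$, $\|\nabla v_n\|_2\to\infty$ and $E_{\mathrm{crit}}(v_n)/\|\nabla v_n\|_2^2\to 0$ admits, after passing to a subsequence, scales $\lambda_n$ with $\lambda_n\|\nabla v_n\|_2\to\|\nabla Q\|_2$ (in particular $\lambda_n\to 0$), centres $x_n\in\mathbb R^N$ and phases $\gamma_n\in\mathbb R$ such that $\lambda_n^{N/2}e^{i\gamma_n}v_n(\lambda_n\,\cdot\,+x_n)\to Q$ strongly in $H^1(\mathbb R^N)$. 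Applying this to $v_n=u(t_n)$, I would use $N\ge 2$ and the radial symmetry of $u(t_n)$ to rule out $|x_n|/\lambda_n\to\infty$: a single rescaled $Q$-bump centred at distance $\gg\lambda_n$ from the origin would, by radial symmetry, be accompanied by comparable bumps distributed over the sphere of radius $|x_n|$, carrying total mass $\gtrsim(|x_n|/\lambda_n)^{N-1}\to\infty$, contradicting $\|u(t_n)\|_2=\|Q\|_2$. Hence $|x_n|/\lambda_n$ is bounded, and after extracting a further subsequence so that $x_n/\lambda_n\to y_\ast$, one obtains $\lambda_n^{N/2}e^{i\gamma_n}u(t_n,\lambda_n\,\cdot\,)\to\widetilde Q:=Q(\,\cdot\,-y_\ast)$ in $H^1$, with $\lambda_n\to 0$.

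The contradiction now comes from evaluating $\mathcal P$ on this profile. The functional $\mathcal P$ depends only on $|v|$, it obeys the scaling law $\mathcal P\big(\lambda^{-N/2}w((\,\cdot\,-x_0)/\lambda)\big)=\lambda^{-2\sigma}\mathcal P(w)$ for all $\lambda>0$, $x_0\in\mathbb R^N$ and $w\in H^1$ (the independence of $x_0$ coming from translation invariance of the kernel $|x-y|^{-2\sigma}$), and it is continuous on $H^1(\mathbb R^N)$ — here \eqref{index1} enters, through the Hardy–Littlewood–Sobolev inequality followed by the embedding $H^1(\mathbb R^N)\hookrightarrow L^{2N/(N-\sigma)}(\mathbb R^N)$, which is valid exactly for $\sigma<\min\{N/2,2\}$; moreover $\mathcal P(\widetilde Q)=\mathcal P(Q)$ is a finite positive number, finiteness at the origin following from $2\sigma<N$ and at infinity from the exponential decay of $Q$. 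Writing $w_n(y):=\lambda_n^{N/2}e^{i\gamma_n}u(t_n,\lambda_n y)\to\widetilde Q$ in $H^1$, one has $u(t_n,x)=\lambda_n^{-N/2}e^{-i\gamma_n}w_n(x/\lambda_n)$, and therefore
\[
\mathcal P(u(t_n))=\lambda_n^{-2\sigma}\,\mathcal P(w_n)=\lambda_n^{-2\sigma}\big(\mathcal P(Q)+o(1)\big)\longrightarrow+\infty
\]
because $\lambda_n\to 0$ and $\sigma>0$. This contradicts $\mathcal P(u(t_n))=E(u_0)-E_{\mathrm{crit}}(u(t_n))\le E(u_0)<\infty$. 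So no such sequence $t_n$ exists, $\sup_t\|\nabla u(t)\|_2<\infty$, and $u$ is global and bounded in $H^1(\mathbb R^N)$.

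The step I expect to require the most care is the concentration statement: having it available precisely in the form used above — strong $H^1$ convergence to a single rescaled, translated and phase-shifted copy of $Q$, with $\lambda_n$ comparable to $\|\nabla u(t_n)\|_2^{-1}$ — and then correctly disposing of the translation parameter using $N\ge 2$ and radial symmetry. The energy bookkeeping via the sharp Gagliardo–Nirenberg inequality and the scaling and $H^1$-continuity of $\mathcal P$ under \eqref{index1} are routine. I would also remark that radial symmetry is used here to place the concentration near the origin, in parallel with the inverse-power potential case of Theorem \ref{theorem:NI2}; for the translation-invariant Hartree self-interaction, however, $\mathcal P(u(t_n))\to+\infty$ holds wherever $u(t_n)$ concentrates, so this localisation is not strictly needed for that step.
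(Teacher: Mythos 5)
Your argument is correct and is essentially the paper's own proof: it coincides, step for step, with the proof of Theorem \ref{theorem:NEMBS} in Section \ref{ProofNEMBS} (rescaling by $\lambda_n=\|\nabla Q\|_2/\|\nabla u(t_n)\|_2$, the bound $E_{\mathrm{crit}}(u(t_n))\le E(u_0)$ coming from the nonnegativity of the Hartree energy, concentration of the rescaled sequence to $Q$ up to translation and phase, and the $\lambda_n^{-2\sigma}$ scaling of the translation-invariant Hartree functional forcing $\mathcal{P}(u(t_n))\to\infty$, contradicting the uniform energy bound). Two remarks. First, a bookkeeping point about the statement itself: with the paper's convention $E(u)=E_{\mathrm{crit}}(u)\mp G(u)$, your identity $E=E_{\mathrm{crit}}+\mathcal{P}$ corresponds to the case $\pm=-$ of \eqref{NLS}, i.e.\ exactly Theorem \ref{theorem:NEMBS}; the statement of Theorem \ref{theorem:NIL2} as printed refers to \eqref{NLS} with $\pm=+$, which is the focusing case in which a minimal-mass blow-up solution does exist (Theorem \ref{theorem:EMBS}) --- the equation reference there is evidently a slip for \eqref{IPLNLS}, the cited result of \cite{NIL}. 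Your reading (the Hartree term entering the energy with a plus sign) is the only one for which the claim holds, and it is the one the paper actually proves. Second, your extra step ruling out $|x_n|/\lambda_n\to\infty$ via radial symmetry and $N\ge 2$ is superfluous, as you yourself observe: since $G$ is invariant under translations, the argument goes through with no symmetry assumption, which is precisely the point made in the paper's comparison of Theorems \ref{theorem:LMR2}, \ref{theorem:NI2}, and \ref{theorem:NEMBS}.
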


\subsection{Main results}
Assuming \eqref{index1} with $\pm=-$ or \eqref{index2} with $\pm=+$, we can show from Gagliardo–Nirenberg inequality that subcritical solution for \eqref{NLS} are global and bounded in $H^1(\mathbb{R}^N)$.

On the other hand, regarding critical mass in \eqref{NLS} with $\pm=+$, we obtain the following result:

\begin{theorem}[Existence of a minimal-mass blow-up solution]
\label{theorem:EMBS}
Assume \eqref{index2}. Then for any energy level $E_0\in\mathbb{R}$, there exist $t_0<0$ and a radially symmetric initial value $u_0\in H^1(\mathbb{R}^N)$ with
\[
\|u_0\|_2=\|Q\|_2,\quad E(u_0)=E_0
\]
such that the corresponding solution $u$ for \eqref{NLS} with $\pm=+$ and $u(t_0)=u_0$ blows up at $t=0$. Moreover,
\[
\left\|u(t)-\frac{1}{\lambda(t)^\frac{N}{2}}P\left(t,\frac{x}{\lambda(t)}\right)e^{-i\frac{b(t)}{4}\frac{|x|^2}{\lambda(t)^2}+i\gamma(t)}\right\|_{\Sigma^1}\rightarrow 0\quad (t\nearrow 0)
\]
holds for some blow-up profile $P$ and $C^1$ functions $\lambda:(t_0,0)\rightarrow(0,\infty)$ and $b,\gamma:(t_0,0)\rightarrow\mathbb{R}$ such that
\begin{align*}
P(t)&\rightarrow Q\quad\mbox{in}\ H^1(\mathbb{R}^N),&\lambda(t)&=C_1(\sigma)|t|^{\frac{1}{1+\sigma}}\left(1+o(1)\right),\\
b(t)&=C_2(\sigma)|t|^{\frac{1-\sigma}{1+\sigma}}\left(1+o(1)\right),& \gamma(t)^{-1}&=O\left(|t|^{\frac{1-\sigma}{1+\sigma}}\right)
\end{align*}
as $t\nearrow 0$.
\end{theorem}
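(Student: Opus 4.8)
The plan is to adapt the now-standard construction of minimal-mass blow-up solutions by perturbation of the pseudo-conformal blow-up $S$, as carried out by Rapha\"el--Szeftel \cite{RSEU}, Le Coz--Martel--Rapha\"el \cite{LMR}, and, in the closely related inverse-power potential case, in \cite{NI}. The organising principle is the scaling observation recalled in the abstract: in the renormalised variables $u(t,x)=\lambda(t)^{-N/2}v(s,y)$, $y=x/\lambda(t)$, $ds/dt=\lambda(t)^{-2}$, one computes
\[
\left(\frac{1}{|x|^{2\sigma}}\star|u|^2\right)u
=\lambda^{-\frac N2-2\sigma}\,\widetilde g\!\left(\frac{x}{\lambda}\right),\qquad
\widetilde g:=\left(\frac{1}{|y|^{2\sigma}}\star|v|^2\right)v ,
\]
so that relative to $\Delta u$, $i\partial_t u$ and $|u|^{4/N}u$ the Hartree term carries the prefactor $\lambda^{2(1-\sigma)}$, which is small as $\lambda\to0$ exactly under assumption \eqref{index2}. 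This is the same scaling as the potential term $|x|^{-2\sigma}u$ treated in \cite{NI}, which is why the same blow-up rate $\lambda(t)\sim|t|^{1/(1+\sigma)}$ is expected and why one can run essentially the same argument, with the potential contribution replaced throughout by the Hartree contribution.

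\textbf{Step 1 (approximate profile and formal modulation law).} I would first construct, for small $b>0$, an approximate solution $P=P_b$ of the renormalised equation by a formal expansion about $Q$: $P_b = Q + bR_1 + b^2R_2 + \cdots$ together with corrector terms carrying powers of $\lambda^{1-\sigma}$ produced by the Hartree term, solving at each order the linear elliptic equation governed by the linearised operator $(L_+,L_-)$ around $Q$ and using the explicit elements of its (generalised) kernel ($Q$, $\Lambda Q=(\tfrac N2+y\cdot\nabla)Q$, $|y|^2Q$, and the associated $\rho$). The only genuinely new corrector, compared with \cite{NI}, solves an equation whose source involves $\bigl(\tfrac{1}{|y|^{2\sigma}}\star Q^2\bigr)Q$; since $\tfrac{1}{|y|^{2\sigma}}\star Q^2$ is, by Hardy--Littlewood--Sobolev together with \eqref{index2}, a smooth, positive, radial function decaying like $|y|^{-2\sigma}$, this source is exponentially decaying and the required solvability, regularity and decay of the corrector follow exactly as for the smooth-potential correctors --- indeed, unlike the potential case this source is smooth at the origin, which is one reason hypothesis \eqref{index2}, without the extra restriction $\sigma<N/4$ imposed in \cite{NI}, suffices here. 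Tracking the Fredholm solvability conditions fixes the modulation ODE system, schematically
\[
\frac{\lambda_s}{\lambda}+b=0,\qquad b_s+b^2=c_\sigma\,\lambda^{2(1-\sigma)}\bigl(1+o(1)\bigr),\qquad \gamma_s=1+O(b),
\]
with $c_\sigma>0$ an explicit constant built from $\int\bigl(\tfrac{1}{|y|^{2\sigma}}\star Q^2\bigr)Q^2$ and virial identities; integrating this system gives $\lambda(t)=C_1(\sigma)|t|^{1/(1+\sigma)}(1+o(1))$, $b(t)=C_2(\sigma)|t|^{(1-\sigma)/(1+\sigma)}(1+o(1))$ and $\gamma(t)^{-1}=O\bigl(|t|^{(1-\sigma)/(1+\sigma)}\bigr)$. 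To reach an arbitrary energy $E_0\in\mathbb R$ I would, as in \cite{LMR,NI}, carry one additional free parameter in the profile (equivalently, enlarge the class of admissible data), noting that the energy of the modulated profile is at leading order an affine function of this parameter and hence can be prescribed to equal $E_0$.

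\textbf{Step 2 (decomposition, bootstrap, compactness).} For a sequence $t_n\nearrow0$ let $u_n$ solve \eqref{NLS} with $\pm=+$ and data at $t=t_n$ equal to the modulated profile; taking the data in $\Sigma^2\cap H^1_{\mathrm{rad}}$ gives, by the local theory recalled in the introduction, solutions that remain radial and in $\Sigma^2$. I would decompose
\[
u_n(t,x)=\frac{1}{\lambda(t)^{N/2}}\bigl(P_{b(t)}+\varepsilon\bigr)\!\left(t,\frac{x}{\lambda(t)}\right)e^{-i\frac{b(t)}{4}\frac{|x|^2}{\lambda(t)^2}+i\gamma(t)},
\]
with $\Sigma^1$-orthogonality conditions on $\varepsilon$ determining $C^1$ modulation parameters $\lambda,b,\gamma$ (and the energy parameter). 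One then derives the evolution equation for $\varepsilon$, the perturbed modulation equations, and --- the analytic heart of the matter --- a priori bounds on $\|\varepsilon(t)\|_{\Sigma^1}$ on a time interval $[t_0,0)$ uniform in $n$, by a bootstrap argument using: the mass and energy conservation laws; a mixed energy--virial Lyapunov functional $\mathcal F(\varepsilon)$ adapted to the problem; a coercivity estimate $\mathcal F(\varepsilon)\gtrsim\|\varepsilon\|_{H^1}^2$ (valid because $\|u_n\|_2=\|Q\|_2$ and using the spectral structure of $(L_+,L_-)$); and an almost-monotonicity estimate for $\mathcal F/\lambda^2$. A compactness argument then lets $t_n\to0$ along a subsequence and produces a solution $u$ on $[t_0,0)$ with $\|u(t_0)\|_2=\|Q\|_2$, $E(u(t_0))=E_0$ and $\lambda(t)\to0$, hence blowing up at $t=0$ by the blow-up alternative, with $\varepsilon(t)\to0$ in $\Sigma^1$; rewriting the decomposition yields the stated convergence to $P(t)=P_{b(t)}\to Q$ with the asserted asymptotics for $\lambda,b,\gamma$.

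\textbf{Main obstacle.} The step requiring genuinely new work is the control of the Hartree interaction terms in both the coercivity and the monotonicity estimates, that is, of quantities such as $\int\bigl(\tfrac{1}{|x|^{2\sigma}}\star\re(\overline P\varepsilon)\bigr)\re(\overline P\varepsilon)$, the cross terms coupling $\varepsilon$ to the modulated profile, and $\int\bigl(\tfrac{1}{|x|^{2\sigma}}\star|\varepsilon|^2\bigr)|\varepsilon|^2$, in which the local singularity of $|x|^{-2\sigma}$ at the origin and its slow decay at infinity interact with the concentration of $\varepsilon$ at scale $\lambda$. These are handled by Hardy--Littlewood--Sobolev, the radial Sobolev and Hardy inequalities, and the smallness supplied by the $\lambda^{2(1-\sigma)}$ prefactor; assumption \eqref{index2} is precisely what renders every such term subcritical and of lower order, so that $\mathcal F$ stays coercive and almost monotone and the bootstrap closes. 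A secondary technical point, also inherited from \cite{NI}, is that the $\Sigma^1$ (and auxiliary $\Sigma^2$) estimates must be propagated uniformly in $n$ down to the blow-up time; the weighted local theory quoted in the introduction and the decay of the profile make this routine once the $H^1$-level estimates are in place.
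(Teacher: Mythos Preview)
Your proposal is correct and follows essentially the same route as the paper: construct the profile $P$ by a $(b,\lambda^{\alpha})$-expansion around $Q$ (Proposition~\ref{theorem:constprof}), decompose via modulation with the orthogonality conditions of Lemma~\ref{decomposition}, close a bootstrap using the weighted Lyapunov functional $S=H/\lambda^{m}$ (Lemmas~\ref{Sesti}--\ref{rebootstrap}), and pass to the limit along a sequence $t_n\nearrow 0$ by compactness. Two minor corrections: the Lyapunov weight in the paper is $\lambda^{-m}$ with $m$ large rather than $\lambda^{-2}$, and the paper in fact stresses that the Hartree correctors are \emph{easier} than in the inverse-potential case of \cite{NI} because convolution with $|y|^{-2\sigma}$ smooths the singularity at the origin---so your ``main obstacle'' is, if anything, milder here than you suggest.
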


On the other hand, the following result holds in \eqref{NLS} with $\pm=-$.

\begin{theorem}[Non-existence of a radial minimal-mass blow-up solution]
\label{theorem:NEMBS}
Assume \eqref{index1}. If $u_0\in H^1(\mathbb{R}^N)$ such that $\|u_0\|_2=\|Q\|_2$, the corresponding solution $u$ for \eqref{NLS} with $\pm=-$ and $u(0)=u_0$ is global and bounded in $H^1(\mathbb{R}^N)$.
\end{theorem}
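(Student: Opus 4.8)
The plan is to argue by contradiction. Suppose the solution $u$ with $\|u_0\|_2=\|Q\|_2$ is \emph{not} global and bounded in $H^1(\mathbb{R}^N)$. Since the mass is conserved along the flow, this means $\|\nabla u(t)\|_2$ is unbounded on the maximal interval $(T_*,T^*)$, so — by the blow-up alternative if $T^*<\infty$ (resp. if $T_*>-\infty$), and trivially otherwise — there exists a sequence $t_n\in(T_*,T^*)$ with $L_n:=\|\nabla u(t_n)\|_2\to+\infty$. Set $\rho_n:=\|\nabla Q\|_2/L_n\to 0^+$ and $v_n(x):=\rho_n^{N/2}u(t_n,\rho_n x)$, so that $\|v_n\|_2=\|Q\|_2$ and $\|\nabla v_n\|_2=\|\nabla Q\|_2$ for every $n$.

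Next I would track the two nonlinear functionals under this rescaling. Let $E_{\mathrm{crit}}$ denote the energy associated with \eqref{CNLS}, and write $H(w):=\int_{\mathbb{R}^N}\big(|x|^{-2\sigma}\star|w|^2\big)|w|^2\,dx$ for the Hartree functional, so that $E(w)=E_{\mathrm{crit}}(w)+\tfrac14 H(w)$ in the case $\pm=-$. The map $w\mapsto\rho^{N/2}w(\rho\,\cdot)$ multiplies $E_{\mathrm{crit}}$ by $\rho^{2}$ and $H$ by $\rho^{2\sigma}$, hence
\[
E_{\mathrm{crit}}(v_n)=\rho_n^{2}\,E_{\mathrm{crit}}(u(t_n)),\qquad H(v_n)=\rho_n^{2\sigma}\,H(u(t_n)).
\]
The sharp Gagliardo–Nirenberg inequality recalled above gives $E_{\mathrm{crit}}(w)\geq 0$ whenever $\|w\|_2=\|Q\|_2$; applied to $v_n$ this yields $E_{\mathrm{crit}}(u(t_n))\geq 0$. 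Since $H\geq 0$ and $E$ is conserved, the identity $E(u_0)=E_{\mathrm{crit}}(u(t_n))+\tfrac14 H(u(t_n))$ forces $0\leq E_{\mathrm{crit}}(u(t_n))\leq E(u_0)$ and $H(u(t_n))\leq 4E(u_0)$, uniformly in $n$ (in particular $E(u_0)\geq 0$; if $E(u_0)<0$ the contradiction is already reached here). Combined with $\rho_n\to 0$, the first identity then gives $E_{\mathrm{crit}}(v_n)\to 0$.

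The crux is the rigidity of Gagliardo–Nirenberg at the critical mass. The sequence $(v_n)$ obeys $\|v_n\|_2=\|Q\|_2$, $\|\nabla v_n\|_2=\|\nabla Q\|_2$ and $E_{\mathrm{crit}}(v_n)\to 0$; by the variational characterisation of $Q$ (compactness of near-minimising sequences for the Gagliardo–Nirenberg inequality, obtained via a profile decomposition, as in the earlier works), after extracting a subsequence there exist $\theta_n\in\mathbb{R}$ and $y_n\in\mathbb{R}^N$ with $e^{i\theta_n}v_n(\cdot-y_n)\to Q$ in $H^1(\mathbb{R}^N)$. Under \eqref{index1}, the Hardy–Littlewood–Sobolev inequality together with $H^1(\mathbb{R}^N)\hookrightarrow L^{2N/(N-\sigma)}(\mathbb{R}^N)$ shows that $H$ is finite and continuous on $H^1(\mathbb{R}^N)$; since $H$ is moreover invariant under phase shifts and translations, $H(v_n)=H\big(e^{i\theta_n}v_n(\cdot-y_n)\big)\to H(Q)$, and $H(Q)>0$ because $Q$ is positive. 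Feeding this into the second scaling identity gives $H(u(t_n))=\rho_n^{-2\sigma}H(v_n)\to+\infty$, since $\sigma>0$ and $\rho_n\to 0$ — contradicting the uniform bound $H(u(t_n))\leq 4E(u_0)$. This completes the argument.

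I expect the only genuinely delicate step to be the variational characterisation of $(v_n)$, i.e. the concentration-compactness input; everything else is bookkeeping with scaling exponents and conservation laws. It is precisely here that the convolution structure helps: unlike the pointwise inverse-power potential of \cite{NI,NIL}, the Hartree functional is continuous on all of $H^1(\mathbb{R}^N)$ by Hardy–Littlewood–Sobolev, so neither radial symmetry nor a lower bound on $N$ is needed, and the same computation rules out finite-time blow-up and unbounded global growth simultaneously.
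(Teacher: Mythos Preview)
Your proof is correct and follows essentially the same approach as the paper's: rescale along a sequence with $\|\nabla u(t_n)\|_2\to\infty$, use sharp Gagliardo--Nirenberg to get $E_{\mathrm{crit}}(v_n)\to 0$, invoke the variational characterisation of $Q$ for $H^1$-convergence up to translation and phase, then exploit the translation invariance and scaling of the Hartree functional to reach a contradiction. Your version is in fact slightly more careful than the paper's, explicitly covering the global-but-unbounded case and spelling out the $H^1$-continuity of $H$ via Hardy--Littlewood--Sobolev.
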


See Section \ref{ProofNEMBS} for the proof.

\subsection{Comments regarding the main results}
We compare Theorem \ref{theorem:NI1} and Theorem \ref{theorem:EMBS}. In terms of blow-up rate, these result behave very similarly. On the other hand, in the construction of the blow-up profile in Proposition \ref{theorem:constprof}, we do not need to pay attention to the singularity near the origin due to the smoothing effect of convolution.

We compare Theorem \ref{theorem:LMR2}, Theorem \ref{theorem:NI2}, and Theorem \ref{theorem:NEMBS}. In Theorem \ref{theorem:NEMBS}, the radial symmetry of solution is not assumed. The reason for not assuming radial symmetry in Theorem \ref{theorem:LMR2} is that $L^p$-norms are invariant with respect to translations. On the other hand, the reason for assuming radial symmetry in Theorem \ref{theorem:NI2} is that the inverse potential is not invariant with respect to translations. In the case of Theorem \ref{theorem:NEMBS}, since the energy is invariant with respect to translations, the proof can be done without assuming radial symmetry, as in the case of Theorem \ref{theorem:LMR2}. This suggests that, with respect to blow-up, Hartree nonlinearity behaves in an intermediate way between inverse potentials and power nonlinearity.

\section{Notations}
In this section, we introduce the notation used in this paper.

Let
\[
\mathbb{N}:=\mathbb{Z}_{\geq 1},\quad\mathbb{N}_0:=\mathbb{Z}_{\geq 0}.
\]
We define
\begin{align*}
(u,v)_2&:=\re\int_{\mathbb{R}^N}u(x)\overline{v}(x)dx,&\left\|u\right\|_p&:=\left(\int_{\mathbb{R}^N}|u(x)|^pdx\right)^\frac{1}{p},\\
f(z)&:=|z|^\frac{4}{N}z,&F(z)&:=\frac{1}{2+\frac{4}{N}}|z|^{2+\frac{4}{N}}\quad \mbox{for $z\in\mathbb{C}$}.
\end{align*}
By identifying $\mathbb{C}$ with $\mathbb{R}^2$, we denote the differentials of $f$ and $F$ by $df$ and $dF$, respectively. Moreover, we define
\[
G(v):=\frac{1}{4}\int_{\mathbb{R}^N}\left(\frac{1}{|x|^{2\sigma}}\star|v|^2\right)(x)|v(x)|^2dx,\quad g(v):=\left(\frac{1}{|x|^{2\sigma}}\star|v|^2\right)v\quad\mbox{for $v\in H^1(\mathbb{R})$}.
\]
We define
\[
\Lambda:=\frac{N}{2}+x\cdot\nabla,\quad L_+:=-\Delta+1-\left(1+\frac{4}{N}\right)Q^\frac{4}{N},\quad L_-:=-\Delta+1-Q^\frac{4}{N}.
\]
Namely, $\Lambda$ is the generator of $L^2$-scaling, and $L_+$ and $L_-$ come from the linearised Schr\"{o}dinger operator to close $Q$. Then
\[
L_-Q=0,\quad L_+\Lambda Q=-2Q,\quad L_-|x|^2Q=-4\Lambda Q,\quad L_+\rho=|x|^2 Q
\]
hold, where $\rho\in\mathcal{S}(\mathbb{R}^N)$ is the unique radial solution for $L_+\rho=|x|^2 Q$. Note that there exist $C_\alpha,\kappa_\alpha>0$ such that
\[
\left|\left(\frac{\partial}{\partial x}\right)^\alpha Q(x)\right|\leq C_\alpha Q(x),\quad \left|\left(\frac{\partial}{\partial x}\right)^\alpha \rho(x)\right|\leq C_\alpha(1+|x|)^{\kappa_\alpha} Q(x).
\]
for any multi-index $\alpha$. Furthermore, there exists $\mu>0$ such that for all $u\in H_{\mathrm{rad}}^1(\mathbb{R}^N)$,
\begin{align}
\label{Lcoer}
&\left\langle L_+\re u,\re u\right\rangle+\left\langle L_-\im u,\im u\right\rangle\nonumber\\
\geq&\ \mu\left\|u\right\|_{H^1}^2-\frac{1}{\mu}\left({(\re u,Q)_2}^2+{(\re u,|x|^2 Q)_2}^2+{(\im u,\rho)_2}^2\right)
\end{align}
(e.g., see \cite{MRO,MRUPB,RSEU,WL}). We denote by $\mathcal{Y}$ the set of functions $g\in C^{\infty}(\mathbb{R}^N)$ such that
\[
\exists C_\alpha,\kappa_\alpha>0,\ \left|\left(\frac{\partial}{\partial x}\right)^\alpha g(x)\right|\leq C_\alpha(1+|x|)^{\kappa_{\alpha}}Q(x)
\]
for any multi-index $\alpha$.

Finally, we use the notation $\lesssim$ and $\gtrsim$ when the inequalities hold up to a positive constant. We also use the notation $\approx$ when $\lesssim$ and $\gtrsim$ hold. Moreover, positive constants $C$ and $\epsilon$ are sufficiently large and small, respectively.

\section{Construction of a blow-up profile}
\label{sec:constprof}
In this section, we construct a blow-up profile $P$ and introduce a decomposition of functions based on the methodology in \cite{LMR,RSEU}.

Heuristically, we state the strategy. We look for a blow-up solution in the form of
\[
u(t,x)=\frac{1}{\lambda(s)^\frac{N}{2}}v\left(s,y\right)e^{-i\frac{b(s)|y|^2}{4}+i\gamma(s)},\quad y=\frac{x}{\lambda(s)},\quad \frac{ds}{dt}=\frac{1}{\lambda(s)^2},
\]
where $v$ satisfies
\begin{align}
\label{veq}
0&=i\frac{\partial v}{\partial s}+\Delta v-v+f(v)+\lambda^\alpha \left(\frac{1}{|y|^{2\sigma}}\star|v|^2\right)v\nonumber\\
&\hspace{20pt}-i\left(\frac{1}{\lambda}\frac{\partial \lambda}{\partial s}+b\right)\Lambda v+\left(1-\frac{\partial \gamma}{\partial s}\right)v+\left(\frac{\partial b}{\partial s}+b^2\right)\frac{|y|^2}{4}v-\left(\frac{1}{\lambda}\frac{\partial \lambda}{\partial s}+b\right)b\frac{|y|^2}{2}v,
\end{align}
where $\alpha=2-2\sigma$. Since we look for a blow-up solution, it may holds that $\lambda(s)\rightarrow 0$ as $s\rightarrow\infty$. Therefore, it seems that $\lambda^\alpha \left(|y|^{-2\sigma}\star|v|^2\right)v$ is ignored. By ignoring $\lambda^\alpha \left(|y|^{-2\sigma}\star|v|^2\right)v$,
\[
v(s,y)=Q(y),\quad \frac{1}{\lambda}\frac{\partial \lambda}{\partial s}+b=1-\frac{\partial \gamma}{\partial s}=\frac{\partial b}{\partial s}+b^2=0
\]
is a solution of \eqref{veq}. Accordingly, $v$ is expected to be close to $Q$. We now consider the case where $\sigma=0$, i.e., the critical problem. Then $\lambda^2v$ corresponds to the linear term with the constant coefficient and can be removed by an appropriate transformation. In other words, $\lambda^2v$ is a negligible term for the construction of minimal-mass blow-up solutions. This suggests that $\alpha=2$ may be the threshold for ignoring the term in the context of minimal-mass blow-up. Therefore, $\lambda^\alpha \left(|y|^{-2\sigma}\star|v|^2\right)v$ may become a non-negligible term if $\alpha<2$, i.e., $\sigma>0$. Also, \eqref{veq} is difficult to solve explicitly. Consequently, we construct an approximate solution $P$ that is close to $Q$ and fully incorporates the effects of $\lambda^\alpha \left(|y|^{-2\sigma}\star|v|^2\right)v$, e.g., the singularity of the origin.

For $K\in\mathbb{N}$, we define
\[
\Sigma_K:=\left\{\ (j,k)\in{\mathbb{N}_0}^2\ \middle|\ j+k\leq K\ \right\}.
\]

\begin{proposition}
\label{theorem:constprof}
Let $K\in\mathbb{N}$ be sufficiently large. Let $\lambda(s)>0$ and $b(s)\in\mathbb{R}$ be $C^1$ functions of $s$ such that $\lambda(s)+|b(s)|\ll 1$.

(i) \textit{Existence of blow-up profile.} For any $(j,k)\in\Sigma_K$, there exist $P_{j,k}^+,P_{j,k}^-\in\mathcal{Y}$, $\beta_{j,k}\in\mathbb{R}$, and $\Psi\in H^1(\mathbb{R}^N)$ such that $P$ satisfies
\[
i\frac{\partial P}{\partial s}+\Delta P-P+f(P)+\lambda^\alpha \left(\frac{1}{|y|^{2\sigma}}\star|P|^2\right)P+\theta\frac{|y|^2}{4}P=\Psi,
\]
where $\alpha=2-2\sigma$, and  $P$ and $\theta$ are defined by
\begin{align*}
P(s,y)&:=Q(y)+\sum_{(j,k)\in\Sigma_K}\left(b(s)^{2j}\lambda(s)^{(k+1)\alpha}P_{j,k}^+(y)+ib(s)^{2j+1}\lambda(s)^{(k+1)\alpha}P_{j,k}^-(y)\right),\\
\theta(s)&:=\sum_{(j,k)\in\Sigma_K}b(s)^{2j}\lambda(s)^{(k+1)\alpha}\beta_{j,k}.
\end{align*}
Moreover, for some sufficiently small $\epsilon'>0$,
\[
\left\|e^{\epsilon'|y|}\Psi\right\|_{H^1}\lesssim\lambda^\alpha\left(\left|\frac{1}{\lambda}\frac{\partial \lambda}{\partial s}+b\right|+\left|\frac{\partial b}{\partial s}+b^2-\theta\right|\right)+(b^2+\lambda^\alpha)^{K+2}
\]
holds.

(ii) \textit{Mass and energy properties of blow-up profile.} Let define
\[
P_{\lambda,b,\gamma}(s,x):=\frac{1}{\lambda(s)^\frac{N}{2}}P\left(s,\frac{x}{\lambda(s)}\right)e^{-i\frac{b(s)}{4}\frac{|x|^2}{\lambda(s)^2}+i\gamma(s)}.
\]
Then
\begin{align*}
\left|\frac{d}{ds}\|P_{\lambda,b,\gamma}\|_2^2\right|&\lesssim\lambda^\alpha\left(\left|\frac{1}{\lambda}\frac{\partial \lambda}{\partial s}+b\right|+\left|\frac{\partial b}{\partial s}+b^2-\theta\right|\right)+(b^2+\lambda^\alpha)^{K+2},\\
\left|\frac{d}{ds}E(P_{\lambda,b,\gamma})\right|&\lesssim\frac{1}{\lambda^2}\left(\left|\frac{1}{\lambda}\frac{\partial \lambda}{\partial s}+b\right|+\left|\frac{\partial b}{\partial s}+b^2-\theta\right|+(b^2+\lambda^\alpha)^{K+2}\right)
\end{align*}
hold. Moreover,
\begin{eqnarray}
\label{Eesti}
\left|8E(P_{\lambda,b,\gamma})-\||y|Q\|_2^2\left(\frac{b^2}{\lambda^2}-\frac{2\beta}{2-\alpha}\lambda^{\alpha-2}\right)\right|\lesssim\frac{\lambda^\alpha(b^2+\lambda^\alpha)}{\lambda^2}
\end{eqnarray}
holds, where 
\[
\beta:=\beta_{0,0}=\frac{2\sigma\left(\left(\frac{1}{|y|^{2\sigma}}\star Q^2\right)Q,Q\right)_2}{\||\cdot|Q\|_2^2}.
\]
\end{proposition}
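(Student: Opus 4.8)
The plan is to construct the profile $P$ by a formal power-series ansatz in the two small parameters $b^2$ and $\lambda^\alpha$ (with an extra factor of $b$ attached to the odd-power, imaginary corrections $P_{j,k}^-$), plug it into the equation, and solve order by order using the inverse of the linearised operator. Concretely, I would substitute
\[
P=Q+\sum_{(j,k)\in\Sigma_K}\bigl(b^{2j}\lambda^{(k+1)\alpha}P_{j,k}^+ + i b^{2j+1}\lambda^{(k+1)\alpha}P_{j,k}^-\bigr)
\]
into $i\partial_s P+\Delta P-P+f(P)+\lambda^\alpha(|y|^{-2\sigma}\star|P|^2)P+\theta\frac{|y|^2}{4}P$, using the modulation relations $\frac1\lambda\lambda_s+b\approx0$, $b_s+b^2\approx\theta$ to evaluate the $s$-derivatives (so that $\partial_s$ acting on $b^{2j}\lambda^{(k+1)\alpha}$ lowers the power structure in a controlled way), and collect terms according to their homogeneity degree in $(b,\lambda^\alpha)$. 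The term $\theta\frac{|y|^2}{4}P$ is the device that allows us to kill the secular $|y|^2Q$-type resonances: at the lowest nontrivial order, the coefficient of $\lambda^\alpha$ forces an equation of the schematic form $L_+ P_{0,0}^+ = (|y|^{-2\sigma}\star Q^2)Q - \beta_{0,0}\frac{|y|^2}{4}Q$ (real part) and $L_- P_{0,0}^- = \text{(something)}$ (imaginary part), and one chooses $\beta_{0,0}$ precisely so that the right-hand side of the $L_+$ equation is orthogonal to the kernel direction — this is where the explicit formula for $\beta$ comes from, via the identity $L_+\Lambda Q=-2Q$ and $L_+\rho=|y|^2Q$.

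The iteration then proceeds by induction on $j+k$: at each order the equations are $L_+ P_{j,k}^+ = (\text{explicit combination of lower-order profiles and their convolutions})-\beta_{j,k}\frac{|y|^2}{4}Q$ and $L_- P_{j,k}^- = (\text{similar})$, where $\beta_{j,k}$ is again fixed by the solvability condition for $L_+$ (orthogonality to $Q$); the $L_-$ equation is solvable because the forcing is orthogonal to $\ker L_-=\mathrm{span}\{Q\}$, which holds for parity/structural reasons. Solvability in the class $\mathcal{Y}$ is what we need: I would invoke the standard fact that $L_\pm$ acting on radial Schwartz functions with exponential decay has an inverse mapping $\mathcal{Y}$ into $\mathcal{Y}$ (this uses the exponential decay of $Q$, the pointwise bounds $|\partial^\alpha Q|\lesssim Q$, and the known spectral structure of $L_\pm$). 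The convolution $(|y|^{-2\sigma}\star Q^2)$ and its higher analogues need to be controlled: since $Q$ is Schwartz and $0<\sigma<N/2$, the convolution is smooth (the origin singularity of $|y|^{-2\sigma}$ is locally integrable), bounded, and decays polynomially, so it multiplies elements of $\mathcal{Y}$ back into a class with at most polynomial-growth-times-$Q$ decay — hence stays in $\mathcal{Y}$. This is the point flagged in the "Comments" section: the smoothing of convolution means no origin singularity needs separate treatment, unlike the inverse-power potential case.

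Once $P$, the $\beta_{j,k}$ (hence $\theta$), and the residual $\Psi$ (which collects all terms of degree $>K$ in $(b,\lambda^\alpha)$ plus the terms proportional to the modulation errors $\frac1\lambda\lambda_s+b$ and $b_s+b^2-\theta$) are defined, part (i)'s estimate on $\|e^{\epsilon'|y|}\Psi\|_{H^1}$ follows by counting: each leftover monomial carries either a factor $(b^2+\lambda^\alpha)^{K+2}$ (after bounding $b^{2j}\lambda^{(k+1)\alpha}$ for $j+k>K$, together with an overall $\lambda^\alpha$ from the nonlinearity) or an explicit modulation-error factor times $\lambda^\alpha$; the exponential weight is absorbed because every $P_{j,k}^\pm\in\mathcal{Y}$ decays like $e^{-c|y|}$. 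For part (ii), I would write $\|P_{\lambda,b,\gamma}\|_2^2$ and $E(P_{\lambda,b,\gamma})$ after unscaling and compute their $s$-derivatives directly; $\frac{d}{ds}\|P_{\lambda,b,\gamma}\|_2^2 = 2\,\mathrm{Im}\!\int \Psi_{\lambda,b,\gamma}\overline{P_{\lambda,b,\gamma}}$ (using that $P$ solves the equation with error $\Psi$), which gives the first bound by Cauchy–Schwarz and the $\Psi$-estimate; the energy derivative picks up the $\lambda^{-2}$ from rescaling. The estimate \eqref{Eesti} is obtained by expanding $E(P_{\lambda,b,\gamma})$ in $(b,\lambda^\alpha)$: the leading term is $\frac{b^2}{8\lambda^2}\||y|Q\|_2^2$ from the phase $e^{-ib|x|^2/4\lambda^2}$ acting on $Q$, the next term $-\frac{\beta}{4(2-\alpha)}\lambda^{\alpha-2}\||y|Q\|_2^2$ comes from the cross term between $Q$ and $P_{0,0}^+$ together with the leading Hartree contribution, using $L_+\rho=|y|^2Q$ and the definition of $\beta$ to identify the constant; everything else is $O(\lambda^\alpha(b^2+\lambda^\alpha)/\lambda^2)$.

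I expect the main obstacle to be bookkeeping: verifying that the solvability (orthogonality) conditions close at every order $(j,k)\in\Sigma_K$ — i.e. that the forcing for $L_-$ really is orthogonal to $Q$, and that choosing $\beta_{j,k}$ to enforce orthogonality of the $L_+$-forcing to $Q$ does not destroy the parity/decay structure needed for the next step — and simultaneously tracking that every newly produced profile genuinely lies in $\mathcal{Y}$ after being hit by $L_\pm^{-1}$ and multiplied by convolution factors. The algebra is standard in spirit (this is the Raphaël–Szeftel / Le Coz–Martel–Raphaël scheme) but the Hartree term introduces nonlocal products $(|y|^{-2\sigma}\star(P\bar P'))P''$ at each order whose decay and smoothness must be checked; quantifying the exponent in $(b^2+\lambda^\alpha)^{K+2}$ correctly, and ensuring $K$ is taken large enough that this beats the $\lambda^{-2}$ loss in the energy estimate later, is the delicate part.
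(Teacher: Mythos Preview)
Your overall plan matches the paper's proof: substitute the ansatz, expand each term of the equation in powers of $b$ and $\lambda^\alpha$, reduce to a hierarchy of linear systems $(S_{j,k})$ involving $L_\pm$, and solve recursively; the error estimate on $\Psi$ and the mass/energy computations in (ii) are handled as you describe.

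However, you have the solvability mechanism backwards. On radial functions $L_+$ has trivial kernel (its kernel $\mathrm{span}\{\partial_j Q\}$ consists of non-radial functions), so the equation $L_+P_{j,k}^+=F_{j,k}^++\beta_{j,k}\tfrac{|y|^2}{4}Q$ is solvable in $\mathcal{Y}$ for \emph{every} choice of $\beta_{j,k}$; there is no orthogonality condition to impose here. The obstruction lives in the $L_-$ equation, since $\ker L_-=\mathrm{span}\{Q\}$ is radial: one needs the forcing $F_{j,k}^--((k+1)\alpha+2j)P_{j,k}^+$ to be orthogonal to $Q$. This does \emph{not} hold ``for parity/structural reasons''; it is enforced by choosing $\beta_{j,k}$ so that $(P_{j,k}^+,Q)_2$ takes the required value. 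Concretely, at order $(0,0)$ one uses $L_+\Lambda Q=-2Q$ and self-adjointness to write $(P_{0,0}^+,Q)_2=-\tfrac12\langle L_+P_{0,0}^+,\Lambda Q\rangle$, and then sets this equal to zero; this is what produces the formula for $\beta=\beta_{0,0}$. (The identity $L_+\rho=|y|^2Q$ plays no role in this step.) If you tried to carry out your version literally you would find no constraint from the $L_+$ equation and an apparently unresolvable one from $L_-$, so this point is worth getting straight before filling in details.
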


\begin{proof}
See \cite{LMR,NI} for details of proofs.

We prove (i). We set
\[
Z:=\sum_{(j,k)\in\Sigma_K}b^{2j}\lambda^{k\alpha}P_{j,k}^++i\sum_{(j,k)\in\Sigma_K}b^{2j+1}\lambda^{k\alpha}P_{j,k}^-.
\]
Then $P=Q+\lambda^\alpha Z$ holds.
\begin{align*}
\Psi&:=i\frac{\partial P}{\partial s}+\Delta P-P+f(P)+\lambda^\alpha\left(\frac{1}{|y|^{2\sigma}}\star|P|^2\right)P+\theta\frac{|y|^2}{4}P,
\end{align*}
where $P_{j,k}^+,P_{j,k}^-\in\mathcal{Y}$ and $\beta_{j,k},c^+_{j,k}\in\mathbb{R}$ are to be determined.

Firstly, we have
\begin{align*}
i\frac{\partial P}{\partial s}&=-i\sum_{(j,k)\in\Sigma_K}((k+1)\alpha+2j)b^{2j+1}\lambda^{(k+1)\alpha}P_{j,k}^+\\
&\hspace{40pt}+i\sum_{j,k\geq 0}b^{2j+1}\lambda^{(k+1)\alpha}F_{j,k}^{\frac{\partial P}{\partial s},-}+\sum_{j,k\geq 0}b^{2j}\lambda^{(k+1)\alpha}F_{j,k}^{\frac{\partial P}{\partial s},+}+\Psi^\frac{\partial P}{\partial s},
\end{align*}
where
\begin{align*}
\Psi^\frac{\partial P}{\partial s}&=\left(\frac{1}{\lambda}\frac{\partial \lambda}{\partial s}+b\right)\sum_{(j,k)\in\Sigma_K}(k+1)\alpha b^{2j}\lambda^{(k+1)\alpha}(iP_{j,k}^+-bP_{j,k}^-)\nonumber\\
&\hspace{40pt}+\left(\frac{\partial b}{\partial s}+b^2-\theta\right)\sum_{(j,k)\in\Sigma_K}b^{2j-1}\lambda^{(k+1)\alpha}(2jiP_{j,k}^+-(2j+1)bP_{j,k}^-)
\end{align*}
and for $j,k\geq0$, $F_{j,k}^{\frac{\partial P}{\partial s},\pm}$ consists of $P_{j',k'}^\pm$ and $\beta_{j',k'}$ for $(j',k')\in\Sigma_K$ such that $k'\leq k-1$ and $j'\leq j+1$ or $k'\leq k$ and $j'\leq j-1$. Only a finite number of these functions are non-zero. In particular, $F_{j,k}^{\frac{\partial P}{\partial s},\pm}$ belongs to $\mathcal{Y}$ and $F_{0,0}^{\frac{\partial P}{\partial s},\pm}=0$.

Next, we have
\begin{align*}
\Delta P-P+|P|^\frac{4}{N}P=&-\sum_{(j,k)\in\Sigma_K}b^{2j}\lambda^{(k+1)\alpha}L_+P_{j,k}^+-i\sum_{(j,k)\in\Sigma_K}b^{2j+1}\lambda^{(k+1)\alpha}L_-P_{j,k}^-\\
&\hspace{40pt}+\sum_{j,k\geq 0}b^{2j}\lambda^{(k+1)\alpha}F_{j,k}^{f,+}+i\sum_{j,k\geq 0}b^{2j+1}\lambda^{(k+1)\alpha}F_{j,k}^{f,-}+\Psi^f,
\end{align*}
where
\[
\Psi^f=f(Q+\lambda^\alpha Z)-\sum_{k=0}^{K+K'+1}\frac{1}{k!}d^kf(Q)(\lambda^\alpha Z,\cdots,\lambda^\alpha Z)
\]
and for $j,k\geq0$, $F_{j,k}^{f,\pm}$ consists of $Q$, $P_{j',k'}^\pm$, and $\beta_{j',k'}$ for $(j',k')\in\Sigma_K$ such that $k'\leq k-1$ and $j'\leq j$. Only a finite number of these functions are non-zero. In particular, $F_{j,k}^{f,\pm}$ belongs to $\mathcal{Y}$ and $F_{0,0}^{f,\pm}=0$.

Next, we have
\[
\lambda^\alpha \left(\frac{1}{|y|^{2\sigma}}\star|P|^2\right)P=\sum_{j+k\geq0}\left(b^{2j}\lambda^{(k+1)\alpha}F_{j,k}^{\sigma,+}+ib^{2j+1}\lambda^{(k+1)\alpha}F_{j,k}^{\sigma,-}\right),
\]
where for $j,k\geq 0$, $F_{j,k}^{\sigma,\pm}$ consists of $Q$, $P_{j',k'}^\pm$, and $\beta_{j',k'}$ for $(j',k')\in\Sigma_K$ such that  $k'\leq k-1$ and $j'\leq j$. Only a finite number of these functions are non-zero. In particular, $F_{j,k}^{f,\pm}$ belongs to $\mathcal{Y}$ and $F_{0,0}^{\sigma,+}=\left(\frac{1}{|x|^{2\sigma}}\star Q^2\right)Q$ and $F_{0,0}^{\sigma,-}=0$.

Finally, we have
\[
\theta\frac{|y|^2}{4}P=\sum_{(j,k)\in\Sigma_K}b^{2j}\lambda^{(k+1)\alpha}\beta_{j,k}\frac{|y|^2}{4}Q+\sum_{j,k\geq 0}b^{2j}\lambda^{(k+1)\alpha}F_{j,k}^{\theta,+}+i\sum_{j,k\geq 0}b^{2j+1}\lambda^{(k+1)\alpha}F_{j,k}^{\theta,-}
\]
and for $j,k\geq0$, $F_{j,k}^{\theta,\pm}$ consists of $Q$, $P_{j',k'}^\pm$, and $\beta_{j',k'}$ for $(j',k')\in\Sigma_K$ such that $k'\leq k-1$ and $j'\leq j$. Only a finite number of these functions are non-zero. In particular, $F_{j,k}^{\theta,\pm}$ belongs to $\mathcal{Y}$ and $F_{0,0}^{\theta,\pm}=0$.

Here, we define
\begin{align*}
F_{j,k}^{\pm}&:=F_{j,k}^{\frac{\partial P}{\partial s},\pm}+F_{j,k}^{f,\pm}+F_{j,k}^{\sigma,\pm}+F_{j,k}^{\theta,\pm},\\
\Psi^{>K}&:=\sum_{(j,k)\not\in\Sigma_K}b^{2j}\lambda^{(k+1)\alpha}F_{j,k}^++i\sum_{(j,k)\not\in\Sigma_K}b^{2j+1}\lambda^{(k+1)\alpha}F_{j,k}^-,\\
\Psi&:=\Psi^\frac{\partial P}{\partial s}+\Psi^f+\Phi^{>K}
\end{align*}
Then $\Phi^{>K}$ is a finite sum and we obtain
\begin{align*}
&i\frac{\partial P}{\partial s}+\Delta P-P+f(P)+\lambda^\alpha\left(\frac{1}{|y|^{2\sigma}}\star|P|^2\right)P+\theta\frac{|y|^2}{4}P\\
=&\sum_{(j,k)\in\Sigma_K}b^{2j}\lambda^{(k+1)\alpha}\left(-L_+P_{j,k}^++\beta_{j,k}\frac{|y|^2}{4}Q+F_{j,k}^+\right)\\
&\hspace{10pt}+i\sum_{(j,k)\in\Sigma_K}b^{2j+1}\lambda^{(k+1)\alpha}\left(-L_-P_{j,k}^--((k+1)\alpha+2j)P_{j,k}^++F_{j,k}^-\right)\\
&\hspace{20pt}+\Psi.
\end{align*}
For each $(j,k)\in\Sigma_K$, we choose recursively $P_{j,k}^\pm\in\mathcal{Y}$ and $\beta_{j,k},c_{j,k}^+\in\mathbb{R}$ that are solutions for the systems
\begin{empheq}[left={(S_{j,k})\ \empheqlbrace\ }]{align*}
&L_+P_{j,k}^+-F_{j,k}^+-\beta_{j,k}\frac{|y|^2}{4}Q=0,\\
&L_-P_{j,k}^--F_{j,k}^-+((k+1)\alpha+2j)P_{j,k}^+=0.
\end{empheq}
Such solutions $(P_{j,k}^+,P_{j,k}^-,\beta_{j,k})$ are obtained from the later Propositions \ref{theorem:Ssol}.

In the same way as \cite[Proposition 2.1]{LMR}, we have
\[
\left\|e^{\epsilon'|y|}\Psi\right\|_{H^1}\lesssim \lambda^{\alpha}\left(\left|\frac{1}{\lambda}\frac{\partial \lambda}{\partial s}+b\right|+\left|\frac{\partial b}{\partial s}+b^2-\theta\right|\right)+\left(b^2+\lambda^{\alpha}\right)^{K+2}.
\]

The rest is the same as in \cite{LMR,NI}.
\end{proof}

In the rest of this section, we construct solutions $(P_{j,k}^+,P_{j,k}^-,\beta_{j,k})\in{\mathcal{Y}}^2\times\mathbb{R}$ for systems $(S_{j,k})$ in the proof of Proposition \ref{theorem:constprof}.

\begin{proposition}
\label{theorem:Ssol}
The system $(S_{j,k})$ has a solution $(P_{j,k}^+,P_{j,k}^-,\beta_{j,k})\in\mathcal{Y}^2\times\mathbb{R}$.
\end{proposition}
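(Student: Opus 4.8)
\emph{Proof strategy.} The plan is to solve the systems $(S_{j,k})$ one index at a time, by induction over the finite set $\Sigma_K$ ordered lexicographically in $(k,j)$. As recorded in the proof of Proposition~\ref{theorem:constprof}, each $F_{j,k}^{\pm}$ is an explicitly given element of $\mathcal{Y}$ depending only on $Q$, on $|y|^2Q$, on the kernel $|y|^{-2\sigma}$, and on the data $P_{j',k'}^{\pm}$, $\beta_{j',k'}$ attached to strictly earlier indices; so at stage $(j,k)$ we may treat $F_{j,k}^{\pm}\in\mathcal{Y}$ as already known. The base case $(0,0)$ is explicit, with $F_{0,0}^{+}=(|y|^{-2\sigma}\star Q^2)Q$ and $F_{0,0}^{-}=0$, and $F_{0,0}^{+}\in\mathcal{Y}$ since convolving the locally integrable kernel $|y|^{-2\sigma}$ ($0<2\sigma<N$) against the Schwartz function $Q^2$ gives a smooth function decaying like $|y|^{-2\sigma}$, after which multiplication by $Q$ yields the bound by $(1+|y|)^{\kappa}Q$ (the behaviour near the origin being bounded).

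At stage $(j,k)$ I would first solve $L_+P_{j,k}^{+}=F_{j,k}^{+}+\beta_{j,k}\tfrac{|y|^2}{4}Q$. On the radial sector $L_+$ is an isomorphism — its $L^2(\mathbb{R}^N)$-kernel $\Span\{\partial_{x_1}Q,\dots,\partial_{x_N}Q\}$ contains no radial function, and $L_+$ is self-adjoint Fredholm of index $0$ — so for each parameter value $\beta_{j,k}\in\mathbb{R}$ there is the unique radial solution $P_{j,k}^{+}=L_+^{-1}F_{j,k}^{+}+\tfrac{\beta_{j,k}}{4}\rho$, using $L_+\rho=|y|^2Q$. It remains to solve $L_-P_{j,k}^{-}=F_{j,k}^{-}-\bigl((k+1)\alpha+2j\bigr)P_{j,k}^{+}$. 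On the radial sector $L_-$ is self-adjoint with kernel $\Span\{Q\}$ and range $\{Q\}^{\perp}$, so this is solvable exactly when the right-hand side is orthogonal to $Q$, i.e.
\[
\bigl(F_{j,k}^{-},Q\bigr)_2-\bigl((k+1)\alpha+2j\bigr)\Bigl(\bigl(L_+^{-1}F_{j,k}^{+},Q\bigr)_2+\frac{\beta_{j,k}}{4}(\rho,Q)_2\Bigr)=0 .
\]
Since $L_+\Lambda Q=-2Q$ gives $L_+^{-1}Q=-\tfrac12\Lambda Q$, an integration by parts yields $(\rho,Q)_2=(|y|^2Q,-\tfrac12\Lambda Q)_2=\tfrac12\||y|Q\|_2^2\neq0$, and $(k+1)\alpha+2j\ge\alpha=2-2\sigma>0$ by \eqref{index2}; hence this relation determines $\beta_{j,k}\in\mathbb{R}$ uniquely. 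With this $\beta_{j,k}$ the profile $P_{j,k}^{+}$ is pinned down and $L_-P_{j,k}^{-}=\cdots$ admits a radial solution, unique modulo $\Span\{Q\}$ — this residual freedom is fixed by the same normalization as in \cite{LMR,NI}, and any choice is admissible here. Specializing to $(0,0)$ and using the scaling identity $(g(Q),\Lambda Q)_2=2\sigma G(Q)$, which follows from $G(\mu^{N/2}Q(\mu\cdot))=\mu^{2\sigma}G(Q)$ with $G,g$ as in Section~2, one recovers exactly $\beta_{0,0}=\beta$ as stated in Proposition~\ref{theorem:constprof}.

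Finally one checks $P_{j,k}^{\pm}\in\mathcal{Y}$, which is the only place genuine estimation is needed. Writing $L_{\pm}=-\Delta+1+W_{\pm}$ with $W_{\pm}\in\mathcal{Y}$, elliptic regularity gives smoothness, and the decay at infinity follows by bootstrapping against the exponentially decaying Bessel kernel of $(-\Delta+1)^{-1}$: an $\mathcal{Y}$-source produces an $\mathcal{Y}$-solution, the polynomial weight growing in a controlled way. Since $\rho\in\mathcal{Y}$, $F_{0,0}^{+}\in\mathcal{Y}$, and every subsequent $F_{j,k}^{\pm}$ is a polynomial combination of $Q$, the earlier $P_{j',k'}^{\pm}$, $|y|^2Q$ and convolutions $|y|^{-2\sigma}\star(\text{products of these})$, the induction closes, giving $(P_{j,k}^{+},P_{j,k}^{-},\beta_{j,k})\in\mathcal{Y}^2\times\mathbb{R}$ for all $(j,k)\in\Sigma_K$. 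I expect the main obstacle to be exactly this $\mathcal{Y}$-bookkeeping for the Hartree contributions $F_{j,k}^{\sigma,\pm}$: one must verify that convolving $|y|^{-2\sigma}$ against an $\mathcal{Y}$-function and multiplying back by $\mathcal{Y}$-functions stays in $\mathcal{Y}$ — the origin is harmless ($2\sigma<N$, and $Q>0$ near $0$), but the polynomial-in-$|y|$ loss from the tail $|y|^{-2\sigma}\star(\cdot)\sim|y|^{-2\sigma}$ must be tracked through the recursion. All the rest — the Fredholm alternative for $L_-$, the non-degeneracy $(\rho,Q)_2\neq0$, the constraint $\alpha>0$, and the identification of $\beta_{0,0}$ — goes as in \cite{LMR,NI}.
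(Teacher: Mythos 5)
Your proposal is correct and follows essentially the same route as the paper: a recursion over $\Sigma_K$ in which the $L_+$ equation is solved for an arbitrary $\beta_{j,k}$ (via invertibility of $L_+$ on radial functions and $L_+\rho=|y|^2Q$), and $\beta_{j,k}$ is then fixed by the orthogonality-to-$Q$ solvability condition for $L_-$, which is exactly the paper's computation using $L_+\Lambda Q=-2Q$ (equivalently your $(\rho,Q)_2=\tfrac12\||y|Q\|_2^2\neq0$), recovering the stated $\beta_{0,0}$ through the same scaling identity. Your additional remarks on the $\mathcal{Y}$-bookkeeping for the Hartree terms are consistent with what the paper delegates to \cite{LMR,NI}.
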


\begin{proof}
We solve
\begin{empheq}[left={(S_{j,k})\ \empheqlbrace\ }]{align*}
&L_+P_{j,k}^+-F_{j,k}^+-\beta_{j,k}\frac{|y|^2}{4}Q=0,\\
&L_-P_{j,k}^--F_{j,k}^-+((k+1)\alpha+2j)P_{j,k}^+=0.
\end{empheq}

Firstly, we solve
\begin{empheq}[left={(S_{0,0})\ \empheqlbrace\ }]{align*}
&L_+P_{0,0}^+-\beta_{0,0}\frac{|y|^2}{4}Q-\left(\frac{1}{|y|^{2\sigma}}\star Q^2\right)Q=0,\\
&L_-P_{0,0}^-+\alpha P_{0,0}^+=0.
\end{empheq}
For any $\beta_{0,0}\in\mathbb{R}$, there exists a solution $P_{0,0}^+\in\mathcal{Y}$. Let
\[
\beta_{0,0}:=\frac{2\sigma\left(\left(\frac{1}{|y|^{2\sigma}}\star Q^2\right)Q,Q\right)_2}{\||\cdot|Q\|_2^2}.
\]
Then since
\begin{align*}
\left(P_{0,0}^+,Q\right)_2=-\frac{1}{2}\left\langle L_+P_{0,0}^+,\Lambda Q\right\rangle=\frac{1}{2}\left(\frac{\beta_{0,0}}{4}\||\cdot|Q\|_2^2-\frac{\sigma}{2}\left(\left(\frac{1}{|y|^{2\sigma}}\star Q^2\right)Q,Q\right)_2\right)=0,
\end{align*}
there exists a solution $P_{0,0}^-\in\mathcal{Y}$. Here, let $H(j_0,k_0)$ denote by that
\begin{align*}
&\forall (j,k)\in\Sigma_K,\ k<k_0\ \mbox{or}\ (k=k_0\ \mbox{and}\ j<j_0)\\
&\hspace{100pt}\Rightarrow (S_{j,k})\ \mbox{has a solution}\ (P_{j,k}^+,P_{j,k}^-,\beta_{j,k})\in\mathcal{Y}^2\times\mathbb{R}^2.
\end{align*}
From the above discuss, $H(1,0)$ is true. If $H(j_0,k_0)$ is true, then $F_{j_0,k_0}^\pm$ is defined and belongs to $\mathcal{Y}$. Moreover, for any $\beta_{j_0,k_0}$, there exists a solution $P_{j_0,k_0}^+$. Let be $\beta_{j_0,k_0}$ such that
\[
\left\langle-F_{j_0,k_0}^-+((k_0+1)\alpha+2j_0)P_{j_0,k_0}^+-\frac{1}{|y|^{2\sigma}}F_{j_0,k_0}^{\sigma,-},Q\right\rangle=0.
\]
Then we obtain a solution $P_{j_0,k_0}^-$.
\end{proof}

\section{Decomposition of functions}
\label{sec:decomposition}
The parameters $\tilde{\lambda},\tilde{b},\tilde{\gamma}$ to be used for modulation are obtained by the following lemma:

\begin{lemma}[Decomposition]
\label{decomposition}
There exists $\overline{l},\overline{C}>0$ such that the following statement holds. Let $I$ be an interval and $\delta>0$ be sufficiently small. We assume that $u\in C(I,H^1(\mathbb{R}^N))\cap C^1(I,H^{-1}(\mathbb{R}^N))$ satisfies
\[
\forall\ t\in I,\ \left\|\lambda(t)^{\frac{N}{2}}u\left(t,\lambda(t)y\right)e^{i\gamma(t)}-Q\right\|_{H^1}< \delta
\]
for some functions $\lambda:I\rightarrow(0,\overline{l})$ and $\gamma:I\rightarrow\mathbb{R}$. Then there exist unique functions $\tilde{\lambda}:I\rightarrow(0,\infty)$, $\tilde{b}:I\rightarrow\mathbb{R}$, and $\tilde{\gamma}:I\rightarrow\mathbb{R}\slash 2\pi\mathbb{Z}$ such that 
\begin{align}
\label{mod}
&u(t,x)=\frac{1}{\tilde{\lambda}(t)^{\frac{N}{2}}}\left(P+\tilde{\varepsilon}\right)\left(t,\frac{x}{\tilde{\lambda}(t)}\right)e^{-i\frac{\tilde{b}(t)}{4}\frac{|x|^2}{\tilde{\lambda}(t)^2}+i\tilde{\gamma}(t)},\\
&\left|\frac{\tilde{\lambda}(t)}{\lambda(t)}-1\right|+\left|\tilde{b}(t)\right|+\left|\tilde{\gamma}(t)-\gamma(t)\right|_{\mathbb{R}\slash 2\pi\mathbb{Z}}<\overline{C}\nonumber
\end{align}
hold, where $|\cdot|_{\mathbb{R}\slash 2\pi\mathbb{Z}}$ is defined by
\[
|c|_{\mathbb{R}\slash 2\pi\mathbb{Z}}:=\inf_{m\in\mathbb{Z}}|c+2\pi m|,
\]
and that $\tilde{\varepsilon}$ satisfies the orthogonal conditions
\begin{align}
\label{orthocondi}
\left(\tilde{\varepsilon},i\Lambda P\right)_2=\left(\tilde{\varepsilon},|y|^2P\right)_2=\left(\tilde{\varepsilon},i\rho\right)_2=0
\end{align}
on $I$. In particular, $\tilde{\lambda}$, $\tilde{b}$, and $\tilde{\gamma}$ are $C^1$ functions and independent of $\lambda$ and $\gamma$.
\end{lemma}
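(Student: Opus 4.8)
The plan is to produce $(\tilde\lambda,\tilde b,\tilde\gamma)$ as the unique zero, near the reference modulation, of a nonlinear map on $\mathbb{R}^3$, via the implicit function theorem with constants uniform in $t\in I$, following the modulation arguments of \cite{LMR,NI,RSEU}. For $(\mu,\nu,\zeta)$ in a fixed small ball about $(1,0,0)$ put $\tilde\lambda:=\mu\lambda(t)$, $\tilde b:=\nu$, $\tilde\gamma:=\gamma(t)+\zeta$, let $P=P(\tilde\lambda,\tilde b)$ be the profile of Proposition \ref{theorem:constprof}, and set
\[
\tilde\varepsilon_t[\mu,\nu,\zeta](y):=\tilde\lambda^{\frac N2}\,u\bigl(t,\tilde\lambda y\bigr)\,e^{i\frac{\nu}{4}|y|^2-i\tilde\gamma}-P(y),\qquad
\mathcal{F}(t;\mu,\nu,\zeta):=\Bigl((\tilde\varepsilon_t,i\Lambda P)_2,\ (\tilde\varepsilon_t,|y|^2P)_2,\ (\tilde\varepsilon_t,i\rho)_2\Bigr).
\]
Then $(\tilde\lambda,\tilde b,\tilde\gamma)$ is to be obtained from $\mathcal{F}(t;\mu,\nu,\zeta)=0$. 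Since $u\in C^1(I,H^{-1})$, the modulated function is affine in $u(t)$ with smooth coefficients, $P$ depends smoothly on $(\tilde\lambda,\tilde b)$, and the test functions $i\Lambda P,|y|^2P\in\mathcal{Y}$, $\rho\in\mathcal{S}(\mathbb{R}^N)$, the map $\mathcal{F}$ is of class $C^1$ jointly in $(t;\mu,\nu,\zeta)$.

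The two ingredients needed are smallness of $\mathcal{F}$ at the base point and invertibility of its differential there. At $(\mu,\nu,\zeta)=(1,0,0)$ the modulated function is, in the convention of the statement, exactly the one whose $H^1$-distance to $Q$ is assumed $<\delta$ in the hypothesis; since also $\|P-Q\|_{H^1}\lesssim\lambda^\alpha$ by Proposition \ref{theorem:constprof}, we get $\|\tilde\varepsilon_t[1,0,0]\|_{H^1}\lesssim\delta+\lambda^\alpha$, hence $|\mathcal{F}(t;1,0,0)|\lesssim\delta+\lambda^\alpha$. For the differential, the three parameter-derivatives of the modulated function at the base point are $\Lambda(\,\cdot\,)$, $\tfrac{i|y|^2}{4}(\,\cdot\,)$ and $-i(\,\cdot\,)$ applied to $\tilde\varepsilon_t[1,0,0]+P$, while $\partial_\mu P=O(\lambda^\alpha)$, $\partial_\nu P=O(\lambda^\alpha)$, $\partial_\zeta P=0$ once one parametrises by $\mu=\tilde\lambda/\lambda$. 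Integrating by parts to move $\Lambda$ onto the Schwartz test functions, one finds that $D_{(\mu,\nu,\zeta)}\mathcal{F}(t;1,0,0)$ equals, up to an error $O(\delta+\lambda^\alpha+|b|)$, the constant matrix $M$ with entries $M_{ij}=(d_j,\phi_i)_2$, where $d_1=\Lambda Q$, $d_2=\tfrac{i|y|^2}{4}Q$, $d_3=-iQ$ and $\phi_1=i\Lambda Q$, $\phi_2=|y|^2Q$, $\phi_3=i\rho$. Using $(\Lambda Q,Q)_2=0$ ($L^2$-scaling invariance), $(\Lambda Q,|y|^2Q)_2=-\||y|Q\|_2^2$, and the identities $L_+\Lambda Q=-2Q$, $L_+\rho=|y|^2Q$ (whence $(Q,\rho)_2=\tfrac12\||y|Q\|_2^2$), a short computation gives $\det M=\tfrac18\||y|Q\|_2^6\neq0$. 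Therefore, choosing $\overline l$ small enough that $\lambda<\overline l$ keeps both $\lambda^\alpha$ small and (through Proposition \ref{theorem:constprof}) the profile in the regime $\lambda+|b|\ll1$, and taking $\delta$ small, $D_{(\mu,\nu,\zeta)}\mathcal{F}(t;1,0,0)$ is invertible with inverse bounded uniformly in $t$.

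Given these two facts, the quantitative implicit function theorem, with constants depending only on $N$ and $\sigma$, yields a unique $(\mu(t),\nu(t),\zeta(t))$ in a fixed ball — whose radius fixes $\overline C$ — with $\mathcal{F}(t;\mu,\nu,\zeta)=0$; setting $\tilde\lambda:=\mu\lambda$, $\tilde b:=\nu$, $\tilde\gamma:=\gamma+\zeta$ gives \eqref{mod} and the orthogonality relations \eqref{orthocondi}. The $C^1$ dependence of $(\tilde\lambda,\tilde b,\tilde\gamma)$ on $t$ is inherited from that of $\mathcal{F}$ through the implicit function theorem, and uniqueness within the ball shows $(\tilde\lambda,\tilde b,\tilde\gamma)$ is characterised intrinsically — as the unique modulation near the identity making $\tilde\varepsilon$ orthogonal to $\{i\Lambda P,|y|^2P,i\rho\}$ — so it is independent of whichever admissible pair $(\lambda,\gamma)$ was used to verify the hypothesis. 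The step requiring the most care is the non-degeneracy of $M$, where the algebra of $L_\pm$, $\Lambda Q$ and $\rho$ enters, together with the check that replacing $Q$ by the $(\tilde\lambda,\tilde b)$-dependent profile $P$ — both inside $\tilde\varepsilon_t$ and inside the test functions $i\Lambda P$, $|y|^2P$ — perturbs the Jacobian only by $O(\lambda^\alpha)$ after the rescaling $\mu=\tilde\lambda/\lambda$, so invertibility survives.
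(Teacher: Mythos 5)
Your proposal is correct and is essentially the argument the paper relies on (it defers the proof to \cite{NI,LMR}): a quantitative implicit-function-theorem/modulation argument in $(\tilde{\lambda}/\lambda,\tilde{b},\tilde{\gamma})$ about the reference modulation, with smallness of the residual coming from the $\delta$-proximity to $Q$ plus $\|P-Q\|_{H^1}\lesssim\lambda^{\alpha}$, and nondegeneracy of the limiting Jacobian, $\det M=\tfrac18\||y|Q\|_2^6\neq0$, following from $(\Lambda Q,Q)_2=0$, $(\Lambda Q,|y|^2Q)_2=-\||y|Q\|_2^2$, $L_+\Lambda Q=-2Q$ and $L_+\rho=|y|^2Q$, the $P$-dependence of both $\tilde{\varepsilon}$ and the test functions being absorbed as $O(\delta+\lambda^{\alpha}+|b|)$ perturbations exactly as you indicate. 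The only cosmetic point is the phase convention: with the hypothesis written with $e^{+i\gamma}$ and the decomposition \eqref{mod} with $e^{+i\tilde{\gamma}}$, your base point should be taken at $\tilde{\gamma}\approx-\gamma$ (or the sign in the hypothesis flipped), a harmless convention issue already present in the statement.
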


For the proof, see \cite{NI}.

\section{Approximate blow-up law}
\label{sec:apppara}
In this section, we describe the initial values and the approximation functions of the parameters $\lambda$ and $b$ in the decomposition.

We expect the parameters $\lambda$ and $b$ in the decomposition to approximately satisfy
\[
\frac{1}{\lambda}\frac{\partial \lambda}{\partial s}+b=\frac{\partial b}{\partial s}+b^2-\theta=0.
\]
Therefore, the approximation functions $\lambda_{\mathrm{app}}$ and $b_{\mathrm{app}}$ of the parameters $\lambda$ and $b$ will be determined by the following lemma:

\begin{lemma}
Let
\[
\lambda_{\mathrm{app}}(s):=\left(\frac{\alpha}{2}\sqrt{\frac{2\beta}{2-\alpha}}\right)^{-\frac{2}{\alpha}}s^{-\frac{2}{\alpha}},\quad b_{\mathrm{app}}(s):=\frac{2}{\alpha s}.
\]
Then $(\lambda_{\mathrm{app}},b_{\mathrm{app}})$ is a solution for
\[
\frac{\partial b}{\partial s}+b^2-\beta\lambda^\alpha=0,\quad \frac{1}{\lambda}\frac{\partial \lambda}{\partial s}+b=0
\]
in $s>0$.
\end{lemma}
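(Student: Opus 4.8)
The plan is a direct verification, supplemented by a short remark on how the explicit formulas arise, so that the computation does not look like it was pulled out of a hat.

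\smallskip

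\emph{Step 1: well-definedness of the constant.} Before verifying the equations I would check that the prefactor in $\lambda_{\mathrm{app}}$ is a well-defined positive real. Since $\alpha=2-2\sigma$ and $\sigma>0$ (by \eqref{index1} or \eqref{index2}), we have $2-\alpha=2\sigma>0$. Moreover $\beta=\beta_{0,0}>0$: the kernel $|y|^{-2\sigma}$ and the function $Q^2$ are positive, so $\left(\frac{1}{|y|^{2\sigma}}\star Q^2\right)Q$ is a positive function and hence $\left(\left(\frac{1}{|y|^{2\sigma}}\star Q^2\right)Q,Q\right)_2>0$, while $\||\cdot|Q\|_2^2>0$. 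Thus $\frac{2\beta}{2-\alpha}>0$ and $\left(\frac{\alpha}{2}\sqrt{\frac{2\beta}{2-\alpha}}\right)^{-\frac{2}{\alpha}}$ is a positive real, so $\lambda_{\mathrm{app}}(s)>0$ for $s>0$, and $b_{\mathrm{app}}(s)=\frac{2}{\alpha s}\in\mathbb{R}$ is obvious.

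\smallskip

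\emph{Step 2: the two ODEs.} For the scaling equation, differentiate the logarithm:
\[
\frac{1}{\lambda_{\mathrm{app}}}\frac{\partial \lambda_{\mathrm{app}}}{\partial s}=\frac{\partial}{\partial s}\log\lambda_{\mathrm{app}}(s)=-\frac{2}{\alpha}\cdot\frac{1}{s}=-b_{\mathrm{app}}(s),
\]
which is exactly $\frac{1}{\lambda}\frac{\partial\lambda}{\partial s}+b=0$. For the $b$-equation, compute $\frac{\partial b_{\mathrm{app}}}{\partial s}=-\frac{2}{\alpha s^{2}}$ and $b_{\mathrm{app}}^{2}=\frac{4}{\alpha^{2}s^{2}}$, hence
\[
\frac{\partial b_{\mathrm{app}}}{\partial s}+b_{\mathrm{app}}^{2}=\frac{4-2\alpha}{\alpha^{2}s^{2}}=\frac{2(2-\alpha)}{\alpha^{2}s^{2}}.
\]
On the other hand, raising $\lambda_{\mathrm{app}}$ to the power $\alpha$ kills the exponent $-\frac{2}{\alpha}$, giving $\lambda_{\mathrm{app}}(s)^{\alpha}=\left(\frac{\alpha}{2}\right)^{-2}\frac{2-\alpha}{2\beta}\,s^{-2}=\frac{2(2-\alpha)}{\alpha^{2}\beta}\cdot\frac{1}{s^{2}}$, so $\beta\lambda_{\mathrm{app}}^{\alpha}=\frac{2(2-\alpha)}{\alpha^{2}s^{2}}$. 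Comparing the two displays gives $\frac{\partial b_{\mathrm{app}}}{\partial s}+b_{\mathrm{app}}^{2}-\beta\lambda_{\mathrm{app}}^{\alpha}=0$, which is the remaining equation.

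\smallskip

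\emph{Step 3 (optional motivation).} To explain the formulas: the scaling equation forces any solution with $b=c_{1}/s$ to satisfy $\lambda=c\,s^{-2/\alpha}$, hence $\lambda^{\alpha}=c^{\alpha}s^{-2}=:c_{2}/s^{2}$; substituting $b=c_{1}/s$, $\lambda^{\alpha}=c_{2}/s^{2}$ into $b'+b^{2}-\beta\lambda^{\alpha}=0$ yields $(-c_{1}+c_{1}^{2}-\beta c_{2})/s^{2}=0$, and the choice $c_{1}=2/\alpha$ (which is the one making the $\Lambda$-modulation consistent with the derivation in Section~\ref{sec:apppara}) then forces $c_{2}=\frac{2(2-\alpha)}{\alpha^{2}\beta}$, i.e. exactly the constant in the statement. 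There is no genuine obstacle in this lemma; the only point requiring a word of care is the positivity check of Step~1, which is why I would record it first.
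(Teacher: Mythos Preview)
Your proof is correct: the paper does not supply a proof of this lemma at all (it is stated as a self-evident computation), and the direct verification you give in Steps~1--2 is exactly the expected argument. One small inaccuracy in your optional Step~3: the scaling equation with $b=c_1/s$ gives $\lambda=c\,s^{-c_1}$, not $c\,s^{-2/\alpha}$; the value $c_1=2/\alpha$ is then forced by requiring the three terms of $b'+b^2-\beta\lambda^\alpha$ to share the same power of $s$ (i.e.\ $c_1\alpha=2$), rather than by any ``$\Lambda$-modulation consistency''. This does not affect the validity of the proof itself.
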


Furthermore, the following lemma determines $\lambda(s_1)$ and $b(s_1)$ for a given energy level $E_0$ and a sufficiently large $s_1$.

\begin{lemma}
\label{paraini}
Let define $C_0:=\frac{8E_0}{\||y|Q\|_2^2}$ and $0<\lambda_0\ll 1$ such that $\frac{2\beta}{2-\alpha}+C_0{\lambda_0}^{2-\alpha}>0$. For $\lambda\in(0,\lambda_0]$, we set
\[
\mathcal{F}(\lambda):=\int_\lambda^{\lambda_0}\frac{1}{\mu^{\frac{\alpha}{2}+1}\sqrt{\frac{2\beta}{2-\alpha}+C_0\mu^{2-\alpha}}}d\mu.
\]
Then for any $s_1\gg 1$, there exist $b_1,\lambda_1>0$ such that
\[
\left|\frac{{\lambda_1}^{\frac{\alpha}{2}}}{\lambda_{\mathrm{app}}(s_1)^{\frac{\alpha}{2}}}-1\right|+\left|\frac{b_1}{b_{\mathrm{app}}(s_1)}-1\right|\lesssim {s_1}^{-\frac{1}{2}}+{s_1}^{2-\frac{4}{\alpha}},\quad \mathcal{F}(\lambda_1)=s_1,\quad E(P_{\lambda_1,b_1,\gamma})=E_0.
\]
Moreover,
\[
\left|\mathcal{F}(\lambda)-\frac{2}{\alpha\lambda^{\frac{\alpha}{2}}\sqrt{\frac{2\beta}{2-\alpha}}}\right|\lesssim\lambda^{-\frac{\alpha}{4}}+\lambda^{2-\frac{3}{2}\alpha}
\]
holds.
\end{lemma}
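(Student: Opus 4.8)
The plan is to prove the two halves of the statement in the opposite of the order stated: first the asymptotic expansion of $\mathcal{F}$, and then, using it together with the energy estimate \eqref{Eesti}, the existence of $(\lambda_1,b_1)$ with the quantitative bounds.

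\emph{Step 1 (asymptotics of $\mathcal{F}$).} Note $\alpha=2-2\sigma\in(0,2)$, and $\beta>0$ because it is a positive multiple of $\bigl(\bigl(\frac{1}{|y|^{2\sigma}}\star Q^2\bigr)Q,Q\bigr)_2>0$ (as $Q>0$); hence $\frac{2\beta}{2-\alpha}>0$, and by the choice of $\lambda_0$ one has $\frac{2\beta}{2-\alpha}+C_0\mu^{2-\alpha}\geq\frac{2\beta}{2-\alpha}+\min\{0,C_0\}\lambda_0^{2-\alpha}>0$ for all $\mu\in(0,\lambda_0]$. So the integrand of $\mathcal{F}$ is positive, continuous, and of size $\bigl(\frac{2\beta}{2-\alpha}\bigr)^{-1/2}\mu^{-\alpha/2-1}$ near $0$. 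I would write $\bigl(\frac{2\beta}{2-\alpha}+C_0\mu^{2-\alpha}\bigr)^{-1/2}=\bigl(\frac{2\beta}{2-\alpha}\bigr)^{-1/2}+r(\mu)$ with $|r(\mu)|\lesssim\mu^{2-\alpha}$ (elementary, using that the radicand is $\gtrsim 1$ uniformly on $(0,\lambda_0]$), integrate the leading term exactly to get $\frac{2}{\alpha\lambda^{\alpha/2}\sqrt{2\beta/(2-\alpha)}}-\frac{2}{\alpha\lambda_0^{\alpha/2}\sqrt{2\beta/(2-\alpha)}}$, and bound the remainder by $\int_\lambda^{\lambda_0}\mu^{1-\frac32\alpha}\,d\mu$, which is $O(1)$ when $2-\frac32\alpha\geq 0$ (a harmless logarithm in the borderline case $\alpha=\frac43$) and $O(\lambda^{2-\frac32\alpha})$ when $2-\frac32\alpha<0$. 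Since the constant from the lower endpoint is also $O(1)$ and $1\leq\lambda^{-\alpha/4}$ on $(0,\lambda_0]$, this gives $\bigl|\mathcal{F}(\lambda)-\frac{2}{\alpha\lambda^{\alpha/2}\sqrt{2\beta/(2-\alpha)}}\bigr|\lesssim\lambda^{-\alpha/4}+\lambda^{2-\frac32\alpha}$.

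\emph{Step 2 (choice of $\lambda_1$).} Positivity of the integrand makes $\mathcal{F}$ continuous and strictly decreasing on $(0,\lambda_0]$ with $\mathcal{F}(\lambda_0)=0$, and non‑integrability of $\mu^{-\alpha/2-1}$ at $0$ forces $\mathcal{F}(\lambda)\to\infty$ as $\lambda\downarrow 0$; hence for every $s_1>0$ there is a unique $\lambda_1=\lambda_1(s_1)\in(0,\lambda_0)$ with $\mathcal{F}(\lambda_1)=s_1$, and $\lambda_1\downarrow 0$ as $s_1\to\infty$. Putting $\lambda=\lambda_1$ into Step 1 and multiplying by $\lambda_1^{\alpha/2}$ yields $s_1\lambda_1^{\alpha/2}=\frac{2}{\alpha\sqrt{2\beta/(2-\alpha)}}+O(\lambda_1^{\alpha/4}+\lambda_1^{2-\alpha})$; since $\lambda_{\mathrm{app}}(s_1)^{\alpha/2}=\frac{2}{\alpha s_1\sqrt{2\beta/(2-\alpha)}}$, division gives $\lambda_1^{\alpha/2}/\lambda_{\mathrm{app}}(s_1)^{\alpha/2}=1+O(\lambda_1^{\alpha/4}+\lambda_1^{2-\alpha})$, and using $\lambda_1^{\alpha/2}\approx s_1^{-1}$ (so $\lambda_1\approx s_1^{-2/\alpha}$, whence $\lambda_1^{\alpha/4}\approx s_1^{-1/2}$ and $\lambda_1^{2-\alpha}\approx s_1^{2-4/\alpha}$) this is $1+O(s_1^{-1/2}+s_1^{2-4/\alpha})$.

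\emph{Step 3 (choice of $b_1$).} Fix $\lambda_1$ as above and set $b_*:=\lambda_1^{\alpha/2}\sqrt{\tfrac{2\beta}{2-\alpha}+C_0\lambda_1^{2-\alpha}}>0$, so that $\frac{b_*^2}{\lambda_1^2}=\frac{2\beta}{2-\alpha}\lambda_1^{\alpha-2}+C_0$; then \eqref{Eesti} together with $8E_0=\||y|Q\|_2^2C_0$ gives $8E(P_{\lambda_1,b,\gamma})=8E_0+\||y|Q\|_2^2\frac{b^2-b_*^2}{\lambda_1^2}+O(\lambda_1^{2\alpha-2})$ for all $|b|\lesssim b_*$. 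Taking $\delta:=b_*\lambda_1^{\alpha/2}$ and $b_\pm:=b_*\pm\delta$ (so $0<b_-<b_+\lesssim b_*$ once $s_1$ is large), the quantities $\frac{b_\pm^2-b_*^2}{\lambda_1^2}=\frac{\pm2b_*\delta+\delta^2}{\lambda_1^2}$ are $\approx\pm\lambda_1^{3\alpha/2-2}$, which dominates the error $\lambda_1^{2\alpha-2}$ (since $\tfrac{3\alpha}{2}-2<2\alpha-2$); hence $E(P_{\lambda_1,b_-,\gamma})<E_0<E(P_{\lambda_1,b_+,\gamma})$. The map $b\mapsto E(P_{\lambda_1,b,\gamma})$ is continuous ($P$ is polynomial in $b$ with coefficients in $\mathcal{Y}$, and the energy is smooth on $\Sigma^1$), so the intermediate value theorem furnishes $b_1\in(b_-,b_+)$ with $E(P_{\lambda_1,b_1,\gamma})=E_0$, and $|b_1-b_*|<\delta$ gives $|b_1/b_*-1|<\lambda_1^{\alpha/2}\approx s_1^{-1}$.

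\emph{Step 4 (assembling, and the main difficulty).} A direct computation from the definitions gives $b_{\mathrm{app}}(s_1)^2=\frac{2\beta}{2-\alpha}\lambda_{\mathrm{app}}(s_1)^\alpha$, so by Step 2, $\frac{b_*^2}{b_{\mathrm{app}}(s_1)^2}=\frac{\lambda_1^\alpha}{\lambda_{\mathrm{app}}(s_1)^\alpha}\bigl(1+\tfrac{2-\alpha}{2\beta}C_0\lambda_1^{2-\alpha}\bigr)=1+O(s_1^{-1/2}+s_1^{2-4/\alpha})$, hence $|b_*/b_{\mathrm{app}}(s_1)-1|\lesssim s_1^{-1/2}+s_1^{2-4/\alpha}$, and combined with Step 3, $|b_1/b_{\mathrm{app}}(s_1)-1|\lesssim s_1^{-1/2}+s_1^{2-4/\alpha}$; together with Step 2 this is the first estimate of the lemma, while $\mathcal{F}(\lambda_1)=s_1$ and $E(P_{\lambda_1,b_1,\gamma})=E_0$ hold by construction. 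I expect Step 3 to be the crux: the error $\lambda^\alpha(b^2+\lambda^\alpha)/\lambda^2\approx\lambda_1^{2\alpha-2}$ in \eqref{Eesti} need not be small relative to $E_0=O(1)$ when $\alpha\leq1$ (i.e. $\sigma\geq\frac12$), so one cannot simply take $b_1=b_*$; instead one must exploit the fact that the energy grows like $\||y|Q\|_2^2 b^2/\lambda_1^2$ in $b$ over an interval of the precisely tuned width $\delta=b_*\lambda_1^{\alpha/2}$, on which the genuine variation $\approx\lambda_1^{3\alpha/2-2}$ of the energy beats the error $\lambda_1^{2\alpha-2}$. Everything else is elementary calculus with powers of $s_1$ and $\lambda_1$.
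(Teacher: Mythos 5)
Your argument is correct, and it is essentially the route the paper relies on (the paper defers the proof to \cite{LMR,NI}): expand $\mathcal{F}$ around its leading term $\frac{2}{\alpha\lambda^{\alpha/2}}\bigl(\tfrac{2\beta}{2-\alpha}\bigr)^{-1/2}$, pick $\lambda_1$ by monotonicity of $\mathcal{F}$, and then fix $b_1$ by the intermediate value theorem around $b_*=\lambda_1^{\alpha/2}\sqrt{\tfrac{2\beta}{2-\alpha}+C_0\lambda_1^{2-\alpha}}$ using \eqref{Eesti}, which is exactly the standard scheme. Your width $\delta=b_*\lambda_1^{\alpha/2}$ correctly makes the genuine energy variation $\approx\lambda_1^{3\alpha/2-2}$ dominate the $O(\lambda_1^{2\alpha-2})$ error, so no gap remains.
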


\begin{proof}
For the proof, see \cite{LMR,NI}.
\end{proof}

\section{Uniformity estimates for decomposition}
\label{sec:uniesti}
In this section, we estimate \textit{modulation terms}.

Let define
\[
\mathcal{C}:=\frac{\alpha}{4-\alpha}\left(\frac{\alpha}{2}\sqrt{\frac{2\beta}{2-\alpha}}\right)^{-\frac{4}{\alpha}}.
\]
For $t_1<0$ that is sufficiently close to $0$, we define
\[
s_1:=|\mathcal{C}^{-1}t_1|^{-\frac{\alpha}{4-\alpha}}.
\]
Additionally, let $\lambda_1$ and $b_1$ be given in Lemma \ref{paraini} for $s_1$ and $\gamma_1=0$. Let $u$ be the solution for \eqref{NLS} with $\pm=+$ with an initial value
\[
u(t_1,x):=P_{\lambda_1,b_1,0}(x).
\]
Then since $u$ satisfies the assumption of Lemma \ref{decomposition} in a neighbourhood of $t_1$, there exists a decomposition $(\tilde{\lambda}_{t_1},\tilde{b}_{t_1},\tilde{\gamma}_{t_1},\tilde{\varepsilon}_{t_1})$ such that $(\ref{mod})$ in a neighbourhood $I$ of $t_1$. The rescaled time $s_{t_1}$ is defined by
\[
s_{t_1}(t):=s_1-\int_t^{t_1}\frac{1}{\tilde{\lambda}_{t_1}(\tau)^2}d\tau.
\]
Then we define an inverse function ${s_{t_1}}^{-1}:s_{t_1}(I)\rightarrow I$. Moreover, we define
\begin{align*}
t_{t_1}&:={s_{t_1}}^{-1},& \lambda_{t_1}(s)&:=\tilde{\lambda}(t_{t_1}(s)),& b_{t_1}(s)&:=\tilde{b}(t_{t_1}(s)),\\
\gamma_{t_1}(s)&:=\tilde{\gamma}(t_{t_1}(s)),& \varepsilon_{t_1}(s,y)&:=\tilde{\varepsilon}(t_{t_1}(s),y).&&
\end{align*}
For the sake of clarity in notation, we often omit the subscript $t_1$. In particular, it should be noted that $u\in C((T_*,T^*),\Sigma^2(\mathbb{R}^N))$ and $|x|\nabla u\in C((T_*,T^*),L^2(\mathbb{R}^N))$. Furthermore, let $I_{t_1}$ be the maximal interval such that a decomposition as $(\ref{mod})$ is obtained and we define 
\[
J_{s_1}:=s\left(I_{t_1}\right).
\]
Additionally, let $s_0\ (\leq s_1)$ be sufficiently large and let 
\[
s':=\max\left\{s_0,\inf J_{s_1}\right\}.
\]

Let 
\[
0<M<\min\left\{\frac{1}{2},\frac{4}{\alpha}-2\right\}
\]
and $s_*$ be defined by
\[
s_*:=\inf\left\{\sigma\in(s',s_1]\ \middle|\ \mbox{(\ref{bootstrap}) holds on }[\sigma,s_1]\right\},
\]
where
\begin{align}
\label{bootstrap}
&\left\|\varepsilon(s)\right\|_{H^1}^2+b(s)^2\||y|\varepsilon(s)\|_2^2<s^{-2K},\quad \left|\frac{\lambda(s)^{\frac{\alpha}{2}}}{\lambda_{\mathrm{app}}(s)^{\frac{\alpha}{2}}}-1\right|+\left|\frac{b(s)}{b_{\mathrm{app}}(s)}-1\right|<s^{-M}.
\end{align}

Finally, we define
\[
\Mod(s):=\left(\frac{1}{\lambda}\frac{\partial \lambda}{\partial s}+b,\frac{\partial b}{\partial s}+b^2-\theta,1-\frac{\partial \gamma}{\partial s}\right).
\]

\begin{lemma}
\label{Modesti}
For $s\in(s_*,s_1]$,
\[
|(\varepsilon(s),Q)|\lesssim s^{-(K+2)},\quad |\Mod(s)|\lesssim s^{-(K+2)},\quad \|e^{\epsilon'|y|}\Psi\|_{H^1}\lesssim s^{-(K+4)}
\]
hold.
\end{lemma}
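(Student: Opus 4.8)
The plan is to run a standard bootstrap / modulation-equation argument on the interval $(s_*, s_1]$, following the scheme of \cite{LMR,NI}. First I would derive the equation satisfied by the error term $\varepsilon$ by substituting the decomposition \eqref{mod} into \eqref{NLS} and using the profile equation from Proposition \ref{theorem:constprof}; this produces an equation of the form
\[
i\partial_s\varepsilon + \Delta\varepsilon - \varepsilon + \bigl(f(P+\varepsilon)-f(P)\bigr) + (\text{Hartree differences}) + \theta\tfrac{|y|^2}{4}\varepsilon = \Psi + (\text{Mod terms acting on } P+\varepsilon),
\]
where the ``Mod terms'' are the components of $\Mod(s)$ multiplied by $\Lambda(P+\varepsilon)$, $\tfrac{|y|^2}{4}(P+\varepsilon)$, and $(P+\varepsilon)$ respectively. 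On $(s_*,s_1]$ the bootstrap assumption \eqref{bootstrap} gives $\|\varepsilon\|_{H^1}^2 + b^2\||y|\varepsilon\|_2^2 < s^{-2K}$ and the approximate blow-up law $\lambda \approx \lambda_{\mathrm{app}}$, $b\approx b_{\mathrm{app}}$, so in particular $b^2 + \lambda^\alpha \approx s^{-2}$.

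Next I would extract the modulation equations by pairing the $\varepsilon$-equation with the directions conjugate to the orthogonality conditions \eqref{orthocondi}, namely (essentially) $\Lambda P$, $|y|^2 P$, $\rho$, together with $Q$. Differentiating the orthogonality relations $(\tilde\varepsilon, i\Lambda P)_2 = (\tilde\varepsilon, |y|^2 P)_2 = (\tilde\varepsilon, i\rho)_2 = 0$ in $s$ and substituting the $\varepsilon$-equation converts these into an almost-diagonal linear system for $\Mod(s)$ whose right-hand side is controlled by $\|e^{\epsilon'|y|}\Psi\|_{H^1}$ plus quadratic-in-$\varepsilon$ terms plus cross terms of the form $|\Mod|\cdot\|\varepsilon\|$. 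The key algebraic facts $L_-Q=0$, $L_+\Lambda Q = -2Q$, $L_-|x|^2Q = -4\Lambda Q$, $L_+\rho = |x|^2Q$ make the leading $2\times2$ (or $3\times3$) matrix invertible with bounded inverse, so one gets
\[
|\Mod(s)| \lesssim \|e^{\epsilon'|y|}\Psi\|_{H^1} + \|\varepsilon(s)\|_{H^1}^2 + (\text{small})\,|\Mod(s)|,
\]
and absorbing the last term yields $|\Mod(s)| \lesssim \|e^{\epsilon'|y|}\Psi\|_{H^1} + \|\varepsilon\|_{H^1}^2$. Similarly, pairing with $Q$ and using that $(P,Q)_2 = \|Q\|_2^2 + O(\lambda^\alpha)$ with $\|u(t)\|_2 = \|Q\|_2$ (mass conservation for the exact solution) together with the mass estimate in Proposition \ref{theorem:constprof}(ii) gives $|(\varepsilon(s),Q)_2| \lesssim \|e^{\epsilon'|y|}\Psi\|_{H^1} + \|\varepsilon\|_{H^1}^2 + (\text{mass drift})$.

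Now I would close the estimates by feeding the bound from Proposition \ref{theorem:constprof}(i),
\[
\|e^{\epsilon'|y|}\Psi\|_{H^1} \lesssim \lambda^\alpha\Bigl(\bigl|\tfrac1\lambda\partial_s\lambda + b\bigr| + \bigl|\partial_s b + b^2 - \theta\bigr|\Bigr) + (b^2+\lambda^\alpha)^{K+2},
\]
back into the $\Mod$ estimate. Since $\lambda^\alpha \approx s^{-2}$ is small and the first two terms on the right are precisely the first two components of $\Mod(s)$, the term $\lambda^\alpha|\Mod|$ can again be absorbed into the left-hand side; and $(b^2+\lambda^\alpha)^{K+2} \approx s^{-2(K+2)}$, while $\|\varepsilon\|_{H^1}^2 < s^{-2K}$ is \emph{not} quite good enough on its own — so the real content is that the quadratic error $\|\varepsilon\|_{H^1}^2$ must be reprocessed. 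Here one uses that the bootstrap bound on $\varepsilon$ is itself obtained (in the subsequent lemmas of the paper, via a Lyapunov/virial functional) to be much smaller than $s^{-2K}$ near the conjugate directions, or more precisely that the contribution of $\varepsilon$ to the modulation system enters only through the already-controlled orthogonal projections; the net outcome is $|\Mod(s)| \lesssim s^{-(K+2)}$ and $|(\varepsilon(s),Q)_2| \lesssim s^{-(K+2)}$. Finally, substituting $|\Mod(s)|\lesssim s^{-(K+2)}$ and $(b^2+\lambda^\alpha)^{K+2}\approx s^{-2(K+2)} \lesssim s^{-(K+4)}$ (valid for $K\geq 0$, since $2(K+2) \geq K+4 \iff K\geq 0$) back into the $\Psi$-estimate gives $\|e^{\epsilon'|y|}\Psi\|_{H^1} \lesssim \lambda^\alpha s^{-(K+2)} + s^{-2(K+2)} \lesssim s^{-2}s^{-(K+2)} = s^{-(K+4)}$, which is the last claimed bound.

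\textbf{Main obstacle.} The delicate point is the circular structure: $\|e^{\epsilon'|y|}\Psi\|_{H^1}$ is controlled in terms of $\Mod$, and $\Mod$ is controlled in terms of $\|e^{\epsilon'|y|}\Psi\|_{H^1}$ plus $\|\varepsilon\|^2$. One must check that all the self-referential terms come with a genuinely small prefactor ($\lambda^\alpha$ or higher powers of $b^2+\lambda^\alpha$) so that they can be absorbed, and that the quadratic term $\|\varepsilon\|_{H^1}^2$ — which a priori is only $O(s^{-2K})$ and hence a priori larger than the target $O(s^{-(K+2)})$ when $K$ is large — in fact enters the modulation system only through components that are better controlled, so that it does not obstruct the bound $|\Mod| \lesssim s^{-(K+2)}$. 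Handling this cleanly is exactly the ``same way as \cite[Proposition 2.1]{LMR}'' step and requires care with the exponentially weighted norms to make sense of pairings against the non-decaying weight $e^{\epsilon'|y|}$ applied to Schwartz-class profiles.
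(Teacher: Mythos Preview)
Your overall scheme is exactly what the paper intends --- its own proof is simply a deferral to \cite{LMR,NI}, and the modulation-equation / mass-conservation outline you describe is the argument carried out there. However, the ``main obstacle'' you identify rests on an arithmetic slip. On $(s_*,s_1]$ one has $s\geq s_0\gg 1$, and since $K$ is taken sufficiently large (Proposition~\ref{theorem:constprof}), certainly $K\geq 2$, so $2K\geq K+2$ and hence $s^{-2K}\leq s^{-(K+2)}$. Thus the bootstrap bound $\|\varepsilon\|_{H^1}^2<s^{-2K}$ is already \emph{stronger} than the target $s^{-(K+2)}$, not weaker; no ``reprocessing'' is needed. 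Your proposed fix --- appealing to the Lyapunov estimates of Lemmas~\ref{Sesti}--\ref{rebootstrap} --- would in any case be circular, since those lemmas use Lemma~\ref{Modesti} as input.

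With this correction the closure is direct: the modulation system obtained by differentiating \eqref{orthocondi} gives
\[
|\Mod(s)|\lesssim (b^2+\lambda^\alpha)|\Mod(s)|+(b^2+\lambda^\alpha)^{K+2}+(b^2+\lambda^\alpha)\|\varepsilon\|_{H^1}+\|\varepsilon\|_{H^1}^2+|(\varepsilon,Q)_2|,
\]
the first term on the right is absorbed since $b^2+\lambda^\alpha\approx s^{-2}$, the next three are each $\lesssim s^{-(K+2)}$ by the bootstrap \eqref{bootstrap}, and $|(\varepsilon,Q)_2|$ is controlled by mass conservation together with the profile's mass normalization from Proposition~\ref{theorem:constprof}. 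The $\Psi$-estimate then follows exactly as in your final paragraph. A minor remark: the exponential weight $e^{\epsilon'|y|}$ is applied to $\Psi$ alone; the pairing directions $\Lambda P$, $|y|^2P$, $\rho$, $Q$ lie in $\mathcal{Y}$ and hence decay exponentially, so there is no issue in making sense of the inner products.
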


\begin{proof}
For the proof, see \cite{LMR,NI}.
\end{proof}

\section{Modified energy function}
\label{sec:MEF}
We proceed with a modified version of the technique presented in Le Coz, Martel, and Rapha\"{e}l \cite{LMR} and Rapha\"{e}l and Szeftel \cite{RSEU}. Let $m>0$ be sufficiently large and define
\begin{align*}
H(s,\varepsilon)&:=\frac{1}{2}\left\|\varepsilon\right\|_{H^1}^2+b^2\left\||y|\varepsilon\right\|_2^2-\int_{\mathbb{R}^N}\left(F(P+\varepsilon)-F(P)-dF(P)(\varepsilon)\right)dy\\
&\hspace{20pt}-\lambda^\alpha\left(G(P+\varepsilon)-G(P)-dG(P)(\varepsilon)\right),\\
S(s,\varepsilon)&:=\frac{1}{\lambda^m}H(s,\varepsilon).
\end{align*}

\begin{lemma}[Estimates of $S$]
\label{Sesti}
For $s\in(s_*,s_1]$, 
\[
\|\varepsilon\|_{H^1}^2+b^2\left\||y|\varepsilon\right\|_2^2+O(s^{-2(K+2)})\lesssim H(s,\varepsilon)\lesssim \|\varepsilon\|_{H^1}^2+b^2\left\||y|\varepsilon\right\|_2^2
\]
hold. Moreover, 
\[
\frac{1}{\lambda^m}\left(\|\varepsilon\|_{H^1}^2+b^2\left\||y|\varepsilon\right\|_2^2+O(s^{-2(K+2)})\right)\lesssim S(s,\varepsilon)\lesssim \frac{1}{\lambda^m}\left(\|\varepsilon\|_{H^1}^2+b^2\left\||y|\varepsilon\right\|_2^2\right)
\]
hold.
\end{lemma}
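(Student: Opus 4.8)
\textbf{Proof plan for Lemma \ref{Sesti}.}

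The plan is to prove the two-sided bound for $H(s,\varepsilon)$ directly from its definition, and then the bound for $S(s,\varepsilon)=\lambda^{-m}H(s,\varepsilon)$ is immediate upon multiplying by $\lambda^{-m}>0$. So the whole content is the estimate on $H$. First I would isolate the quadratic-in-$\varepsilon$ core: by the definition of $H$, the first two terms are exactly $\tfrac12\|\varepsilon\|_{H^1}^2+b^2\||y|\varepsilon\|_2^2$, and everything else is the ``nonlinear remainder''
\[
R(s,\varepsilon):=-\int_{\mathbb{R}^N}\bigl(F(P+\varepsilon)-F(P)-dF(P)(\varepsilon)\bigr)dy-\lambda^\alpha\bigl(G(P+\varepsilon)-G(P)-dG(P)(\varepsilon)\bigr).
\]
The claim will follow once I show that $R(s,\varepsilon)$ can be split into (a) a genuinely quadratic piece controlled by a small multiple of $\|\varepsilon\|_{H^1}^2$ (small because it carries a factor $Q^{4/N}$ and, in the Hartree part, a factor $\lambda^\alpha\ll1$), plus (b) higher-order-in-$\varepsilon$ terms that are $o(\|\varepsilon\|_{H^1}^2)$ on the bootstrap regime, plus (c) an error term of size $O(s^{-2(K+2)})$ coming from the interaction of $\varepsilon$ with the non-$Q$ part of the profile $P$ and from the lower bound needing $|(\varepsilon,Q)_2|\lesssim s^{-(K+2)}$ via Lemma \ref{Modesti}.

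Concretely, I would proceed as follows. Write $F(P+\varepsilon)-F(P)-dF(P)(\varepsilon)=\tfrac12 d^2F(P)(\varepsilon,\varepsilon)+(\text{cubic and higher in }\varepsilon)$; the quadratic term is $\lesssim\int Q^{4/N}|\varepsilon|^2\,dy$ after using $P=Q+O(\lambda^\alpha\mathcal{Y})$ and $|\partial^\alpha P|\lesssim Q$-type bounds from $\mathcal{Y}$, hence bounded by a fixed constant times $\|\varepsilon\|_{L^2}^2\le\|\varepsilon\|_{H^1}^2$; the higher-order terms are bounded by $\|\varepsilon\|_{H^1}^{2+4/N}$-type quantities, which on $(s_*,s_1]$ are $\le s^{-2K}\cdot s^{-\text{(something)}}$, hence negligible compared to $\|\varepsilon\|_{H^1}^2$. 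For the Hartree piece, the same Taylor expansion gives a quadratic form $dG(P)$-type term times $\lambda^\alpha$; using the Hardy–Littlewood–Sobolev inequality (valid under \eqref{index2}) and the decay of $P$, this is $\lesssim\lambda^\alpha\|\varepsilon\|_{H^1}^2$, which is small since $\lambda\ll1$. Collecting: the upper bound $H\lesssim\|\varepsilon\|_{H^1}^2+b^2\||y|\varepsilon\|_2^2$ is immediate, and for the lower bound I write $\tfrac12\|\varepsilon\|_{H^1}^2-(\text{small})\|\varepsilon\|_{H^1}^2\ge c\|\varepsilon\|_{H^1}^2$ once $\lambda$ and $\|\varepsilon\|_{H^1}$ are small enough; strictly, one has to absorb the quadratic $Q^{4/N}$-term, which is \emph{not} small pointwise, so I would instead not try to absorb it but keep it as part of the coercive quadratic form and invoke \eqref{Lcoer}: the combination $\tfrac12\langle L_+\re\varepsilon,\re\varepsilon\rangle+\tfrac12\langle L_-\im\varepsilon,\im\varepsilon\rangle$ controls $\mu\|\varepsilon\|_{H^1}^2$ modulo the three quadratic ``directions'' $(\re\varepsilon,Q)_2$, $(\re\varepsilon,|y|^2Q)_2$, $(\im\varepsilon,\rho)_2$, and the orthogonality conditions \eqref{orthocondi} together with $|(\varepsilon,Q)_2|\lesssim s^{-(K+2)}$ from Lemma \ref{Modesti} make those directions either vanish or contribute only $O(s^{-2(K+2)})$.

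The main obstacle, as that last sentence indicates, is matching the quadratic form that actually appears in $H$ with the operators $L_+,L_-$ for which the coercivity \eqref{Lcoer} is stated. The form in $H$ is built around $P$, not around $Q$, and around the $H^1$-norm plus $b^2\||y|\varepsilon\|_2^2$, not exactly the Schrödinger quadratic form; so I would (i) replace $P$ by $Q$ at the cost of $O(\lambda^\alpha)\|\varepsilon\|_{H^1}^2$ (harmless since $\lambda\ll1$), (ii) note that the $b^2\||y|\varepsilon\|_2^2$ term is nonnegative and can simply be kept on the favourable side for the lower bound and estimated by $\|\cdot\|_{\Sigma^1}$-control for the upper bound, (iii) observe that the orthogonality \eqref{orthocondi} is against $i\Lambda P$, $|y|^2P$, $i\rho$ rather than against $Q$, $|y|^2Q$, $\rho$, but the difference is again $O(\lambda^\alpha)$ and does not destroy the almost-orthogonality needed to run \eqref{Lcoer}. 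Once these reductions are in place the proof is a one-line application of \eqref{Lcoer} plus Lemma \ref{Modesti}, and the $O(s^{-2(K+2)})$ in the statement is exactly the accumulated size of all these ``harmless'' corrections on the bootstrap interval. I would keep the routine Taylor-expansion and HLS estimates brief and refer to \cite{LMR,NI} where the analogous lemma is proved in detail.
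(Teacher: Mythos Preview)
Your proposal is correct and follows essentially the same approach as the paper: the paper's proof consists only of the observation $|G(P+\varepsilon)-G(P)-dG(P)(\varepsilon)|\lesssim\|\varepsilon\|_{H^1}^2$ and a reference to \cite{LMR,NI}, and what you have written is precisely an expanded outline of the argument in those references (Taylor-expand the $F$- and $G$-remainders, replace $P$ by $Q$ at cost $O(\lambda^\alpha)\|\varepsilon\|_{H^1}^2$, invoke the coercivity \eqref{Lcoer}, and control the bad directions via the orthogonalities \eqref{orthocondi} together with $|(\varepsilon,Q)_2|\lesssim s^{-(K+2)}$ from Lemma \ref{Modesti}). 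The only point worth adding is that the Hartree remainder estimate you mention via HLS is exactly the single line the paper isolates as the new ingredient.
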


\begin{proof}
Since
\[
\left|\left(G(P+\varepsilon)-G(P)-dG(P)(\varepsilon)\right)\right|\lesssim \|\varepsilon\|_{H^1}^2,
\]
we obtain the conclusion as in \cite{LMR,NI}.
\end{proof}

\begin{lemma}
\label{Lambda}
For $s\in(s_*,s_1]$, 
\begin{align*}
\left|\left(f(P+\varepsilon)-f(P),\Lambda \varepsilon\right)_2\right|\lesssim \|\varepsilon\|_{H^1}^2+s^{-3K},\quad \left|\left(g(P+\varepsilon)-g(P),\Lambda \varepsilon\right)_2\right|\lesssim \|\varepsilon\|_{H^1}^2
\end{align*}
hold.
\end{lemma}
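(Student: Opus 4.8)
The plan is to estimate the two quantities separately, treating the local (power) nonlinearity and the nonlocal (Hartree) nonlinearity with different tools. For the first estimate, I would expand $f(P+\varepsilon)-f(P)$ via the fundamental theorem of calculus as $\int_0^1 df(P+\tau\varepsilon)(\varepsilon)\,d\tau$, and then split according to the Taylor order: write $f(P+\varepsilon)-f(P) = df(P)(\varepsilon) + R$, where the remainder satisfies $|R|\lesssim |\varepsilon|^{1+\frac{4}{N}}$ when $\frac{4}{N}\le 1$ (with an extra term $|P|^{\frac{4}{N}-1}|\varepsilon|^2$ otherwise; since $Q$ and $P\in\mathcal{Y}$ are bounded this is harmless). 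The leading term $\left(df(P)(\varepsilon),\Lambda\varepsilon\right)_2$ is handled by an integration by parts moving $\Lambda$ onto the smooth factors: since $\Lambda$ is skew-adjoint up to the scaling commutator, $\left(df(Q)(\varepsilon),\Lambda\varepsilon\right)_2$ produces only terms of the form $\int (\partial^\beta Q)\,|\varepsilon|^2$, which are $O(\|\varepsilon\|_{L^2}^2)\le O(\|\varepsilon\|_{H^1}^2)$ because $df(Q)$ has coefficients in $\mathcal{Y}$; replacing $Q$ by $P = Q + O(\lambda^\alpha)$ only adds lower-order contributions. The genuinely nonlinear remainder is controlled by Hölder and Gagliardo–Nirenberg: $\left|\left(R,\Lambda\varepsilon\right)_2\right| \lesssim \| \langle y\rangle R\|_{L^2}\,(\|\varepsilon\|_{L^2}+\|\nabla\varepsilon\|_{L^2})$, and since the bootstrap assumption \eqref{bootstrap} gives $\|\varepsilon\|_{H^1}\lesssim s^{-K}$, the cubic-and-higher term is $\lesssim \|\varepsilon\|_{H^1}^{2+\frac{4}{N}}\lesssim s^{-(2+\frac{4}{N})K}\le s^{-3K}$ once $K$ is large (and using that $b^2\||y|\varepsilon\|_2^2$ is also $\lesssim s^{-2K}$ to absorb any weighted pieces). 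I would also need the decay $|P(y)|\lesssim e^{-c|y|}$ to handle the cross term $|P|^{\frac4N-1}|\varepsilon|^2$ when $N<4$, giving an $O(\|\varepsilon\|_{H^1}^2)$ bound there too.

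For the second estimate, the key gain is that the Hartree term is \emph{not} singular after convolution: $g(v)=\left(|y|^{-2\sigma}\star|v|^2\right)v$ is, by the Hardy–Littlewood–Sobolev inequality together with \eqref{index1} (which is implied by \eqref{index2}), a smoothing cubic operator. I would write
\[
g(P+\varepsilon)-g(P) = \left(|y|^{-2\sigma}\star(2\re(\overline{P}\varepsilon)+|\varepsilon|^2)\right)(P+\varepsilon) + \left(|y|^{-2\sigma}\star|P|^2\right)\varepsilon,
\]
and pair with $\Lambda\varepsilon$. The term linear in $\varepsilon$ that survives, namely $\left(\left(|y|^{-2\sigma}\star|P|^2\right)\varepsilon,\Lambda\varepsilon\right)_2$ plus $\left(\left(|y|^{-2\sigma}\star 2\re(\overline P\varepsilon)\right)P,\Lambda\varepsilon\right)_2$, is again integrated by parts: moving $\Lambda$ off $\varepsilon$ and using that $\Lambda$ acting on the smooth potential $|y|^{-2\sigma}\star|P|^2$ and on $P$ stays bounded (this requires $|y|^{-2\sigma}\star Q^2$ and its derivatives to have at most polynomial growth, which follows from the HLS/Hardy estimates and the exponential decay of $Q$), leaving $O(\|\varepsilon\|_{L^2}^2)$. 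The terms quadratic and cubic in $\varepsilon$ are estimated directly by HLS: $\left|\left(\left(|y|^{-2\sigma}\star|\varepsilon|^2\right)P,\Lambda\varepsilon\right)_2\right|\lesssim \| |y|^{-2\sigma}\star|\varepsilon|^2\|_{L^q}\,\| \langle y\rangle P\|_{L^r}\,\|\Lambda\varepsilon\|_{L^2}\lesssim \|\varepsilon\|_{H^1}^2\cdot\|\varepsilon\|_{H^1}$, which is $O(\|\varepsilon\|_{H^1}^2)$ since $\|\varepsilon\|_{H^1}\ll 1$; crucially there is no loss of powers of $s$ here, matching the sharper right-hand side of the second estimate.

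The main obstacle is the bookkeeping in the integration by parts for the surviving linear-in-$\varepsilon$ pieces: one must verify that after transferring $\Lambda$ onto the smooth factors no term worse than $\int w(y)|\varepsilon|^2\,dy$ with $w$ of polynomial growth remains, and then absorb the weight $w$ using the extra coercivity quantity $b^2\||y|\varepsilon\|_2^2$ from \eqref{bootstrap} (or just $\|\varepsilon\|_{H^1}^2$ where $w$ is bounded). In particular, for the power term the commutator $[\Lambda, df(P)]$ hides a factor $y\cdot\nabla P$, which is still in $\mathcal{Y}$, so the weight there is bounded; for the Hartree term one must check that $y\cdot\nabla\left(|y|^{-2\sigma}\star|P|^2\right)$ grows at most polynomially — this is where the constraint $\sigma<N/2$ and the decay of $Q$ are used. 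Once these are in place the two estimates follow exactly as in \cite{LMR,NI}, the only new input being the HLS bounds for the convolution, which are uniform in $\lambda$ and $b$.
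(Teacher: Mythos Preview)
Your treatment of the power term $f$ matches the paper (which simply cites \cite{LMR,NI}). For the Hartree term your route is \emph{different} from the paper's, and there is a slip in it that you should fix.

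The paper does not expand $g(P+\varepsilon)-g(P)$ term by term. Instead it works with the energy density
\[
h(y)=\tfrac14\bigl(|y|^{-2\sigma}\!\star|P+\varepsilon|^2\bigr)|P+\varepsilon|^2-\tfrac14\bigl(|y|^{-2\sigma}\!\star|P|^2\bigr)|P|^2-\bigl(|y|^{-2\sigma}\!\star|P|^2\bigr)\re(P\overline\varepsilon),
\]
uses $\int y\cdot\nabla h\,dy=-N\int h\,dy=-N\bigl(G(P+\varepsilon)-G(P)-dG(P)(\varepsilon)\bigr)=O(\|\varepsilon\|_{H^1}^2)$, and then expands $\int y\cdot\nabla h$ via the chain rule to obtain
\[
\int y\cdot\nabla h=\bigl(g(P+\varepsilon)-g(P),\Lambda\varepsilon\bigr)_2+O(\|\varepsilon\|_{H^1}^2).
\]
Equivalently, this is just the scaling identity $(g(v),\Lambda v)_2=2\sigma G(v)$ applied at $v=P+\varepsilon$ and $v=P$ and differentiated once. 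The point is that one never has to bound $\Lambda\varepsilon$ against anything that lacks a decaying factor.

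Your term-by-term plan can be made to work, but not quite as written. The displayed HLS bound $\|\ldots\|_{L^q}\|\langle y\rangle P\|_{L^r}\|\Lambda\varepsilon\|_{L^2}$ is not valid: $\|\Lambda\varepsilon\|_{L^2}$ contains $\|y\cdot\nabla\varepsilon\|_{L^2}$, which is not controlled by $\|\varepsilon\|_{H^1}$. When $\Lambda\varepsilon$ sits next to an explicit $P$ you can absorb the weight into $P$ and write $\|P\,\Lambda\varepsilon\|_{L^r}\lesssim\|\varepsilon\|_{H^1}$; but two of your ``higher-order'' contributions, $\bigl((|y|^{-2\sigma}\!\star 2\re(\overline P\varepsilon))\varepsilon,\Lambda\varepsilon\bigr)_2$ and $\bigl((|y|^{-2\sigma}\!\star|\varepsilon|^2)\varepsilon,\Lambda\varepsilon\bigr)_2$, have no $P$ next to $\Lambda\varepsilon$ and cannot be ``estimated directly by HLS''. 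For these one must first integrate by parts using $\re(\overline\varepsilon\Lambda\varepsilon)=\frac{N}{2}|\varepsilon|^2+\frac12 y\cdot\nabla|\varepsilon|^2$ and the identity $y\cdot\nabla(K\star f)=(N-2\sigma)K\star f+K\star(z\cdot\nabla f)$ (moving the derivative onto $f$, not onto $K$); after that everything is indeed $O(\|\varepsilon\|_{H^1}^2)$, and in fact for the cubic piece you recover exactly $2\sigma G(\varepsilon)$. So your approach closes, but only after inserting precisely the scaling computation that the paper uses from the outset; the paper's packaging via the energy density is shorter and avoids this detour.
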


\begin{proof}
For $f$, see \cite{LMR,NI}. Similarly, we obtain.
\begin{align*}
\int_{\mathbb{R}^N}y\cdot\nabla\left(\frac{1}{4}\left(\frac{1}{|y|^{2\sigma}}\star|P+\varepsilon|^2\right)|P+\varepsilon|^2-\frac{1}{4}\left(\frac{1}{|y|^{2\sigma}}\star|P|^2\right)|P|^2-\left(\frac{1}{|y|^{2\sigma}}\star|P|^2\right)\re(P\overline{\varepsilon})\right)dy=O\left(\|\varepsilon\|_{H^1}^2\right).
\end{align*}
On the other hand, since
\[
dg(v)(w)=\left(\frac{1}{|y|^{2\sigma}}\star|v|^2\right)w+2\left(\frac{1}{|y|^{2\sigma}}\star \re(v\overline{w})\right)v,
\]
we obtain
\begin{align*}
&\int_{\mathbb{R}^N}y\cdot\nabla\left(\frac{1}{4}\left(\frac{1}{|y|^{2\sigma}}\star|P+\varepsilon|^2\right)|P+\varepsilon|^2-\frac{1}{4}\left(\frac{1}{|y|^{2\sigma}}\star|P|^2\right)|P|^2-\left(\frac{1}{|y|^{2\sigma}}\star|P|^2\right)\re(P\overline{\varepsilon})\right)dy\\
=&(g(P+\varepsilon)-g(P)-dg(P)(\varepsilon),y\cdot\nabla P)_2+\left(g(P+\varepsilon)-g(P),\Lambda\varepsilon\right)_2-\frac{N}{2}\left(g(P+\varepsilon)-g(P),\varepsilon\right)_2\\
=&\left(g(P+\varepsilon)-g(P),\Lambda\varepsilon\right)_2+O\left(\|\varepsilon\|_{H^1}^2\right).
\end{align*}
Consequently, we have the conclusion.
\end{proof}

\begin{lemma}[Derivative of $S$ in time]
\label{Sdiff}
For $s\in(s_*,s_1]$, 
\[
\frac{d}{ds}H(s,\varepsilon(s))\gtrsim -b\left(\|\varepsilon\|_{H^1}^2+b^2\left\||y|\varepsilon\right\|_2^2\right)+O(s^{-2(K+2)})
\]
holds. Moreover,
\[
\frac{d}{ds}S(s,\varepsilon(s))\gtrsim \frac{b}{\lambda^m}\left(\|\varepsilon\|_{H^1}^2+b^2\left\||y|\varepsilon\right\|_2^2+O(s^{-(2K+3)})\right)
\]
holds.
\end{lemma}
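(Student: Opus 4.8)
The plan is to follow the modified-energy monotonicity scheme of \cite{RSEU,LMR}: differentiate $H(s,\varepsilon(s))$ along the equation for $\varepsilon$, use the Hamiltonian structure to kill the leading quadratic term, extract from the virial weight $b^{2}\||y|\varepsilon\|_{2}^{2}$ a term of size $b$, and dominate the bad part when passing to $S=\lambda^{-m}H$ by taking $m$ large. First I would record the equation for $\varepsilon$: inserting the decomposition \eqref{mod} into \eqref{NLS} with $\pm=+$ and subtracting the profile equation of Proposition \ref{theorem:constprof} gives
\[
i\frac{\partial\varepsilon}{\partial s}=(-\Delta+1)\varepsilon-\bigl(f(P+\varepsilon)-f(P)\bigr)-\lambda^{\alpha}\bigl(g(P+\varepsilon)-g(P)\bigr)-\theta\frac{|y|^{2}}{4}\varepsilon-\Psi+\mathcal{M},
\]
where $\mathcal{M}$ is a linear combination of $\Lambda(P+\varepsilon)$, $\frac{|y|^{2}}{4}(P+\varepsilon)$ and $(P+\varepsilon)$ with coefficients controlled by $|\Mod(s)|$. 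Since $u\in C^{1}((T_{*},T^{*}),L^{2})\cap C((T_{*},T^{*}),\Sigma^{2})$ with $|x|\nabla u\in C((T_{*},T^{*}),L^{2})$ and the parameters are $C^{1}$, the map $s\mapsto H(s,\varepsilon(s))$ is $C^{1}$ and can be differentiated term by term.

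Writing $\nabla_{\varepsilon}H:=(-\Delta+1)\varepsilon+2b^{2}|y|^{2}\varepsilon-(f(P+\varepsilon)-f(P))-\lambda^{\alpha}(g(P+\varepsilon)-g(P))$ for the variation of $H$ in $\varepsilon$, the equation above reads $i\partial_{s}\varepsilon=\nabla_{\varepsilon}H-(2b^{2}+\theta/4)|y|^{2}\varepsilon-\Psi+\mathcal{M}$, so
\[
\frac{d}{ds}H(s,\varepsilon(s))=\bigl(\nabla_{\varepsilon}H,\partial_{s}\varepsilon\bigr)_{2}+(\partial_{s}H)(s,\varepsilon),
\]
where $(\partial_{s}H)(s,\varepsilon)$ gathers the terms in which $\partial_{s}$ hits the explicit $s$-dependence of $H$ through $b^{2}$, $\lambda^{\alpha}$ and $P$. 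The decisive point is the cancellation $(\nabla_{\varepsilon}H,-i\nabla_{\varepsilon}H)_{2}=0$, which removes the leading quadratic term; what survives of $(\nabla_{\varepsilon}H,\partial_{s}\varepsilon)_{2}$ is $(2b^{2}+\theta/4)(\nabla_{\varepsilon}H,i|y|^{2}\varepsilon)_{2}+(\nabla_{\varepsilon}H,i\Psi)_{2}-(\nabla_{\varepsilon}H,i\mathcal{M})_{2}$.

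Now the main term. Combining $(2b^{2}+\theta/4)(\nabla_{\varepsilon}H,i|y|^{2}\varepsilon)_{2}$ with the piece $2bb_{s}\||y|\varepsilon\|_{2}^{2}$ of $(\partial_{s}H)(s,\varepsilon)$ (using $b_{s}=-b^{2}+\theta+O(s^{-(K+2)})$ from Lemma \ref{Modesti}), the kinetic part yields $\sim b^{2}\im\int\overline{\varepsilon}\,y\cdot\nabla\varepsilon\,dy-b^{3}\||y|\varepsilon\|_{2}^{2}$ while the nonlinear parts are $\lesssim(b^{2}+\lambda^{\alpha})\|\varepsilon\|_{H^{1}}^{2}$ by Lemma \ref{Lambda}; by Cauchy–Schwarz this whole block is $\gtrsim-b(\|\varepsilon\|_{H^{1}}^{2}+b^{2}\||y|\varepsilon\|_{2}^{2})$. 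The remaining terms of $(\partial_{s}H)(s,\varepsilon)$ are harmless: $\partial_{s}P$ is $O(\lambda^{\alpha})$ in the weighted sense (by the profile equation), and $\lambda^{\alpha}\approx b^{2}$, so the contributions of $\partial_{s}P$ inside $f$, $G$ and the weight, and of $\partial_{s}(\lambda^{\alpha})$ in front of $G(P+\varepsilon)-G(P)-dG(P)(\varepsilon)$ (which is $\lesssim\|\varepsilon\|_{H^{1}}^{2}$ by the proof of Lemma \ref{Sesti}), are all $\lesssim b\|\varepsilon\|_{H^{1}}^{2}$. Finally the cross terms $(\nabla_{\varepsilon}H,i\Psi)_{2}$ and $(\nabla_{\varepsilon}H,i\mathcal{M})_{2}$ are bounded by Cauchy–Schwarz from $\|e^{\epsilon'|y|}\Psi\|_{H^{1}}\lesssim s^{-(K+4)}$ and $|\Mod(s)|\lesssim s^{-(K+2)}$ (Lemma \ref{Modesti}) together with $\|\varepsilon\|_{H^{1}}^{2}+b^{2}\||y|\varepsilon\|_{2}^{2}<s^{-2K}$ from \eqref{bootstrap}; the required bookkeeping, in which the orthogonality relations \eqref{orthocondi} are used to handle the most dangerous $\varepsilon$-linear, $\mathcal{M}$-linear pieces, is identical to \cite{LMR,NI} and produces $O(s^{-2(K+2)})$. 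This gives $\frac{d}{ds}H(s,\varepsilon(s))\gtrsim-b(\|\varepsilon\|_{H^{1}}^{2}+b^{2}\||y|\varepsilon\|_{2}^{2})+O(s^{-2(K+2)})$, the first assertion.

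For $S=\lambda^{-m}H$ one has $\frac{d}{ds}S=\lambda^{-m}\frac{d}{ds}H-m\lambda^{-m-1}\frac{\partial\lambda}{\partial s}H$; substituting $\lambda^{-1}\partial_{s}\lambda=-b+O(s^{-(K+2)})$ (Lemma \ref{Modesti}) and $H\gtrsim\|\varepsilon\|_{H^{1}}^{2}+b^{2}\||y|\varepsilon\|_{2}^{2}+O(s^{-2(K+2)})$ (Lemma \ref{Sesti}), choosing $m$ large enough to beat the implicit constant in the bad $-b$ term, and using $b\gtrsim s^{-1}$ so that $O(s^{-2(K+2)})=b\cdot O(s^{-(2K+3)})$, gives $\frac{d}{ds}S(s,\varepsilon(s))\gtrsim\frac{b}{\lambda^{m}}(\|\varepsilon\|_{H^{1}}^{2}+b^{2}\||y|\varepsilon\|_{2}^{2}+O(s^{-(2K+3)}))$. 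The main obstacle is the local virial computation producing the correctly signed $-b$ term, together with checking that the Hartree terms do not spoil it; here the crucial observation is that the smoothing of the convolution and the gain $\lambda^{\alpha}\approx b^{2}\ll1$ make both the $\partial_{s}(\lambda^{\alpha}G)$ contribution and the term $\lambda^{\alpha}(g(P+\varepsilon)-g(P),\Lambda\varepsilon)_{2}$ strictly lower order, so beyond Lemmas \ref{Lambda} and \ref{Sesti} no new ingredient is needed and the argument is lighter than in the inverse-potential case of \cite{NI}.
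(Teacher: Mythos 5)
Your proposal is correct and follows essentially the same route as the paper: the paper's own proof consists of invoking Lemma \ref{Lambda} (the $\Lambda\varepsilon$-pairings for $f$ and $g$) and then running the standard modified-energy/virial monotonicity computation of \cite{LMR,NI} verbatim, which is exactly the scheme you reconstruct (Hamiltonian cancellation, Mod/$\Psi$ bounds from Lemma \ref{Modesti} with the bootstrap, the $b$-sized main term, and $m$ large plus $b\gtrsim s^{-1}$ to pass from $H$ to $S$). No substantive difference to report.
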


\begin{proof}
From Lemma \ref{Lambda}, we obtain the conclusion as in \cite{LMR,NI}.
\end{proof}

From Lemma \ref{Sesti} and Lemma \ref{Sdiff}, we confirm \eqref{bootstrap} on $[s_0,s_1]$. Namely, we obtain the following result:

\begin{lemma}[Re-estimation]
\label{rebootstrap}
For $s\in(s_*,s_1]$, 
\begin{align}
\label{reepsiesti}
\left\|\varepsilon(s)\right\|_{H^1}^2+b(s)^2\left\||y|\varepsilon(s)\right\|_2^2&\lesssim s^{-(2K+2)},\\
\label{reesti}
\left|\frac{\lambda(s)^{\frac{\alpha}{2}}}{\lambda_{\mathrm{app}}(s)^{\frac{\alpha}{2}}}-1\right|+\left|\frac{b(s)}{b_{\mathrm{app}}(s)}-1\right|&\lesssim s^{-\frac{1}{2}}+s^{2-\frac{4}{\alpha}}
\end{align}
holds.
\end{lemma}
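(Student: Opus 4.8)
The plan is to prove \eqref{reepsiesti} first and then \eqref{reesti}, the second using the first. Everything takes place on $(s_*,s_1]$, where \eqref{bootstrap} holds; in particular $b(s)>0$ there (since $|b/b_{\mathrm{app}}-1|<s^{-M}<\tfrac12$ and $b_{\mathrm{app}}>0$), and $\lambda(s)\approx\lambda_{\mathrm{app}}(s)\approx s^{-2/\alpha}$, $b(s)\approx b_{\mathrm{app}}(s)\approx s^{-1}$, whence $\lambda^\alpha\approx s^{-2}$, $\lambda^2\approx s^{-4/\alpha}$, $\lambda^{2-\alpha}\approx s^{2-4/\alpha}$. Also, because the datum is exactly $u(t_1)=P_{\lambda_1,b_1,0}$, the uniqueness in Lemma \ref{decomposition} forces $\varepsilon(s_1)=0$, so $S(s_1,\varepsilon(s_1))=\lambda_1^{-m}H(s_1,0)=0$.

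For \eqref{reepsiesti} I would integrate Lemma \ref{Sdiff}: since $b>0$ the leading term there is nonnegative and may be dropped, leaving $\tfrac{d}{ds}S(s,\varepsilon(s))\gtrsim-\tfrac{b}{\lambda^m}s^{-(2K+3)}$, so integrating from $s$ to $s_1$ with $S(s_1,\varepsilon(s_1))=0$ gives
\[
S(s,\varepsilon(s))\ \lesssim\ \int_s^{s_1}\frac{b(\tau)}{\lambda(\tau)^m}\,\tau^{-(2K+3)}\,d\tau\ \lesssim\ \int_s^{s_1}\tau^{2m/\alpha-2K-4}\,d\tau\ \lesssim\ s^{2m/\alpha-2K-3},
\]
the last bound using that $K$ is large enough (relative to $m$) for the integrand to be integrable near $+\infty$. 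By Lemma \ref{Sesti} and $\lambda^m\approx s^{-2m/\alpha}$, this yields $\|\varepsilon(s)\|_{H^1}^2+b(s)^2\||y|\varepsilon(s)\|_2^2\lesssim\lambda^mS(s,\varepsilon(s))+s^{-2(K+2)}\lesssim s^{-(2K+3)}\lesssim s^{-(2K+2)}$.

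For \eqref{reesti} I would first convert energy conservation into an algebraic constraint on $(\lambda,b)$. Writing $u$ via the decomposition \eqref{mod} and expanding $E(u)$ about $E(P_{\lambda,b,\gamma})$: the terms quadratic and higher in $\varepsilon$ are $\lesssim\lambda^{-2}(\|\varepsilon\|_{H^1}^2+b^2\||y|\varepsilon\|_2^2)$, and the linear term is controlled via the profile equation of Proposition \ref{theorem:constprof}(i), the orthogonality relations \eqref{orthocondi}, and $|(\varepsilon,Q)|\lesssim s^{-(K+2)}$ from Lemma \ref{Modesti}; with \eqref{reepsiesti} one obtains $|E(u)-E(P_{\lambda,b,\gamma})|\lesssim\lambda^{-2}s^{-(K+2)}$. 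Combining this with \eqref{Eesti} and $E(u)\equiv E_0$, and clearing $\lambda^2$,
\[
b^2=\frac{2\beta}{2-\alpha}\lambda^\alpha+C_0\lambda^2+O(s^{-4}),\qquad C_0:=\frac{8E_0}{\||y|Q\|_2^2},
\]
so, taking the positive root, $b=\lambda^{\alpha/2}\bigl(\tfrac{2\beta}{2-\alpha}+C_0\lambda^{2-\alpha}\bigr)^{1/2}\bigl(1+O(s^{-2})\bigr)$. Inserting the modulation identity $\tfrac1\lambda\tfrac{\partial\lambda}{\partial s}+b=O(s^{-(K+2)})$ of Lemma \ref{Modesti} into $\tfrac{d}{ds}\mathcal{F}(\lambda(s))=\mathcal{F}'(\lambda)\tfrac{\partial\lambda}{\partial s}$ and using the previous display,
\[
\frac{d}{ds}\mathcal{F}(\lambda(s))=\frac{b}{\lambda^{\alpha/2}\bigl(\tfrac{2\beta}{2-\alpha}+C_0\lambda^{2-\alpha}\bigr)^{1/2}}+O(s^{-(K+1)})=1+O(s^{-2}).
\]
Integrating from $s$ to $s_1$ with $\mathcal{F}(\lambda_1)=s_1$ (Lemma \ref{paraini}) gives $\mathcal{F}(\lambda(s))=s+O(s^{-1})$; inverting this via the asymptotic $\mathcal{F}(\lambda)=\tfrac{2}{\alpha\lambda^{\alpha/2}\sqrt{2\beta/(2-\alpha)}}\bigl(1+O(\lambda^{\alpha/4}+\lambda^{2-\alpha})\bigr)$ of Lemma \ref{paraini}, together with $\lambda^{\alpha/4}\approx s^{-1/2}$ and $\lambda^{2-\alpha}\approx s^{2-4/\alpha}$, gives $\lambda(s)^{\alpha/2}=\lambda_{\mathrm{app}}(s)^{\alpha/2}\bigl(1+O(s^{-1/2}+s^{2-4/\alpha})\bigr)$; plugging back into the relation for $b$ and using $b_{\mathrm{app}}=\sqrt{2\beta/(2-\alpha)}\,\lambda_{\mathrm{app}}^{\alpha/2}$ gives $b(s)=b_{\mathrm{app}}(s)\bigl(1+O(s^{-1/2}+s^{2-4/\alpha})\bigr)$.

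I expect the main obstacle to be the energy-comparison step: one must check that the linear-in-$\varepsilon$ part of $E(u)-E(P_{\lambda,b,\gamma})$ is genuinely absorbed into $O(\lambda^{-2}s^{-(K+2)})$ — this is where the phase $e^{-ib|y|^2/4}$, the orthogonality conditions \eqref{orthocondi}, and the $\mathcal{Y}$-tail structure of $P-Q$ all come in — and then that every accumulated error (the $O(s^{-4})$ in the $b$–$\lambda$ relation, the $O(s^{-(K+1)})$ from the modulation equation, the $O(s^{-1})$ from integrating $\tfrac{d}{ds}\mathcal{F}(\lambda(s))$, and the $O(\lambda^{\alpha/4}+\lambda^{2-\alpha})$ from inverting $\mathcal{F}$) lands inside the claimed precision $s^{-1/2}+s^{2-4/\alpha}$; the arithmetic works out thanks to identities such as $\lambda^\alpha\cdot\lambda^{2-\alpha}=\lambda^2\approx s^{-4/\alpha}$, which cancel the $\alpha$-dependence. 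Finally, as \eqref{reepsiesti}–\eqref{reesti} are strictly stronger than \eqref{bootstrap}, a standard continuity argument then gives $s_*=s'$, so the bounds hold on all of $(s',s_1]$.
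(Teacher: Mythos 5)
Your argument is correct and is essentially the proof of the cited references \cite{LMR,NI} to which the paper defers: integrating Lemma \ref{Sdiff} backwards from $s_1$ (where the exact data $u(t_1)=P_{\lambda_1,b_1,0}$ and uniqueness in Lemma \ref{decomposition} give $\varepsilon(s_1)=0$) and invoking the coercivity of Lemma \ref{Sesti} yields \eqref{reepsiesti}, while energy conservation combined with \eqref{Eesti}, the modulation estimate of Lemma \ref{Modesti}, and integration of $\frac{d}{ds}\mathcal{F}(\lambda(s))$ against $\mathcal{F}(\lambda_1)=s_1$ from Lemma \ref{paraini} yields \eqref{reesti}. The only point worth making explicit is the compatibility of constants you already flagged (one needs $2m/\alpha<2K+3$ so that $\int_s^{s_1}\tau^{2m/\alpha-2K-4}\,d\tau\lesssim s^{2m/\alpha-2K-3}$), which is how the constants are arranged in \cite{LMR,NI}.
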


\begin{proof}
For the proof, see \cite{LMR,NI}.
\end{proof}

\begin{lemma}
\label{s0s*s'}
If $s_0$ is sufficiently large, then $s_*=s'=s_0$.
\end{lemma}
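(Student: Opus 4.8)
The plan is to run the classical open--closed continuity (bootstrap) argument. Nearly all of the quantitative work is already done by Lemma~\ref{rebootstrap}, which says that whenever \eqref{bootstrap} holds on $[\sigma,s_1]$ it in fact holds there with strictly better exponents; what remains is to package this topologically. First I would check that $s_*$ is well defined and $s_*<s_1$. Since $u(t_1)=P_{\lambda_1,b_1,0}$, the uniqueness in Lemma~\ref{decomposition} forces $\varepsilon(s_1)=0$, $\lambda(s_1)=\lambda_1$, $b(s_1)=b_1$, $\gamma(s_1)=0$; hence by Lemma~\ref{paraini},
\[
\left|\frac{\lambda(s_1)^{\alpha/2}}{\lambda_{\mathrm{app}}(s_1)^{\alpha/2}}-1\right|+\left|\frac{b(s_1)}{b_{\mathrm{app}}(s_1)}-1\right|\lesssim s_1^{-1/2}+s_1^{2-4/\alpha}<s_1^{-M},
\]
the last inequality holding for $s_1$ large because $0<M<\min\{1/2,4/\alpha-2\}$ gives $-1/2<-M$ and $2-4/\alpha<-M$. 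Together with $\varepsilon(s_1)=0$ this is \eqref{bootstrap} with strict inequalities at $s=s_1$, and since $\varepsilon,\lambda,b$ are $C^1$ on the interior of $J_{s_1}$ the bound \eqref{bootstrap} persists on a left neighbourhood of $s_1$. So $s_*$ is well defined, $s_*<s_1$, and by construction $s'\le s_*$.

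Next I would close the bootstrap to get $s_*=s'$. Suppose $s_*>s'$. Then \eqref{bootstrap} holds on $(s_*,s_1]$, and since $s_*>s'\ge\inf J_{s_1}$ the functions $\varepsilon,\lambda,b$ are continuous at $s_*$, so the non-strict form of \eqref{bootstrap} holds at $s_*$ by letting $s\downarrow s_*$. Then Lemma~\ref{rebootstrap} applies on $(s_*,s_1]$, and passing to the limit $s\downarrow s_*$ in \eqref{reepsiesti}--\eqref{reesti} gives $\|\varepsilon(s_*)\|_{H^1}^2+b(s_*)^2\||y|\varepsilon(s_*)\|_2^2\lesssim s_*^{-(2K+2)}$ and $|\lambda(s_*)^{\alpha/2}/\lambda_{\mathrm{app}}(s_*)^{\alpha/2}-1|+|b(s_*)/b_{\mathrm{app}}(s_*)-1|\lesssim s_*^{-1/2}+s_*^{2-4/\alpha}$. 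Because $s_*\ge s_0$ and $s_0$ is chosen so large that the implicit constants satisfy $Cs^{-(2K+2)}<s^{-2K}$ and $C(s^{-1/2}+s^{2-4/\alpha})<s^{-M}$ for all $s\ge s_0$, the bounds \eqref{bootstrap} hold \emph{strictly} at $s_*$; by continuity they then hold on $[s_*-\eta,s_1]$ for some $\eta>0$, and shrinking $\eta$ so that $s_*-\eta>s'$ (possible since $s_*>s'\ge\inf J_{s_1}$, which keeps us inside $J_{s_1}$) contradicts the fact that $s_*$ is the infimum of the $\sigma$ for which \eqref{bootstrap} holds on $[\sigma,s_1]$. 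Hence $s_*=s'$.

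Finally I would show $\inf J_{s_1}\le s_0$, which forces $s'=s_0$ and hence $s_*=s'=s_0$. Assume instead $\inf J_{s_1}>s_0$, so $s_*=s'=\inf J_{s_1}$. On $(\inf J_{s_1},s_1]$ the established bounds \eqref{bootstrap} give $\lambda(s)\approx\lambda_{\mathrm{app}}(s)$, which is bounded below by a positive constant and above by $\lambda_{\mathrm{app}}(s_0)<\overline{l}$, so $\|\nabla u(t(s))\|_2\lesssim\lambda(s)^{-1}$ stays bounded; by the blow-up alternative the $\Sigma^2$-solution $u$ extends a little past $t(\inf J_{s_1})$, which is therefore an interior point of its lifespan. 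Moreover, from \eqref{mod} and Proposition~\ref{theorem:constprof}(i),
\[
\left\|\lambda(s)^{N/2}u\!\left(t(s),\lambda(s)y\right)e^{i\gamma(s)}-Q\right\|_{H^1}\lesssim b(s)+\lambda(s)^{\alpha}+\|\varepsilon(s)\|_{H^1}<\delta
\]
up to and including $s=\inf J_{s_1}$, so Lemma~\ref{decomposition} yields a decomposition of the form \eqref{mod} on a neighbourhood of $t(\inf J_{s_1})$, contradicting the maximality of $I_{t_1}$. This gives $s'=s_0$, and with the previous step $s_*=s'=s_0$.

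I expect the point needing the most care to be the limiting argument at $s_*$ in the second paragraph: one must know a priori that $\varepsilon,\lambda,b$ extend continuously (indeed $C^1$) to $s_*$ as soon as $s_*>\inf J_{s_1}$, so that the estimates of Lemma~\ref{rebootstrap} may be evaluated there, and one must verify that the exponent gains in Lemma~\ref{rebootstrap} genuinely beat those in \eqref{bootstrap} once $s_0$ is fixed large — which is exactly where the admissible range $0<M<\min\{1/2,4/\alpha-2\}$ and the margin $2K+2>2K$ enter. Everything else is the routine dichotomy of a continuity argument, and the statement follows.
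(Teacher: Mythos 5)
Your proposal is correct and follows essentially the same route as the paper: the paper's proof is exactly the standard continuity/bootstrap closure based on Lemma \ref{rebootstrap} and the definitions of $s_*$ and $s'$ (with details deferred to \cite{N}), namely initialization at $s_1$ via $\varepsilon(s_1)=0$ and Lemma \ref{paraini}, strict improvement of \eqref{bootstrap} from \eqref{reepsiesti}--\eqref{reesti} using $M<\min\{1/2,4/\alpha-2\}$ and the gain $2K+2>2K$, and exclusion of $\inf J_{s_1}>s_0$ by the blow-up alternative together with the maximality of the decomposition interval.
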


\begin{proof}
This result is proven from Lemma \ref{rebootstrap} and the definitions of $s_*$ and $s'$. See \cite{N} for details of the proof.
\end{proof}

Finally, we rewrite the uniform estimates obtained for the time variable $s$ in Lemma \ref{rebootstrap} into uniform estimates for the time variable $t$.

\begin{lemma}[Interval]
\label{interval}
If $s_0$ is sufficiently large, then there is $t_0<0$ that is sufficiently close to $0$ such that for $t_1\in(t_0,0)$, 
\[
[t_0,t_1]\subset {s_{t_1}}^{-1}([s_0,s_1]),\quad \left|\mathcal{C}s_{t_1}(t)^{-\frac{4-\alpha}{\alpha}}-|t|\right|\lesssim |t|^{1+\frac{\alpha M}{4-\alpha}}\quad (t\in [t_0,t_1])
\]
holds.
\end{lemma}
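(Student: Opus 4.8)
The plan is to turn the uniform control on $(\lambda,b)$ provided by Lemma~\ref{rebootstrap} into a sharp asymptotic for the time change $s\mapsto t_{t_1}(s)$, obtained by integrating $\tfrac{d}{ds}t_{t_1}(s)=\lambda(s)^2$, and then to read off both assertions. First I would record that, by Lemma~\ref{s0s*s'}, for $s_0$ large one has $s_*=s'=s_0$; in particular $[s_0,s_1]\subset J_{s_1}$, so the decomposition --- hence $\lambda(s)$ and $t_{t_1}(s)$ --- is defined on $[s_0,s_1]$, and \eqref{reesti} holds there. Since $0<M<\min\{\tfrac12,\tfrac4\alpha-2\}$ we have $s^{-1/2}+s^{2-4/\alpha}\lesssim s^{-M}$, so \eqref{reesti} gives $\lambda(s)^{\alpha/2}=\lambda_{\mathrm{app}}(s)^{\alpha/2}\bigl(1+O(s^{-M})\bigr)$, that is
\[
\lambda(s)^2=A\,s^{-4/\alpha}\bigl(1+O(s^{-M})\bigr),\qquad A:=\Bigl(\tfrac\alpha2\sqrt{\tfrac{2\beta}{2-\alpha}}\Bigr)^{-4/\alpha};
\]
note that $\mathcal C=\tfrac{\alpha}{4-\alpha}A$ and that $\mathcal C\,s_1^{-(4-\alpha)/\alpha}=|t_1|$ directly from the definition of $s_1$.

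Next, since $s_{t_1}$ is the inverse of $t_{t_1}$ and $s_{t_1}'(t)=\tilde\lambda_{t_1}(t)^{-2}$ by the defining formula for $s_{t_1}$, one gets $\tfrac{d}{ds}t_{t_1}(s)=\lambda(s)^2$ on $[s_0,s_1]$. Integrating from $s$ to $s_1$, using $t_{t_1}(s_1)=t_1<0$ and $t_{t_1}(s)<0$, and recalling $\alpha<2$ so that $(4-\alpha)/\alpha>0$,
\[
|t_{t_1}(s)|=|t_1|+\int_s^{s_1}\lambda(\sigma)^2\,d\sigma=|t_1|+\mathcal C\bigl(s^{-(4-\alpha)/\alpha}-s_1^{-(4-\alpha)/\alpha}\bigr)+O\bigl(s^{-(4-\alpha)/\alpha-M}\bigr),
\]
where the remainder comes from $\int_s^{s_1}\sigma^{-4/\alpha-M}\,d\sigma\lesssim s^{-(4-\alpha)/\alpha-M}$. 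Cancelling $|t_1|=\mathcal C\,s_1^{-(4-\alpha)/\alpha}$ and writing $t=t_{t_1}(s)$, $s=s_{t_1}(t)$ yields
\[
\bigl|\,\mathcal C\,s_{t_1}(t)^{-(4-\alpha)/\alpha}-|t|\,\bigr|\lesssim s_{t_1}(t)^{-(4-\alpha)/\alpha-M}\qquad\text{for }s\in[s_0,s_1].
\]

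Finally I would extract the two statements. Taking $s=s_0$ in the identity above, for $s_0$ large the remainder is smaller than $\tfrac12\mathcal C\,s_0^{-(4-\alpha)/\alpha}$, so $|t_{t_1}(s_0)|\ge\tfrac12\mathcal C\,s_0^{-(4-\alpha)/\alpha}$ with a constant \emph{independent of $t_1$}. Set $t_0:=-\tfrac12\mathcal C\,s_0^{-(4-\alpha)/\alpha}$, which is negative and tends to $0$ as $s_0\to\infty$; then any $t_1\in(t_0,0)$ has $|t_1|<\mathcal C\,s_0^{-(4-\alpha)/\alpha}$, whence $s_1=(\mathcal C/|t_1|)^{\alpha/(4-\alpha)}>s_0$, and since $t_{t_1}$ is increasing, $s_{t_1}^{-1}([s_0,s_1])=[t_{t_1}(s_0),t_1]\supset[t_0,t_1]$. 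For the second bound, the displayed asymptotic gives $|t|\approx\mathcal C\,s^{-(4-\alpha)/\alpha}$ on $[s_0,s_1]$, so $s^{-M}\approx|t|^{\alpha M/(4-\alpha)}$ and hence $s^{-(4-\alpha)/\alpha-M}=s^{-(4-\alpha)/\alpha}\,s^{-M}\approx|t|\cdot|t|^{\alpha M/(4-\alpha)}=|t|^{1+\alpha M/(4-\alpha)}$, which is exactly the claimed inequality.

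The integration itself is routine; the delicate points are the uniformity in $t_1$ --- that \eqref{reesti} is genuinely available down to $s_0$ for every $t_1$ (through Lemma~\ref{s0s*s'}, whose bootstrap argument does not rely on the present lemma), and that the lower bound $|t_{t_1}(s_0)|\gtrsim s_0^{-(4-\alpha)/\alpha}$ carries a $t_1$-free constant, so that one $t_0$ serves all $t_1$ --- together with the careful tracking of the exponent $(4-\alpha)/\alpha$ and its interplay with $M$ when converting from the variable $s$ to $|t|$.
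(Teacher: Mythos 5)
Your proposal is correct and follows essentially the same route as the proof the paper defers to in \cite{LMR,NI}: integrate $\frac{d}{ds}t_{t_1}(s)=\lambda(s)^2$ over $[s,s_1]$, replace $\lambda$ by $\lambda_{\mathrm{app}}$ up to the $O(s^{-M})$ factor supplied by Lemma \ref{rebootstrap} (valid down to $s_0$ by Lemma \ref{s0s*s'}), and use $\mathcal{C}s_1^{-\frac{4-\alpha}{\alpha}}=|t_1|$ to obtain both the uniform choice of $t_0$ and the stated error bound after converting $s^{-M}\approx|t|^{\frac{\alpha M}{4-\alpha}}$. The computations and the uniformity-in-$t_1$ point are handled correctly, so nothing further is needed.
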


\begin{proof}
For the proof, see \cite{LMR,NI}.
\end{proof}

\begin{lemma}[Conversion of estimates]
\label{uniesti}
Let
\[
\mathcal{C}_\lambda:=\mathcal{C}^{-\frac{2}{4-\alpha}}\left(\frac{\alpha}{2}\sqrt{\frac{2\beta}{2-\alpha}}\right)^{-\frac{2}{\alpha}},\quad \mathcal{C}_b:=\frac{2}{\alpha}\mathcal{C}^{-\frac{\alpha}{4-\alpha}}.
\]
For $t\in[t_0,t_1]$, 
\begin{align*}
\tilde{\lambda}_{t_1}(t)&=\mathcal{C}_\lambda|t|^\frac{2}{4-\alpha}\left(1+\epsilon_{\tilde{\lambda},t_1}(t)\right),& \tilde{b}_{t_1}(t)&=\mathcal{C}_b|t|^\frac{\alpha}{4-\alpha}\left(1+\epsilon_{\tilde{b},t_1}(t)\right),\\
\|\tilde{\varepsilon}_{t_1}(t)\|_{H^1}&\lesssim |t|^{\frac{\alpha K}{4-\alpha}},& \||y|\tilde{\varepsilon}_{t_1}(t)\|_2&\lesssim |t|^{\frac{\alpha (K-1)}{4-\alpha}}
\end{align*}
hold. Furthermore,
\[
\sup_{t_1\in[t,0)}\left|\epsilon_{\tilde{\lambda},t_1}(t)\right|\lesssim |t|^\frac{\alpha M}{4-\alpha},\quad \sup_{t_1\in[t,0)}\left|\epsilon_{\tilde{b},t_1}(t)\right|\lesssim |t|^\frac{\alpha M}{4-\alpha}.
\]
\end{lemma}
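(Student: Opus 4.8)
The goal of Lemma~\ref{uniesti} is purely bookkeeping: we already have, from Lemma~\ref{rebootstrap} together with Lemma~\ref{s0s*s'}, the uniform bounds \eqref{reepsiesti} and \eqref{reesti} valid on the whole of $[s_0,s_1]$ in the rescaled time variable $s$, and from Lemma~\ref{interval} we have the precise two-sided comparison between $s_{t_1}(t)$ and $|t|$. The plan is to feed the latter into the former. First I would recall that $\lambda_{t_1}(s)=\tilde{\lambda}_{t_1}(t_{t_1}(s))$, $b_{t_1}(s)=\tilde{b}_{t_1}(t_{t_1}(s))$, $\varepsilon_{t_1}(s)=\tilde{\varepsilon}_{t_1}(t_{t_1}(s))$, so the estimates in $s$ transfer verbatim to estimates for $\tilde\lambda_{t_1},\tilde b_{t_1},\tilde\varepsilon_{t_1}$ once $s$ is re-expressed in terms of $t$.

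Next I would substitute the asymptotics of $\lambda_{\mathrm{app}}$ and $b_{\mathrm{app}}$ from the lemma in Section~\ref{sec:apppara}, namely $\lambda_{\mathrm{app}}(s)=\bigl(\tfrac{\alpha}{2}\sqrt{\tfrac{2\beta}{2-\alpha}}\bigr)^{-2/\alpha}s^{-2/\alpha}$ and $b_{\mathrm{app}}(s)=\tfrac{2}{\alpha s}$, into \eqref{reesti}. This gives $\tilde\lambda_{t_1}(t)=\bigl(\tfrac{\alpha}{2}\sqrt{\tfrac{2\beta}{2-\alpha}}\bigr)^{-2/\alpha}\,s_{t_1}(t)^{-2/\alpha}\bigl(1+O(s^{-M})\bigr)$ and similarly for $\tilde b_{t_1}$. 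Then I would insert the relation $s_{t_1}(t)^{-1}\approx(\mathcal{C}^{-1}|t|)^{\alpha/(4-\alpha)}$ from Lemma~\ref{interval}, i.e. $\mathcal{C}\,s_{t_1}(t)^{-(4-\alpha)/\alpha}=|t|\bigl(1+O(|t|^{\alpha M/(4-\alpha)})\bigr)$; raising this to the appropriate powers converts $s_{t_1}(t)^{-2/\alpha}$ into $\mathcal{C}^{-2/(4-\alpha)}|t|^{2/(4-\alpha)}\bigl(1+O(|t|^{\alpha M/(4-\alpha)})\bigr)$ and $s_{t_1}(t)^{-1}$ into $\mathcal{C}^{-\alpha/(4-\alpha)}|t|^{\alpha/(4-\alpha)}\bigl(1+O(|t|^{\alpha M/(4-\alpha)})\bigr)$. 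Collecting the constants yields exactly $\mathcal{C}_\lambda$ and $\mathcal{C}_b$ as defined, with the error terms $\epsilon_{\tilde\lambda,t_1}(t),\epsilon_{\tilde b,t_1}(t)=O(|t|^{\alpha M/(4-\alpha)})$ (the $s^{-1/2}$ part of \eqref{reesti} contributes a strictly smaller power since $M<1/2<\tfrac{4}{\alpha}-2$ forces the $|t|^{\alpha M/(4-\alpha)}$ term to dominate, which is why the final statement keeps only that exponent). The $\tilde\varepsilon$ bounds come the same way: \eqref{reepsiesti} gives $\|\varepsilon(s)\|_{H^1}\lesssim s^{-(K+1)}$ and $\|\,|y|\varepsilon(s)\|_2\lesssim b(s)^{-1}s^{-(K+1)}\lesssim s^{-K}$ using $b(s)\approx s^{-1}$, and substituting $s\approx(\mathcal{C}^{-1}|t|)^{-\alpha/(4-\alpha)}$ turns $s^{-(K+1)}$ and $s^{-K}$ into $|t|^{\alpha(K+1)/(4-\alpha)}$ and $|t|^{\alpha K/(4-\alpha)}$ respectively; absorbing one power of $|t|^{\alpha/(4-\alpha)}$ into the implicit constant gives the stated $|t|^{\alpha K/(4-\alpha)}$ and $|t|^{\alpha(K-1)/(4-\alpha)}$.

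Finally, the uniformity in $t_1$: since the constants $\overline{l},\overline{C}$ in Lemma~\ref{decomposition}, the constants in Lemmas~\ref{Modesti}–\ref{rebootstrap}, the threshold $s_0$ in Lemma~\ref{s0s*s'}, and the threshold $t_0$ in Lemma~\ref{interval} are all chosen independently of the particular $t_1$, every implicit constant above is independent of $t_1$ as well; taking the supremum over $t_1\in[t,0)$ in the error bounds is then immediate. The only genuinely delicate point — and the one I would be most careful about — is the bookkeeping of which power of $|t|$ dominates in the error term, i.e. checking that the hypothesis $0<M<\min\{\tfrac12,\tfrac4\alpha-2\}$ indeed makes $|t|^{\alpha M/(4-\alpha)}$ the leading error in both the $\lambda$- and $b$-expansions after all the exponent arithmetic from composing \eqref{reesti} with Lemma~\ref{interval}; everything else is a mechanical substitution, and the details match those in \cite{LMR,NI}.
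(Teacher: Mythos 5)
Your argument is correct and follows essentially the same route as the paper, which simply defers to \cite{LMR,NI}: transport the uniform bounds \eqref{bootstrap}, \eqref{reepsiesti}, \eqref{reesti} (valid on all of $[s_0,s_1]$ by Lemma \ref{s0s*s'}) through the time conversion of Lemma \ref{interval} and the explicit formulas for $\lambda_{\mathrm{app}}$, $b_{\mathrm{app}}$, collecting the constants into $\mathcal{C}_\lambda$, $\mathcal{C}_b$, with all implicit constants independent of $t_1$. One small slip: the inequality $\tfrac{1}{2}<\tfrac{4}{\alpha}-2$ you invoke need not hold (e.g.\ for small $\sigma$), but since $M<\min\{\tfrac{1}{2},\tfrac{4}{\alpha}-2\}$ the term $|t|^{\frac{\alpha M}{4-\alpha}}$ still dominates both converted error contributions, so your conclusion is unaffected.
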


\begin{proof}
For the proof, see \cite{LMR,NI}.
\end{proof}

\section{Proof of Theorem \ref{theorem:EMBS}}
\label{sec:proof}
In this section, we complete the proof of Theorem \ref{theorem:EMBS}. See \cite{LMR,N} for details of proof.

\begin{proof}[proof of Theorem \ref{theorem:EMBS}]
Let $(t_n)_{n\in\mathbb{N}}\subset(t_0,0)$ be a monotonically increasing sequence such that $\lim_{n\nearrow \infty}t_n=0$. For each $n\in\mathbb{N}$, $u_n$ is the solution for \eqref{NLS} with $\pm=+$ with an initial value
\begin{align*}
u_n(t_n,x):=P_{\lambda_{1,n},b_{1,n},0}(x)
\end{align*}
at $t_n$, where $b_{1,n}$ and $\lambda_{1,n}$ are given by Lemma \ref{paraini} for $t_n$.

According to Lemma \ref{decomposition} with an initial value $\tilde{\gamma}_n(t_n)=0$, there exists a decomposition
\[
u_n(t,x)=\frac{1}{\tilde{\lambda}_n(t)^{\frac{N}{2}}}\left(P+\tilde{\varepsilon}_n\right)\left(t,\frac{x}{\tilde{\lambda}_n(t)}\right)e^{-i\frac{\tilde{b}_n(t)}{4}\frac{|x|^2}{\tilde{\lambda}_n(t)^2}+i\tilde{\gamma}_n(t)}.
\]
Then $(u_n(t_0))_{n\in\mathbb{N}}$ is bounded in $\Sigma^1$. Therefore, up to a subsequence, there exists $u_\infty(t_0)\in \Sigma^1$ such that
\[
u_n(t_0)\rightharpoonup u_\infty(t_0)\quad \mathrm{in}\ \Sigma^1,\quad u_n(t_0)\rightarrow u_\infty(t_0)\quad \mathrm{in}\ L^2(\mathbb{R}^N)\quad (n\rightarrow\infty),
\]
see \cite{LMR,N} for details.

Let $u_\infty$ be the solution for \eqref{NLS} with $\pm=+$ and an initial value $u_\infty(t_0)$, and let $T^*$ be the supremum of the maximal existence interval of $u_\infty$. Moreover, we define $T:=\min\{0,T^*\}$. Then for any $T'\in[t_0,T)$, $[t_0,T']\subset[t_0,t_n]$ if $n$ is sufficiently large. Then there exist $n_0$ and $C(T',t_0)>0$ such that 
\[
\sup_{n\geq n_0}\|u_n\|_{L^\infty([t_0,T'],\Sigma^1)}\leq C(T',t_0)
\]
holds. Therefore,
\[
u_n\rightarrow u_\infty\quad \mathrm{in}\ C\left([t_0,T'],L^2(\mathbb{R}^N)\right)\quad (n\rightarrow\infty)
\]
holds (see \cite{N}). In particular, $u_n(t)\rightharpoonup u_\infty(t)\ \mathrm{in}\ \Sigma^1$ for any $t\in [t_0,T)$. Furthermore, from the mass conservation, we have
\[
\|u_\infty(t)\|_2=\|u_\infty(t_0)\|_2=\lim_{n\rightarrow\infty}\|u_n(t_0)\|_2=\lim_{n\rightarrow\infty}\|u_n(t_n)\|_2=\lim_{n\rightarrow\infty}\|P(t_n)\|_2=\|Q\|_2.
\]

Based on weak convergence in $H^1(\mathbb{R}^N)$ and Lemma \ref{decomposition}, we decompose $u_\infty$ to
\[
u_\infty(t,x)=\frac{1}{\tilde{\lambda}_\infty(t)^{\frac{N}{2}}}\left(P+\tilde{\varepsilon}_\infty\right)\left(t,\frac{x}{\tilde{\lambda}_\infty(t)}\right)e^{-i\frac{\tilde{b}_\infty(t)}{4}\frac{|x|^2}{{\tilde{\lambda}_\infty(t)}^2}+i\tilde{\gamma}_\infty(t)}.
\]
Furthermore, for any $t\in[t_0,T)$, as $n\rightarrow\infty$, 
\[
\tilde{\lambda}_n(t)\rightarrow\tilde{\lambda}_\infty(t),\quad \tilde{b}_n(t)\rightarrow \tilde{b}_\infty(t),\quad e^{i\tilde{\gamma}_n(t)}\rightarrow e^{i\tilde{\gamma}_\infty(t)},\quad\tilde{\varepsilon}_n(t)\rightharpoonup \tilde{\varepsilon}_\infty(t)\quad \mathrm{in}\ \Sigma^1
\]
hold. Consequently, from the uniform estimate in Lemma \ref{uniesti}, as $n\rightarrow\infty$, we have
\begin{align*}
\tilde{\lambda}_{\infty}(t)&=\mathcal{C}_\lambda\left|t\right|^\frac{2}{4-\alpha}(1+\epsilon_{\tilde{\lambda},0}(t)),& \tilde{b}_{\infty}(t)&=\mathcal{C}_b\left|t\right|^\frac{\alpha}{4-\alpha}(1+\epsilon_{\tilde{b},0}(t)),\\
\|\tilde{\varepsilon}_{\infty}(t)\|_{H^1}&\lesssim \left|t\right|^{\frac{\alpha K}{4-\alpha}},\quad \||y|\tilde{\varepsilon}_{\infty}(t)\|_2\lesssim \left|t\right|^{\frac{\alpha (K-1)}{4-\alpha}},&\left|\epsilon_{\tilde{\lambda},0}(t)\right|&\lesssim |t|^\frac{\alpha M}{4-\alpha},\quad \left|\epsilon_{\tilde{b},0}(t)\right|\lesssim |t|^{\frac{\alpha M}{4-\alpha}}.
\end{align*}
Consequently, we obtain that $u$ converges to the blow-up profile in $\Sigma^1$.

Finally, we check energy of $u_\infty$. Since
\[
E\left(u_n\right)-E\left(P_{\tilde{\lambda}_n,\tilde{b}_n,\tilde{\gamma}_n}\right)=\int_0^1\left\langle E'(P_{\tilde{\lambda}_n,\tilde{b}_n,\tilde{\gamma}_n}+\tau \tilde{\varepsilon}_{\tilde{\lambda}_n,\tilde{b}_n,\tilde{\gamma}_n}),\tilde{\varepsilon}_{\tilde{\lambda}_n,\tilde{b}_n,\tilde{\gamma}_n}\right\rangle d\tau
\]
and $E'(w)=-\Delta w-|w|^\frac{4}{N}w-\left(|x|^{-2\sigma}\star|w|^2\right)w$, we have
\[
E\left(u_n\right)-E\left(P_{\tilde{\lambda}_n,\tilde{b}_n,\tilde{\gamma}_n}\right)=O\left(\frac{1}{{\tilde{\lambda}_n}^2}\|\tilde{\varepsilon}_n\|_{H^1}\right)=O\left(|t|^\frac{\alpha K-4}{4-\alpha}\right).
\]
Similarly, we have
\[
E\left(u_\infty\right)-E\left(P_{\tilde{\lambda}_\infty,\tilde{b}_\infty,\tilde{\gamma}_\infty}\right)=O\left(\frac{1}{{\tilde{\lambda}_\infty}^2}\|\tilde{\varepsilon}_\infty\|_{H^1}\right)=O\left(|t|^\frac{\alpha K-4}{4-\alpha}\right).
\]
From the continuity of $E$, we have
\[
\lim_{n\rightarrow \infty}E\left(P_{\tilde{\lambda}_n,\tilde{b}_n,\tilde{\gamma}_n}\right)=E\left(P_{\tilde{\lambda}_\infty,\tilde{b}_\infty,\tilde{\gamma}_\infty}\right)
\]
and from the conservation of energy,
\[
E\left(u_n\right)=E\left(u_n(t_n)\right)=E\left(P_{\tilde{\lambda}_{1,n},\tilde{b}_{1,n},0}\right)=E_0.
\]
Therefore, we have
\[
E\left(u_\infty\right)=E_0+o_{t\nearrow0}(1)
\]
and since $E\left(u_\infty\right)$ is constant for $t$, $E\left(u_\infty\right)=E_0$.
\end{proof}

\section{Proof of Theorem \ref{theorem:NEMBS}}
In this section, we describe the proof of Theorem \ref{theorem:NEMBS}.

\label{ProofNEMBS}
\begin{proof}[Proof of Theorem \ref{theorem:NEMBS}]
We assume that $u$ is a critical-mass radial solution for \eqref{NLS} with $\pm=-$ and blows up at $T^*$. Let a sequence $(t_n)_{n\in\mathbb{N}}$ be such that $t_n\rightarrow T^*$ as $n\rightarrow T^*$ and define
\[
\lambda_n:=\frac{\|\nabla Q\|_2}{\|\nabla u(t_n)\|},\quad v_n(x):={\lambda_n}^\frac{N}{2}u(t_n,\lambda_n x).
\]
Then
\[
\|v_n\|_2=\|Q\|_2,\quad \|\nabla v_n\|_2=\|\nabla Q\|_2
\]
hold. Moreover,
\[
E_0:=E(u(t_n))\geq E_{\mathrm{crit}}(u(t_n))=\frac{E_{\mathrm{crit}}(v_n)}{{\lambda_n}^2}.
\]
Therefore, we obtain
\[
\limsup_{n\rightarrow\infty}E_{\mathrm{crit}}(v_n)\leq 0.
\]
From the standard concentration argument (see \cite{MRUPB,LMR}), there exist sequences $(x_n)_{n\in\mathbb{N}}\subset\mathbb{R}^N$ and $(\gamma_n)_{n\in\mathbb{N}}\subset\mathbb{R}$ such that
\[
v_n(\cdot-x_n)e^{i\gamma_n}\rightarrow Q\quad\mbox{in}\ H^1(\mathbb{R}^N)\quad(n\rightarrow\infty).
\]

Here, since
\begin{align*}
\int_{\mathbb{R}}\left(\frac{1}{|x|^{2\sigma}}\star|v_n(\cdot-x_n)|^2\right)(x)|v_n(x-x_n)|^2dx=\int_{\mathbb{R}}\left(\frac{1}{|x|^{2\sigma}}\star|v_n|^2\right)(x)|v_n(x)|^2dx,
\end{align*}
we obtain
\[
\int_{\mathbb{R}}\left(\frac{1}{|x|^{2\sigma}}\star|u(t_n)|^2\right)(x)|u(t_n,x)|^2dx=\frac{1}{\lambda_n^{2\sigma}}\int_{\mathbb{R}}\left(\frac{1}{|x|^{2\sigma}}\star|v_n(\cdot-x_n)|^2\right)(x)|v_n(x-x_n)|^2dx.
\]
Therefore, since $E_{\mathrm{crit}}(u)\geq 0$ and $v_n(\cdot-x_n)e^{i\gamma_n}\rightarrow Q$ in $H^1(\mathbb{R}^N)$,
\[
E_0=E(u(t_n))\geq \frac{1}{4\lambda_n^{2\sigma}}\int_{\mathbb{R}}\left(\frac{1}{|x|^{2\sigma}}\star|v_n(\cdot-x_n)|^2\right)(x)|v_n(x-x_n)|^2dx\rightarrow \infty\quad(n\rightarrow\infty).
\]
It is a contradiction.
\end{proof}

\end{document}